\documentclass[11pt,reqno]{amsart}
\usepackage{amsmath,amssymb,graphicx,mathrsfs,amsopn,stmaryrd,amsthm,color,bm,times}
\usepackage[colorlinks=true,citecolor=blue,linkcolor=blue,pagebackref]{hyperref}
\usepackage[final]{showlabels}
\usepackage[english]{babel}
\usepackage[mathscr]{euscript}
\usepackage{xcolor}
\usepackage{geometry}
\geometry{verbose,margin=2.5cm}
\linespread{1.1}

\let\emptyset\varnothing
\usepackage{tikz}    
\usetikzlibrary{arrows,matrix,decorations.markings,shapes.geometric}   
\usetikzlibrary{decorations.pathreplacing}

\numberwithin{equation}{section}
\newtheorem{thm}{Theorem}[section]
\newtheorem{prop}[thm]{Proposition}
\newtheorem{lem}[thm]{Lemma}
\newtheorem{cor}[thm]{Corollary}
\newtheorem{conj}[thm]{Conjecture}
\newtheorem*{opq}{Open Problem}
\theoremstyle{definition} 
\newtheorem{eg}[thm]{Example}

\theoremstyle{remark}
\newtheorem{rem}[thm]{Remark}

\newcommand{\beq}{\begin{equation}}
\newcommand{\eeq}{\end{equation}}
\newcommand{\be}{\begin{equation*}}
\newcommand{\ee}{\end{equation*}}
\newcommand{\bs}{\boldsymbol}

\newcommand{\C}{\mathbb{C}}

\newcommand{\Z}{\mathbb{Z}}

\newcommand{\mc}{\mathcal}


\newcommand{\g}{\mathfrak{g}}
\newcommand{\gl}{\mathfrak{gl}}
\newcommand{\h}{\mathfrak{h}}


\newcommand{\End}{\mathrm{End}}


\newcommand{\pa}{\partial}
\newcommand{\tl}{\tilde}
\newcommand{\gge}{\geqslant}
\newcommand{\lle}{\leqslant}
\newcommand{\la}{\lambda}
\newcommand{\La}{\Lambda}

\newcommand{\glMN}{\mathfrak{gl}_{m|n}}
\newcommand{\UglMN}{\mathrm{U}(\mathfrak{gl}_{m|n})}
\newcommand{\UqglMN}{\mathrm{U}_q(\mathfrak{gl}_{m|n})}
\newcommand{\UqglMNh}{\mathrm{U}_q(\widehat{\mathfrak{gl}}_{m|n})}

\newcommand{\YglMN}{\mathrm{Y}(\mathfrak{gl}_{m|n})}
\newcommand{\YglN}{\mathrm{Y}(\mathfrak{gl}_{N})}

\newcommand{\Yone}{\mathrm{Y}(\mathfrak{gl}_{1|1})}
\newcommand{\bmx}{\begin{pmatrix}}    
\newcommand{\emx}{\end{pmatrix}}   
\newcommand{\wt}{\widetilde}

\newcommand{\qedd}{\tag*{$\square$}}

\begin{document}
\pagestyle{myheadings}
\setcounter{page}{1}

\title[Gelfand-Tsetlin Bases]{Gelfand-Tsetlin bases of representations for super Yangian\\ and quantum affine superalgebra}

\author{Kang Lu}
\address{K.L.: Department of Mathematics, University of Denver, 
\newline
\strut\kern\parindent 2390 S. York St., Denver, CO 80208, USA}
\email{kang.lu@du.edu}

\maketitle

\begin{abstract} 
		We give explicit actions of Drinfeld generators on Gelfand-Tsetlin bases of super Yangian modules associated with skew Young diagrams. In particular, we give another proof that these representations are irreducible. We study irreducible tame $\Yone$-modules and show that a finite-dimensional irreducible $\Yone$-module is tame if and only if it is thin. We also give the analogous statements for quantum affine superalgebra of type A. 
		\medskip 
		
		\noindent
		{\bf Keywords:} Gelfand-Tsetlin bases, super Yangian, quantum affine superalgebra.  
\end{abstract}


\thispagestyle{empty}
\section{Introduction}	
Yangians and quantum affine algebras and their representations have been extensively studied since 1980. Many striking results are produced. Though super Yangian $\YglMN$ of general linear Lie superalgebra $\glMN$ was introduced in \cite{Naz} and its finite-dimensional irreducible modules were classified in \cite{Z95,Z96}, only a few works were done for $\YglMN$ and representations of $\YglMN$ are still far from being well understood. Continuing \cite{LM20}, we study further skew representations of super Yangian which were introduced in \cite{Che87} and intensively studied in \cite{NT98,NT98b,NT02,Naz04} for the even case. 

Inside of the super Yangian, there is a distinguished maximal commutative subalgebra $\mathrm A(\glMN)$ generated by the Cartan currents of $\YglMN$ which we call the {\it Gelfand-Tsetlin algebra}. We say that a finite-dimensional $\YglMN$-module $M$ is  {\it tame} if the action of the subalgebra $\mathrm A(\glMN)$ on $M$ is semi-simple. We call $M$ {\it thin} if $M$ is tame and the spectrum of $\mathrm A(\glMN)$ on $M$ is simple.

Skew representations are a certain family of finite-dimensional $\YglMN$-modules including evaluation covariant (polynomial) modules.  They have bases parameterized by Gelfand-Tsetlin patterns (or semi-standard Young tableaux of the associated skew Young diagrams) and hence are called {\it Gelfand-Tsetlin bases}. It turns out that these bases are indeed eigenbases of the Gelfand-Tsetlin algebra. Therefore, skew representations are tame. Moreover, the eigenvalues can be computed explicitly and it is not hard to see that skew representations are actually thin. According to \cite[Proposition 3.1]{You15}, the action of the non-Cartan currents of Drinfeld generators on an eigenvector of $\mathrm A(\glMN)$ in a thin module is essentially determined by the action of the first coefficients of the non-Cartan currents. Combining with \cite[Theorem 7]{SV10}, we give the matrix elements of each currents acting on skew representations with respect to Gelfand-Tsetlin bases. In particular, it describes explicitly the poles of the currents acting on skew representations. It would be interesting to determine the set of poles of the currents acting on an arbitrary finite-dimensional irreducible module, cf. \cite{GW20}. It is also interesting to generalize \cite{NT94,Mol94} to the super setting where the main obstacle is the absence of polynomial action of Drinfeld type currents.

As a corollary, we show that skew representations of $\YglMN$ are irreducible. Note that the irreducibility of skew representations is obtained in \cite[Theorem 4.9]{LM20} using the general fact that the Drinfeld functor maps an  finite-dimensional irreducible module of degenerate affine Hecke algebra to a finite-dimensional irreducible module of super Yangian, see \cite[Proposition 4.8]{LM20}. Here we provide another independent proof of the irreducibility of skew representations. Note that the irreducibility should also follow from the super analogue of the centralizer construction in \cite{MO00}. 

\medskip

The result of this paper is a step towards understanding tame modules of $\YglMN$. Let $t_{ij}(u)$ be the R-matrix presentation generating series of the super Yangian $\YglMN$, where $t_{ij}(u)$ are series in $u^{-1}$ with $\delta_{ij}$ as the constant term  and certain generators of $\YglMN$ as other coefficients, see Section \ref{sec rtt}. Given $\xi(u)\in 1+u^{-1}\C[[u^{-1}]]$, let $\C_{\xi(u)}$ be the one-dimensional module spanned by a nonzero vector $v$ satisfying $t_{ii}(u)v=\xi(u)v$ and $t_{ij}(u)v=0$ for $1\lle i\ne j\lle m+n$. For any $z\in \C$ and any skew Young diagram $\la/\mu$, denote by $L_z(\la/\mu)$ the skew representation corresponding to the skew Young diagram $\la/\mu$ with evaluation parameter $z$. It was conjectured in \cite{Che87} and classified in \cite{NT98} that all finite-dimensional irreducible tame modules of the (nonsuper) Yangian $\YglN$ are, up to isomorphism, of the form
\beq\label{eq:tensor-skew}
\C_{\xi(u)}\otimes L_{z_1}(\la_1/\mu_1)\otimes \cdots L_{z_k}(\la_k/\mu_k),
\eeq
where $\xi(u)\in 1+u^{-1}\C[[u^{-1}]]$, $k\in \Z_{\gge 0}$, and $z_i-z_j\notin \Z$ for all $1\lle i< j \lle k$. Here $\la_i/\mu_i$ is a skew Young diagram for each $1\lle i\lle k$. Hence skew representations are the elementary but also fundamental objects among tame modules in the even case which motivates the study in the supersymmetric setting.

We take the opportunity to list a few open problems about irreducible tame $\YglMN$-modules. First, it would be interesting to generalize the classification of irreducible tame modules to super case.

\begin{opq}
Classify all finite-dimensional irreducible tame $\YglMN$-modules.\qed
\end{opq}

One might suggest again that up to a one-dimensional module, finite-dimensional irreducible tame modules are given by tensor products of skew representations with evaluation parameters in distinct $\Z$-cosets. Note that skew representations are direct sums of covariant representations of $\glMN$ when restricted as $\glMN$-modules. It is not hard to see that $\YglMN$-modules of the form in \eqref{eq:tensor-skew} are thin and hence are tame. However, they do not cover all finite-dimensional irreducible tame $\YglMN$-modules as there are finite-dimensional irreducible tame modules that are not sub-quotients of tensor powers of evaluation vector representations. The simplest examples are 2-dimensional evaluation $\Yone$-modules of non-integral weights. Even if we consider only the finite-dimensional irreducible tames modules whose restrictions are direct sums of covariant representations of $\glMN$, there are still such finite-dimensional irreducible tame modules that are not of the form \eqref{eq:tensor-skew}. An example of such case will be given in Section \ref{sec:Yone}.

It was shown for $\YglN$ in \cite{NT98} and for quantum affine algebras of type B in \cite{BM17} that a finite-dimensional irreducible module is tame if and only if it is thin. We believe the same statement also holds for $\YglMN$. 

\begin{conj}\label{conj:main}
A finite-dimensional irreducible $\YglMN$-module is tame only if it is thin.\qed
\end{conj}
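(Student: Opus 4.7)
The plan is to adapt the strategy employed for $\YglN$ in [NT98] and for type $B$ quantum affine algebras in [BM17] to the super setting. Let $M$ be a finite-dimensional irreducible tame $\YglMN$-module, decomposed as $M = \bigoplus_{\chi} M_\chi$ into joint eigenspaces of $\mathrm{A}(\glMN)$; the goal is to show $\dim M_\chi \le 1$ for every joint eigencharacter $\chi$.

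First, I would describe the joint spectrum. Invoking the chain of sub-super-Yangians together with the centrality of quantum Berezinians, a Harish-Chandra-style argument should encode each $\chi$ as a tuple of rational functions in $u$ compatible with super Gelfand-Tsetlin data. Then, following the strategy of [NT98], I would show that each off-diagonal Drinfeld current $x_i^{\pm}(u)$ sends $M_\chi$ into at most one other eigenspace $M_{\chi_i^{\pm}}$, and that the induced map $M_\chi \to M_{\chi_i^{\pm}}$ factors as a rational function in $u$ times a fixed $u$-independent linear map. This would be the tame-module analogue of [Proposition 3.1, You15] and serves as the structural engine of the argument, reducing everything to the behavior of the first coefficients $x_{i,0}^{\pm}$.

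Next, I would proceed by induction on the rank $m+n$. Supposing $\dim M_\chi \ge 2$, pick linearly independent $v, w \in M_\chi$. For each even simple root $\alpha_i$, the sub-super-Yangian generated by $x_i^{\pm}(u)$ together with the diagonal currents is isomorphic to a copy of the classical $\mathrm{Y}(\mathfrak{gl}_2)$; applying the $\mathrm{Y}(\mathfrak{gl}_2)$ case of the conjecture (known from [NT98]) inside $M$ yields a proportionality constraint between $v$ and $w$ along the $\alpha_i$-direction. Aggregating these constraints across all even simple roots and invoking the PBW theorem for $\YglMN$ together with irreducibility of $M$ should force $w \in \C v$.

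The principal obstacle lies in the odd simple root, where the Drinfeld generator $x_{m,0}^{\pm}$ is nilpotent (squares to zero) and no $\mathfrak{sl}_2$-triple is available to supply the rigidity used above. The example promised in Section \ref{sec:Yone} indicates that genuinely new tame phenomena already arise for $\Yone$, so the odd-root step cannot be reduced to the even case by restriction. A workable strategy would be to introduce an auxiliary operator -- for instance, a higher coefficient of the quantum Berezinian, or an odd analogue of a Casimir element -- whose eigenvalues on $M_\chi$ distinguish $v$ from $w$; tameness of this operator, combined with the even-root constraints, would complete the induction. Carrying this out in the odd direction, and in particular controlling isotropic root interactions that have no classical analogue, is where the substantive difficulty of the super case resides.
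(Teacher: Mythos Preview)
The statement you are attempting is a conjecture, not a theorem: the paper does not prove it in general and explicitly leaves it open. What the paper does establish is the special case $m=n=1$ (Theorem~\ref{thm:thin-one}), and that proof follows a route entirely different from your outline.

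For $\Yone$ the paper exploits the explicit classification of finite-dimensional irreducible modules: up to a one-dimensional twist every such module is an irreducible tensor product $\bigotimes_{i=1}^k L(a_i,b_i)$ of two-dimensional evaluation modules with $a_i+b_j\ne 0$. The $q$-character is computed directly (Lemma~\ref{lem:char-1-1}), giving that $L(\bm\zeta)$ is thin if and only if the polynomial $\varphi$ in \eqref{eq last 9} is square-free (Corollary~\ref{cor last}). For tameness, the paper assumes $\varphi$ has a repeated root $-a_1$, picks a suitable $\ell$-weight vector $v$, sets $w=\tl t_{21}(-a_1)v$ and $w'=\tl t_{21}'(-a_1)v$, and uses the RTT relations \eqref{eq-last-1}, \eqref{eq-last-2} together with Lemma~\ref{lem last-3} to exhibit a nontrivial Jordan block for $t_{11}(u)$ on the span of $w,w'$. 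No induction on rank, no PBW argument, and no appeal to the structural proposition of [You15] enters.

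Your proposal, by contrast, is an outline for the general conjecture whose decisive step you yourself flag as missing: the odd simple root. Observe that for $m=n=1$ there are \emph{no} even simple roots, so your inductive scheme built on embedded copies of $\rY(\gl_2)$ is vacuous in precisely the one case the paper settles; the ``auxiliary operator'' you hope will rescue the odd direction is never specified. Until that operator is produced and shown to work, the proposal is not a proof. (A minor point: in the Yangian the currents $x_i^\pm(u)$ have no zeroth mode; the first coefficient is $x_{i,1}^\pm$, not $x_{i,0}^\pm$.)
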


One of our main results is to prove Conjecture \ref{conj:main} for the case $m=n=1$ in Section \ref{sec:Yone}.

\medskip

The paper is organized as follows. We recall the Gelfand-Tsetlin bases for covariant representations of $\glMN$ and prepare basic facts of super Yangian in Section \ref{sec:super yangian}. In Section \ref{sec main}, we give our main results for super Yangian and their proofs. We give analogous results for quantum affine superalgebra $\UqglMNh$ with generic $q$ in Section \ref{sec q}.

\medskip

{\bf Acknowledgments.} The author thanks E. Mukhin and V. Tarasov for stimulating discussions.

\section{Super Yangian $\YglMN$}\label{sec:super yangian}
\subsection{Lie superalgebra $\glMN$}\label{sec glmn}
Throughout the paper, we work over $\C$. A \emph{vector superspace} $W = W_{\bar 0}\oplus W_{\bar 1}$ is a $\Z_2$-graded vector space. We call elements of $W_{\bar 0}$ \emph{even} and elements of
$W_{\bar 1}$ \emph{odd}. We write $|w|\in\{\bar 0,\bar 1\}$ for the parity of a homogeneous element $w\in W$. Set $(-1)^{\bar 0}=1$ and $(-1)^{\bar 1}=-1$.

Fix $m,n\in \Z_{\gge 0}$. Set $I:=\{1,2,\dots,m+n-1\}$ and $\bar I:=\{1,2,\dots,m+n\}$. We also set $|i|=\bar 0$ for $1\lle i\lle m$ and $|i|=\bar 1$ for $m< i\lle m+n$. Define $s_i=(-1)^{|i|}$ for $i\in \bar I$.

The Lie superalgebra $\glMN$ is generated by elements $e_{ij}$, $i,j\in \bar I$, with the supercommutator relations
\[
[e_{ij},e_{kl}]=\delta_{jk}e_{il}-(-1)^{(|i|+|j|)(|k|+|l|)}\delta_{il}e_{kj},
\]
where the parity of $e_{ij}$ is $|i|+|j|$. Set $e_{i}:=e_{i,i+1}$ and $f_i:=e_{i+1,i}$ for $i\in I$. Denote by $\UglMN$ the universal enveloping superalgebra of $\glMN$.

The \emph{Cartan subalgebra $\h$} of $\glMN$ is spanned by $e_{ii}$, $i\in \bar I$. Let $\epsilon_i$, $i\in I$, be a basis of $\h^*$ (the dual space of $\h$) such that $\epsilon_i(e_{jj})=\delta_{ij}$. There is a bilinear form $(\ ,\ )$ on $\h^*$ given by $(\epsilon_i,\epsilon_j)=s_i\delta_{ij}$. Define the simple roots $\alpha_i:=\epsilon_i-\epsilon_{i+1}$, for $i\in I$.

Let $\la=(\la_1,\la_2,\dots,\la_{m+n})$ be a tuple of complex numbers. We call $\la$ a $\glMN$-weight. Denote $L(\la)$ the irreducible module of $\glMN$ generated by a nonzero vector $v$ satisfying the conditions 
\[
e_{ii}v=\la_i v,\quad e_{jk}v=0,
\]
for $i\in \bar I$ and $1\lle j< k\lle m+n$. 

Let $\mathcal V:=\C^{m|n}$ be the vector superspace with a basis $v_i$, $i\in \bar I$, such that $|v_i|=|i|$. Let $E_{ij}\in\End(\mathcal V)$ be the linear operators such that $E_{ij}v_k=\delta_{jk}v_i$. The map $\rho_{\mathcal V}:\glMN\to \End(\mathcal V),\ e_{ij}\mapsto E_{ij}$ defines a $\glMN$-module structure on $\mathcal V$. We call it the \emph{vector representation} of $\glMN$. The highest weight of $\mathcal V$ is the tuple $(1,0,\dots,0)$. 

We call $\la$ a \emph{covariant $\gl_{m|n}$-weight} if $\la$ satisfies: all $\la_1,\dots,\la_{m+n}$ are nonnegative integers and the number $l$ of nonzeo components among $\la_{m+1},\dots,\la_{m+n}$ does not exceed $\la_m$, see \cite{Ser85,BR87}; moreover, $\la_1\gge \dots\gge \la_{m}$ and $\la_{m+1}\gge \dots\gge \la_{m+n}$. 

We call $L(\la)$ a \emph{covariant module} if $\la$ is a covariant $\gl_{m|n}$-weight. Note that in this case $L(\la)$ is a submodule of $\mathcal V^{\otimes |\la|}$, where $|\la|=\sum_{i=1}^{m+n}\la_i$.

Fix $r\gge 0$. For $k\in \Z_{\gge 0}$, we set $k'=r+k$. 

Define similar notations for the Lie algebra $\gl_{r}:=\gl_{r|0}$ and the Lie superalgebra $\gl_{m'|n}$.

\subsection{Gelfand-Tsetlin tableaux}
We identify $\gl_{r}$ as a Lie subalgebra of $\gl_{m'|n}$ via the natural embeding $e_{ij}\mapsto e_{ij}$ and $\gl_{m|n}$ as a Lie subalgebra of $\gl_{m'|n}$ via the embeding $e_{ij}\mapsto e_{i'j'}$. It is clear that $\gl_{r}$ commutes with $\gl_{m|n}$. 

Let $\la=(\la_1,\dots,\la_{m'+n})$ be a covariant $\gl_{m'|n}$-weight and $\mu=(\mu_1,\dots,\mu_{r})$ a covariant $\gl_r$-weight. Let $L(\la)$ be the corresponding irreducible $\gl_{m'|n}$-module. Regard $L(\la)$ as a $\gl_r$-module. Let $L(\la/\mu)$ be the subspace of $L(\la)$ given by
\[
L(\la/\mu):=\{v\in L(\la)~|~e_{ii}v=\mu_i v, e_{jk}v=0, \text{ for }1\lle i\lle r, 1\lle j<k \lle r\}.
\]
Clearly, $L(\la/\mu)$ is a $\gl_{m|n}$-module and a $\mathrm U(\gl_{m'|n})^{\gl_r}$-module. 

Our main combinatorial device is an array of complex numbers 
 $\Lambda=(\lambda_{ij})$ presented in the following form:
 \begin{equation}\label{GT pattern}
\begin{array}{cccccccc}
   \la_{m'+n,1}   &  \cdots      & \la_{m'+n,m'} & \la_{m'+n,m'+1} & \cdots & \la_{m'+n,m'+n-1} & \la_{m'+n,m'+n} \\
  \la_{m'+n-1,1} &  \cdots     & \la_{m'+n-1,m'} & \la_{m'+n-1,m'+1} & \cdots & \la_{m'+n-1,m'+n-1} & \\
  \vdots    &  \vdots & \vdots & \vdots   & \reflectbox{$\ddots$}    \\
  \la_{r+1,1} &  \cdots  & \la_{r+1,r} & \la_{r+1,r+1}    \\
  \la_{r1} &  \cdots  & \la_{rr}                           
\end{array}
\end{equation}
 We call  $\Lambda$ a  \emph{Gelfand-Tsetlin tableau} (GT tableau for short).  Given $\Lambda=(\lambda_{ij})$, we set
\begin{equation}\label{eq:lki}
l_{ki}=\la_{k'i}+r-i+1, (1\lle i\lle m'); \quad l_{kj}=-\la_{k'j}+r+j-2m', (m'+1\lle j\lle k').
\end{equation}
A GT tableau is \emph{$\la/\mu$-admissible} if the following conditions are satisfied:
\begin{enumerate}
\item $\la_{m'+n,i}=\la_i$ and $\la_{rj}=\mu_j$ for $1\lle i\lle m'+n$ and $1\lle j\lle r$;
\item $\theta_{k-1,i}:=\la_{ki}-\la_{k-1,i}\in \{0,1\}$, $1\lle i\lle m'$, $m'+1\lle k\lle m'+n$;
\item $\la_{km'}\gge \#\{i: \la_{ki}>0, m' \lle i\lle k\}$, $m' +1\lle k\lle m'+n$;
\item if $\la_{m'+1,m'}=0$, then $\theta_{m'm'}=0$;
\item $\la_{ki}-\la_{k,i+1}\in \Z_{\gge 0}$, $1\lle i\lle m'-1$, $m'+1\lle k\lle m'+n$;
\item $\la_{k+1,i}-\la_{ki}\in\Z_{\gge 0}$ and $\la_{ki}-\la_{k+1,i+1}\in\Z_{\gge 0}$, $1\lle i\lle k\lle m'-1$ or $m'+1\lle i\lle k\lle m'+n-1$.
\end{enumerate}

We recall the following theorem from \cite{SV10}. Here we adopt the renormalized version from \cite[Theorem 6.1]{FSZ20}. Note that our $\La$ corresponds to $\La$ in \cite{FSZ20} with $\la_{ki}=\mu_i$ for $1\lle i\lle k\lle r$ as we consider the subspace of singular vectors of $\gl_r$-weight $\mu$ in $L(\la)$. 

\begin{thm}[{\cite[Theorem 7]{SV10}}]\label{thm:gt-formula}
The $\gl_{m|n}$-module $L(\la/\mu)$ admits a basis $\xi_{\La}$ parameterized by all $\la/\mu$-admissible GT tableaux $\La$. The actions of the generators of $\gl_{m|n}$
are given by the formulas
\begin{equation*}
e_{kk}\xi_{\La}=\Big(\sum_{i=1}^{k'}\la_{k'j}-\sum_{j=1}^{k'-1}\la_{k'-1,j}\Big)\xi_{\La},\quad 1\lle k\lle m+n;
\end{equation*}

\begin{equation*}
\noindent e_{k}\xi_{\La}=-\sum_{i=1}^{k'}\frac{\Pi_{j=1}^{k'+1}(l_{k+1,j}-l_{ki}) }
  {\Pi_{j\neq i,j=1}^{k'} (l_{kj}-l_{ki}) }\xi_{\La+\delta_{ki}},
\quad 1\lle k\lle m-1;
\end{equation*}

\begin{equation*}
f_{k}\xi_{\La}=\sum_{i=1}^{k'}\frac{\Pi_{j=1}^{k'-1}(l_{k-1,j}-l_{ki})}
{\Pi_{j\neq i,j=1}^{k'}(l_{kj}-l_{ki})}\xi_{\La-\delta_{ki}},
\quad  1\lle k\lle m-1;
\end{equation*}

\begin{equation*}
\begin{split}
e_{m}\xi_{\La}=&\sum_{i=1}^{m'}\theta_{m' i}(-1)^{i-1}(-1)^{\theta_{m' 1}+\ldots+\theta_{m',i-1}}\\& \times  \frac{\Pi_{1\lle j< i\lle m'} (l_{m j}-l_{m i}-1)}
  {\Pi_{1\lle i<j\lle m'} (l_{m j}-l_{m i})
    \Pi_{j\neq i,j=1}^{m'}(l_{m+1,j}-l_{m i}-1)}
    \xi_{\La+\delta_{m i}},
\end{split}
\end{equation*}

\begin{equation*}
\begin{split}
f_{m} \xi_{\La}=&\sum_{i=1}^{m'}(1-\theta_{m' i})(-1)^{i-1}(-1)^{\theta_{m' 1}+\ldots+\theta_{m',i-1}}
\\ & \times  \frac{(l_{mi}-l_{m+1,m'+1})\Pi_{ 1\lle i<j\lle m'} (l_{m j}-l_{m i}+1)\Pi_{j=1}^{m'-1}(l_{m-1,j}-l_{m i})}
  {\Pi_{1\lle j< i\lle m'} (l_{mj}-l_{mi})} \xi_{\La-\delta_{m i}},
\end{split}
\end{equation*}

and for $m+1\lle k\lle m+n-1$,

\begin{equation*}
\begin{split}
e_{k} \xi_{\La}=&\sum_{i=1}^{m'}\theta_{k'i}(-1)^{\vartheta_{k'i}}(1-\theta_{k'-1,i})\times
\prod_{j\neq i,j =1}^{m'}\left(\frac{l_{kj}-l_{ki}-1}{l_{k+1,j}-l_{ki}-1}\right)
\xi_{\La+\delta_{ki}}
\\
&-\sum_{i=m'+1}^{k'} 
\prod_{j=1}^{m'}\left(\frac{(l_{kj}-l_{ki})(l_{kj}-l_{ki}+1)}{(l_{k+1,j}-l_{ki})(l_{k-1,j}-l_{ki}+1)} \right)
\times
  \frac{\Pi_{j=m'+1}^{k'+1}(l_{k+1,j}-l_{ki})}
  {\Pi_{j\neq i,j=m'+1}^{k'} (l_{kj}-l_{ki})}\xi_{\La+\delta_{ki}} ,
\end{split}
\end{equation*}

\begin{equation*}
\begin{split}
f_{k}\xi_{\La}=&
\sum_{i=1}^{m'}\theta_{k'-1,i}(-1)^{\vartheta_{k'i}}(1-\theta_{k'i})
 \times\frac{\Pi_{j=m'+1}^{k'+1}(l_{k+1,j}-l_{ki})\Pi_{j=m'+1}^{k'-1}(l_{k-1,j}-l_{ki}+1)}
  {\Pi_{j=m'+1}^{k'} (l_{kj}-l_{ki})(l_{kj}-l_{ki}+1)}
\\
& \times \prod_{j\neq i,j=1}^{m'}\left(\frac{l_{kj}-l_{ki}+1}{l_{k-1,j}-l_{ki}+1}\right) \xi_{\La-\delta_{ki}} + \sum_{i=m'+1}^{k'}
\frac{\prod_{j=m'+1}^{k'-1}(l_{k-1,j}-l_{ki})}{\prod_{j\neq i,j =m'+1}^{k'}(l_{kj}-l_{ki})}
\xi_{\La-\delta_{ki}}.
\end{split}
\end{equation*}
Here $\vartheta_{k,i}=\theta_{k1}+\ldots+\theta_{k,i-1}+\theta_{k-1,i+1}+\ldots+\theta_{k-1,m'}$. The arrays $\La\pm \delta_{ki}$ are obtained from $\La$ by replacing $\la_{k'i}$ with $\la_{k'i}\pm1$.  We assume  that
$\xi_{\La}=0$ if the GT tableau $\La$ is not $\la/\mu$-admissible.\qed
\end{thm}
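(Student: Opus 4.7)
I would build the basis $\{\xi_\La\}$ inductively from the branching chain
\[
\gl_r\subset \gl_{r+1}\subset \cdots\subset \gl_{m'}\subset \gl_{m'|1}\subset \cdots \subset \gl_{m'|n},
\]
in which each row of $\La$ in \eqref{GT pattern} records the highest weight at the corresponding step: the top row is $\la$, the bottom row is $\mu$, and the $\gl_{m|n}$-module structure on $L(\la/\mu)$ comes from the fact that $\gl_{m|n}$ sits inside $\gl_{m'|n}$ as the centralizer of $\gl_r$. The purely even restrictions $\gl_s\downarrow\gl_{s-1}$ are governed by the classical interlacing rule (condition (6)), whereas the super restrictions $\gl_{m'|k}\downarrow\gl_{m'|k-1}$ are governed by the covariant branching rule of \cite{BR87, Ser85}, whose combinatorial content is exactly (2)--(5); conditions (3)--(4) encode the atypicality constraint, which has no even analogue. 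Both rules are multiplicity-free on covariant modules, so an admissible $\La$ specifies $\xi_\La$ up to a scalar. I would fix scalars by the standard construction using extremal projectors in the Mickelsson--Zhelobenko algebra of the pair $(\gl_{m'|k},\gl_{m'|k-1})$, which yields a canonical vector well-suited to later calculations.

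\textbf{Deriving the matrix coefficients.} The action of $e_{kk}$ is immediate from the row-weights at step $k'$. For the Chevalley generators $e_k,f_k$, note that $e_k$ and $f_k$ live in the centralizer-like position between $\gl_{m'|k-1}$ and $\gl_{m'|k+1}$, so in the multiplicity-free isotypic decomposition a Wigner--Eckart-type argument reduces each matrix element to one scalar per edge of the branching graph. These scalars are then pinned down by the super relations
\[
[e_k,f_l]=\delta_{kl}h_k,\qquad [h_k,e_l]=a_{kl}e_l,\qquad [e_k,e_l]=0 \text{ for } |k-l|>1,
\]
together with the (super-)Serre relations. For $1\lle k\lle m-1$ this recovers the classical Gelfand--Tsetlin coefficients. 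For $k=m$ and $m+1\lle k\lle m+n-1$ the same procedure produces the products of $(l_{kj}-l_{ki}\pm 1)$-factors and the parity sign $(-1)^{\vartheta_{ki}}$; the sign is forced by pushing the odd generator past the odd positions in the iterative construction of $\xi_\La$, so the exponent $\vartheta_{ki}$ is precisely the parity count contributed by the $\theta_{k'j}$ to the left of position $i$ in row $k'$ and by the $\theta_{k'-1,j}$ to the right of it.

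\textbf{Verification and the main obstacle.} I would close by induction on $n$: the base $n=0$ is the classical Gelfand--Tsetlin theorem for $\gl_{m'}$, and the inductive step reduces to verifying the isotropic Serre relation $e_m^2\xi_\La=0$, the super commutator $[e_m,f_m]\xi_\La=(e_{mm}-e_{m+1,m+1})\xi_\La$, and the vanishing $e_k\xi_\La=0$ whenever $\La+\delta_{ki}$ is not admissible. The main obstacle is the interaction between the odd reflection at $\alpha_m$ and atypicality: the formula for $e_m$ is asymmetric in $\theta_{m'i}$ and $1-\theta_{m'i}$, so $e_m^2\xi_\La=0$ becomes a nontrivial rational-function identity in the variables $l_{mi}$, and the boundary factor $(l_{mi}-l_{m+1,m'+1})$ appearing in the $f_m$-formula must provide precisely the compensation needed for inadmissible neighbors (those violating (3) or (4)) to drop out. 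Checking these identities, together with their analogues at $k=m+1,\dots,m+n-1$, is the technical heart of the proof; everything else follows the classical template.
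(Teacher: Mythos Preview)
The paper does not give its own proof of this theorem: it is quoted verbatim from \cite{SV10} (in the renormalized form of \cite[Theorem~6.1]{FSZ20}) and closed with a bare \qedsymbol. So there is nothing in the paper to compare your argument against.

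For what it is worth, your outline is a plausible route to an independent proof, but it is not the route taken in the cited source. In \cite{SV10} the authors do not use extremal projectors or the Mickelsson--Zhelobenko algebra; they write down the formulas and verify by direct computation that the defining relations of $\gl_{m|n}$ (the Cartan relations, the super commutator $[e_m,f_m]$, the Serre and super-Serre relations, and $e_m^2=f_m^2=0$) hold on the span of the $\xi_\La$, then identify the resulting module with the covariant $L(\la)$ by exhibiting the highest-weight vector. Your approach would \emph{derive} the coefficients rather than merely check them, which is conceptually cleaner but requires setting up the projector machinery in the super case; the direct-verification approach of \cite{SV10} is more elementary but computationally heavier, with the rational-function identities you flag (especially around the atypicality constraints (3)--(4) and the factor $l_{mi}-l_{m+1,m'+1}$) being exactly the crux there as well.
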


We use the shorthand notations $\mathscr E^\pm_{\La,ki}$ for the matrix elements involved in Theorem \ref{thm:gt-formula} as follows,
\beq\label{eq:matrix-elements}
e_k\xi_{\La}=\sum_{i=1}^{k'}\mathscr E^+_{\La,ki}\ \xi_{\La+\delta_{ki}},\quad 
f_k\xi_{\La}=\sum_{i=1}^{k'}\mathscr E^-_{\La,ki}\ \xi_{\La-\delta_{ki}}.
\eeq
Note that a different Gelfand-Tsetlin type basis for covariant representations of $\glMN$ is given in \cite[Theorem 4.18]{Mol10}.

\subsection{Super Yangian $\YglMN$}\label{sec rtt}
We recall the definition of super Yangian $\YglMN$ from \cite{Naz}, see also \cite{Gow07,Naz20} for some basic properties of $\YglMN$.

Let $\mc P\in \End(\mathcal V^{\otimes 2})$ be the $\Z_2$-graded flip operator,
\beq\label{eq:eijk}
\mathcal P=\sum_{i,j\in \bar I} s_jE_{ij}^{(1)} E_{ji}^{(2)},\quad \text{ where }\quad E_{ij}^{(1)}=E_{ij}\otimes 1,\quad E_{ij}^{(2)}=1\otimes E_{ij}.
\eeq
The super Yangian $\YglMN$ is the $\Z_2$-graded unital associative algebra with generators $\{t_{ij}^{(a)}\ |\ i,j\in \bar I, \, a\gge 1\}$ where the generators $t_{ij}^{(a)}$ have parities $|i|+|j|$. The defining relations of super Yangian $\YglMN$ are as follows. 

Define the Yang R-matrix $R(u)\in \End(\mathcal V^{\otimes 2})$ by $R(u)=1-\mathcal P/u$. Define the generating series $t_{ij}(u)\in \YglMN[[u^{-1}]]$ and $T_k(u)\in \YglMN[[u^{-1}]]\otimes \End(\mathcal V^{\otimes 2})$ by
\[
t_{ij}(u)=\delta_{ij}+\sum_{k=1}^\infty t_{ij}^{(k)}u^{-k},\quad T_k(u)=\sum_{i,j\in \bar I} (-1)^{|i||j|+|j|}E_{ij}^{(k)}\otimes t_{ij}(u),\quad k=1,2.
\]
Then defining relations of $\YglMN$ are written as
\beq\label{eq comm generators}
R(u_1-u_2)T_1(u_1)T_2(u_2)=T_2(u_2)T_1(u_1)R(u_1-u_2)\in \YglMN\otimes \End(\mathcal V^{\otimes 2}) [[u^{-1}]].
\eeq
In terms of generating series, defining relations \eqref{eq comm generators} are equivalent to
\beq\label{eq comm series}
(u_1-u_2)[t_{ij}(u_1),t_{kl}(u_2)]=(-1)^{|i||j|+|i||k|+|j||k|}(t_{kj}(u_1)t_{il}(u_2)-t_{kj}(u_2)t_{il}(u_1)).
\eeq
The super Yangian $\YglMN$ is a Hopf superalgebra with coproduct, antipode, counit given by
\beq\label{coproduct}
\Delta: t_{ij}(u)\mapsto \sum_{k\in \bar I} t_{ik}(u)\otimes t_{kj}(u),\qquad S: T(u)\mapsto T(u)^{-1},\qquad \varepsilon: T(u)\mapsto 1.
\eeq

For $z\in\C$, there exists an isomorphism of Hopf superalgebras,
\begin{align}
&\tau_z:\YglMN\to\YglMN, && t_{ij}(u)\mapsto t_{ij}(u-z).\label{eq tau z}
\end{align}
For any $\YglMN$-module $M$, denote by $M_z$ the $\YglMN$-module obtained by pulling back $M$ through the isomorphism $\tau_z$.

The universal enveloping superalgebra $\mathrm U(\glMN)$ is a subalgebra of $\YglMN$ via the embedding $e_{ij}\mapsto s_it_{ij}^{(1)}$. The left inverse of this embedding is the \emph{evaluation homomorphism} $\pi_{m|n}: \YglMN\to \UglMN$ given by
\beq\label{eq:evaluation-map}
\pi_{m|n}: t_{ij}(u)\mapsto \delta_{ij}+s_ie_{ij}u^{-1}.
\eeq
For any $\glMN$-module $M$, it is naturally a $\YglMN$-module obtained by pulling back $M$ through the evaluation homomorphism $\pi_{m|n}$. We denote the corresponding $\YglMN$-module by the same letter $M$ and call it an \emph{evaluation module}.

\subsection{Gauss decomposition and $\ell$-weights}\label{sec:Gauss}
The Gauss decomposition of $\YglMN$, see \cite{Gow07}, gives generating series
\[
e_{ij}(u)=\sum_{a\gge 1}e_{ij}^{(a)}u^{-a},\quad f_{ji}(u)=\sum_{a\gge 1}f_{ji}^{(a)}u^{-r},\quad d_k(u)=1+\sum_{a\gge 1}d_{k,a}u^{-a},
\]
where $1\lle i< j\lle m+n$ and $k\in \bar I$, such that
\begin{align*}
t_{ii}(u)&=d_i(u)+\sum_{k<i}f_{ik}(u)d_k(u)e_{ki}(u),\\
t_{ij}(u)&=d_i(u)e_{ij}(u)+\sum_{k<i}f_{ik}(u)d_k(u)e_{kj}(u),\\
t_{ji}(u)&=f_{ji}(u)d_i(u)+\sum_{k<i}f_{jk}(u)d_k(u)e_{ki}(u).
\end{align*}

For $i\in I$, let
\[
x_{i}^+(u)=\sum_{a\gge 1}x_{i,a}^{+}u^{-a}:=e_{i,i+1}(u),\quad x_{i}^-(u)=\sum_{a\gge 1}x_{i,a}^{-}u^{-a}:=f_{i+1,i}(u).
\]

The parities of $x_{i,r}^\pm$ are the same as that of $t_{i,i+1}^{(a)}$ while all $d_{k,a}$ are even. The super Yangian $\YglMN$ is generated by $x_{i,a}^{\pm}$ and $d_{k,a}$, where $i\in I$, $k\in \bar I$, and $a\gge 1$. The full defining relations are described in \cite[Lemma 4 or Theorem 3]{Gow07}. Here we only need the following relations in $\YglMN[[u^{-1},v^{-1}]]$,
\begin{subequations}\label{eq:reltationdx}
\begin{align}
&[d_i(u),d_k(v)]=0,\\
&(u-v)[d_i(u),x_j^+(v)]=(s_i\delta_{ij}-s_i\delta_{i,j+1})d_i(u)(x_j^+(v)-x_j^+(u)),\label{eq:dx+}\\	
&(u-v)[d_i(u),x_j^-(v)]=(s_i\delta_{ij}-s_i\delta_{i,j+1})(x_j^-(u)-x_j^-(v))d_i(u),\\
&[x_{j}^\pm(u),x_l^\pm(v)]=0 \text{ for }|j-l|>1,
\end{align}
\end{subequations}
for $i,k\in\bar I$ and $j,l\in I$. 

We call the commutative subalgebra generated by coefficients of $d_i(u)$, for all $i\in \bar I$, the {\it Gelfand-Tsetlin algebra} and denote it by $\mathrm{A}(\glMN)$. It is not hard to see that $\mathrm{A}(\glMN)$ is maximal commutative in $\YglMN$. We do not need this fact for the present paper.

Let $\gamma_1=0$ if $m>0$ and $\gamma_1=-1$ if $m=0$. Define $\gamma_k=\gamma_{k-1}+(s_{k-1}+s_k)/2$ recursively for $2\lle k\lle m+n$.

\begin{lem}[{\cite{Gow05}}]\label{lem:central}
	The coefficients of the series $\prod_{j\in\bar I} \big(d_{j}(u-\gamma_j)\big)^{s_j}$ are central in $\YglMN$.\qed
\end{lem}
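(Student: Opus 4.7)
Since the Gauss decomposition recalled in Section~\ref{sec:Gauss} shows that $\YglMN$ is generated by the coefficients of the series $d_k(u)$, $k\in\bar I$, together with $x_i^\pm(u)$, $i\in I$, proving that $Z(u):=\prod_{j\in\bar I}\big(d_j(u-\gamma_j)\big)^{s_j}$ is central reduces to verifying that $Z(u)$ commutes with each of these generating series. Commutativity of $Z(u)$ with $d_j(v)$ is immediate from the first relation of \eqref{eq:reltationdx}, so the nontrivial content is
\begin{equation*}
[Z(u),\,x_k^+(v)]=0,\qquad [Z(u),\,x_k^-(v)]=0,\qquad k\in I.
\end{equation*}

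I would establish the first identity and argue the second by symmetry. By relation \eqref{eq:dx+}, the factor $d_j(u-\gamma_j)$ commutes with $x_k^+(v)$ whenever $j\notin\{k,k+1\}$. Since the $d_j$ commute pairwise, all such factors pull out of the commutator, and the problem reduces to showing
\begin{equation*}
\big[d_k(u-\gamma_k)^{s_k}\,d_{k+1}(u-\gamma_{k+1})^{s_{k+1}},\ x_k^+(v)\big]=0.
\end{equation*}
Setting $w_j=u-\gamma_j$, I would apply \eqref{eq:dx+} together with the identity $[A^{-1},B]=-A^{-1}[A,B]A^{-1}$ (valid for any invertible $A$) to produce closed-form expressions for $[d_k(w_k)^{s_k},x_k^+(v)]$ and $[d_{k+1}(w_{k+1})^{s_{k+1}},x_k^+(v)]$, each with a simple pole at $v=w_k$ or $v=w_{k+1}$ respectively. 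Combining the two contributions, the residues cancel precisely because the shifts satisfy $\gamma_{k+1}-\gamma_k=(s_k+s_{k+1})/2$, which is the defining recursion for the $\gamma_j$'s. What remains of the regular part is a telescoping combination of $x_k^+(w_k)$ and $x_k^+(w_{k+1})$ that vanishes identically.

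The case of $x_k^-(v)$ is handled in the same way using the third relation in \eqref{eq:reltationdx}. I expect the main obstacle to lie in the odd-isotropic case $s_k=-s_{k+1}$, where $\gamma_k=\gamma_{k+1}$ so that $w_k=w_{k+1}$ and the two single commutators sit at the same spectral parameter, leaving no natural partial-fraction separation. In this regime one must expand both single commutators carefully and exploit the sign difference $s_k=-s_{k+1}$ in the exponents to obtain the cancellation directly, rather than via matching residues at distinct poles. The underlying mechanism is unchanged, however: the precise choice of shifts $\gamma_j$ is exactly what makes $Z(u)$ central, and this is also what singles out the quantum Berezinian as the natural generating series of central elements in $\YglMN$.
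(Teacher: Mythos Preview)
The paper does not give its own proof of this lemma: it is stated with an immediate \qed and attributed to \cite{Gow05}, where it is derived from the quantum Berezinian via the RTT presentation. So there is no in-paper argument to compare against; your proposal is an independent direct verification using the Drinfeld-type relations \eqref{eq:reltationdx}.

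Your plan is sound and does go through, but your diagnosis of where the difficulty lies is inverted. Write $w=u-\gamma_k$. In the isotropic case $s_k=-s_{k+1}$ one has $\gamma_{k+1}=\gamma_k$, so both relevant factors sit at the \emph{same} spectral parameter $w$, and relations \eqref{eq:dx+} give
\[
[d_k(w),x_k^+(v)]=\frac{s_k}{w-v}\,d_k(w)\big(x_k^+(v)-x_k^+(w)\big),\qquad
[d_{k+1}(w),x_k^+(v)]=\frac{s_k}{w-v}\,d_{k+1}(w)\big(x_k^+(v)-x_k^+(w)\big),
\]
i.e.\ $d_k(w)$ and $d_{k+1}(w)$ satisfy the \emph{identical} commutation law with $x_k^+(v)$. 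Combining with $[A^{-1},C]=-A^{-1}[A,C]A^{-1}$ the two contributions to $[d_k(w)^{s_k}d_{k+1}(w)^{s_{k+1}},x_k^+(v)]$ cancel on the nose. This is the easy case, not the hard one.

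The case requiring actual work is $s_k=s_{k+1}$, where $w_{k+1}=w_k\mp 1$ and the two factors sit at distinct parameters. Here the cancellation is not a simple residue match: one must first commute the leftover $x_k^+(v)-x_k^+(w_k)$ (from the $d_k$ commutator) past $d_{k+1}(w_{k+1})$, which produces an extra term. The key identity that makes everything collapse is the specialization $x_k^+(w_k)\,d_{k+1}(w_k-1)=d_{k+1}(w_k-1)\,x_k^+(w_k-1)$ (and its analogue), obtained by setting $u=w_k-1$, $v=w_k$ in \eqref{eq:dx+}. After this substitution the two pieces combine to zero. So your ``telescoping of $x_k^+(w_k)$ and $x_k^+(w_{k+1})$'' is not an independent step that ``vanishes identically''; it is absorbed into the main cancellation via this intertwining identity. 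With that correction, the argument is complete and gives an alternative, purely Drinfeld-presentation proof to the RTT/Berezinian route of \cite{Gow05}.
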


Set $\mathcal B:=1+u^{-1}\C[[u^{-1}]]$ and $\mathfrak B:=\mathcal B^{\bar I}$. We call an element $\bm\zeta\in \mathfrak B$ an \emph{$\ell$-weight}. We write $\ell$-weights in the form $\bm\zeta=(\zeta_i(u))_{i\in \bar I}$, where $\zeta_i(u)\in \mathcal B$ for all $i\in \bar I$. 

Clearly $\mathfrak B$ is an abelian group with respect to the point-wise multiplication of the tuples. Let $\Z[\mathfrak B]$ be the group ring of $\mathfrak B$ whose elements are finite $\Z$-linear combinations of the form $\sum a_{\bm \zeta}[\bm\zeta]$, where $a_{\bm\zeta}\in \Z$.

Let $M$ be a $\YglMN$-module. We say that a nonzero vector $v\in M$ is \emph{of $\ell$-weight $\bm \zeta$} if $d_{i}(u)v=\zeta_i(u)v$ for $i\in \bar I$. We say that a vector $v\in M$ is a \emph{highest $\ell$-weight vector of $\ell$-weight $\bs\zeta$} if $v$ is of $\ell$-weight $\bs\zeta$ and $x_i^+(u)v=0$ for all $i\in I$. By the Gauss decomposition, one can deduce that $v$ is a highest $\ell$-weight vector of $\ell$-weight $\bs \zeta $ if and only if
\beq\label{eq:t-singular}
t_{ij}(u)v=0,\quad t_{kk}(u)v=\zeta_k(u)v,\quad 1\lle i<j\lle m+n,\ k\in \bar I.
\eeq

Let $M$ be a finite-dimensional $\YglMN$-module and $\bm\zeta\in \mathfrak B$ an $\ell$-weight. Let
\[
\zeta_i(u)=1+\sum_{j=1}^\infty \zeta_{i,j}u^{-j},\qquad \zeta_{i,j}\in \C.
\]
Denote by $M_{\bm \zeta}$ the \emph{generalized $\ell$-weight space} corresponding to the $\ell$-weight $\bm\zeta$,
\[
M_{\bm\zeta}:=\{v\in M~|~  (d_{i,j}-\zeta_{i,j})^{\dim M} v=0 \text{ for all }i\in \bar I, \ j\in \Z_{>0}\}.
\]
We call $M$ \emph{thin} if $\dim(M_{\bm\zeta})\lle 1$ for all $\bm\zeta\in\mathfrak B$. We call $M$ {\it tame} if the joint action of the Gelfand-Tsetlin algebra on $M$ is diagonalizable. In particular, if $M$ is thin, then $M$ is tame. 

For a finite-dimensional $\YglMN$-module $M$, define the $q$-{\it character} (or \emph{Gelfand-Tsetlin character}) of $M$ by the element 
\[
\chi(M):=\sum_{\bm\zeta\in \mathfrak B}\dim(M_{\bm \zeta})[\bm\zeta]\in \Z[\mathfrak B].
\]

Let $\mathcal C$ be the category of finite-dimensional $\YglMN$-modules. Let $\mathscr Rep(\mathcal C)$ be the Grothendieck ring of $\mathcal C$, then $\chi$ induces a $\Z$-linear map from $\mathscr Rep(\mathcal C)$ to $\Z[\mathfrak B]$.

\begin{lem}[{\cite[Lemma 2.8]{LM20}}]\label{lem chi morphism}
The map $\chi:\mathscr Rep(\mathcal C)\to \Z[\mathfrak B]$ is a ring homomorphism. \qed
\end{lem}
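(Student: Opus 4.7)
The plan is to split the proof into additivity and multiplicativity. Additivity is immediate: if $0 \to M' \to M \to M'' \to 0$ is a short exact sequence of finite-dimensional $\YglMN$-modules, then for every $\bm\zeta\in\mathfrak B$ the generalized $\ell$-weight space functor $(-)_{\bm\zeta}$ is exact (over $\C$, taking generalized eigenspaces of a family of commuting operators is exact on finite-dimensional modules), so $\dim M_{\bm\zeta}=\dim M'_{\bm\zeta}+\dim M''_{\bm\zeta}$. Hence $\chi(M)=\chi(M')+\chi(M'')$ and $\chi$ descends to a well-defined $\Z$-linear map $\mathscr Rep(\mathcal C)\to \Z[\mathfrak B]$.

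For multiplicativity $\chi(M\otimes N)=\chi(M)\chi(N)$, the strategy is to produce a filtration of $M\otimes N$ whose associated graded is $\bigoplus_{\bm\zeta',\bm\zeta''} M_{\bm\zeta'}\otimes N_{\bm\zeta''}$ with Gelfand-Tsetlin algebra acting via $\bm\zeta'\bm\zeta''$ on the corresponding summand. The filtration is by $\glMN$-weight of the first tensor factor: fix a total order on $\h^*$ refining the natural partial order by positive roots, and for each weight $\mu$ let $F_\mu(M\otimes N)=\bigoplus_{\mu'\lle\mu} M_{\mu'}\otimes N$, where $M_{\mu'}$ denotes the $\glMN$-weight space. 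The coproduct $\Delta(t_{ii}(u))=\sum_k t_{ik}(u)\otimes t_{ki}(u)$ combined with the fact that $t_{ik}^{(a)}$ shifts $\glMN$-weight by $\epsilon_i-\epsilon_k$ shows that only the $k=i$ summand preserves the filtration, while the others strictly increase $\mu$; thus on the associated graded, $\Delta(t_{ii}(u))$ acts as $t_{ii}(u)\otimes t_{ii}(u)$. Translating this via the Gauss decomposition $t_{ii}(u)=d_i(u)+\sum_{k<i}f_{ik}(u)d_k(u)e_{ki}(u)$ and inducting on $i$, one obtains
\[
\Delta(d_i(u)) \equiv d_i(u)\otimes d_i(u) \pmod{\text{operators raising the first-factor weight}}.
\]

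Given this triangular form, for homogeneous $v\in M_{\bm\zeta'}$ with $\glMN$-weight $\mu$ and $w\in N_{\bm\zeta''}$ with $\glMN$-weight $\nu$, we have
\[
\Delta(d_i(u))(v\otimes w) - \zeta'_i(u)\zeta''_i(u)\cdot (v\otimes w) \in F_{>\mu}(M\otimes N).
\]
Since $M\otimes N$ is finite-dimensional (so the filtration terminates) and the Gelfand-Tsetlin algebra is commutative, iterating shows that in $(M\otimes N)/F_{>\mu}$ the coset of $v\otimes w$ is a generalized eigenvector with $\ell$-weight $\bm\zeta'\bm\zeta''$. Consequently the associated graded of the filtration on $(M\otimes N)_{\bm\eta}$ is isomorphic as a vector space to $\bigoplus_{\bm\zeta'\bm\zeta''=\bm\eta} M_{\bm\zeta'}\otimes N_{\bm\zeta''}$, giving $\dim(M\otimes N)_{\bm\eta}=\sum_{\bm\zeta'\bm\zeta''=\bm\eta}\dim M_{\bm\zeta'}\cdot\dim N_{\bm\zeta''}$, which is exactly the $[\bm\eta]$-coefficient of $\chi(M)\chi(N)$.

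The main obstacle is verifying the triangularity $\Delta(d_i(u))\equiv d_i(u)\otimes d_i(u)$ modulo weight-raising operators in the super setting: one must keep careful track of the sign factors $(-1)^{|i||j|+|j|}$ entering the definition of $T_k(u)$ and the parities of the Gauss generators when rewriting the RTT coproduct in Drinfeld generators. This is handled by induction on $i$, starting from $d_1(u)=t_{11}(u)$ (where the claim is immediate) and using the explicit Gauss formulas to express $d_i(u)$ as the $(i,i)$-entry of the quasi-determinant, with the inductive hypothesis controlling the lower $d_k(u)$; the off-diagonal pieces in $\Delta(t_{ii}(u))$ contribute only to the $F_{>\mu}$ part by the $\glMN$-weight shift argument.
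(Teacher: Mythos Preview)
The paper does not prove this lemma; it is cited from \cite{LM20} with a \qed. So there is no in-paper argument to compare against, and your task reduces to giving a correct self-contained proof.

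Your additivity argument and the overall strategy for multiplicativity (triangularity of $\Delta(d_i(u))$ with respect to a weight filtration on the first tensor factor) are the right ideas, and the conclusion $\Delta(d_i(u))\equiv d_i(u)\otimes d_i(u)$ modulo operators strictly raising the first-factor weight is in fact true. However, your justification of that key triangularity has a genuine error. You claim that in $\Delta(t_{ii}(u))=\sum_k t_{ik}(u)\otimes t_{ki}(u)$ every $k\ne i$ summand ``strictly increases $\mu$'' in the first factor. This is false: $t_{ik}^{(a)}$ shifts first-factor weight by $\epsilon_i-\epsilon_k$, which is a \emph{negative} root when $k<i$. Thus for $i\gge 2$ the expression $\Delta(t_{ii}(u))-t_{ii}(u)\otimes t_{ii}(u)$ has both weight-raising and weight-lowering pieces in the first factor, so your filtration is not stable and the passage ``thus on the associated graded $\Delta(t_{ii})$ acts as $t_{ii}\otimes t_{ii}$'' breaks down. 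Your inductive step then inherits this gap: you invoke exactly the wrong statement (``the off-diagonal pieces in $\Delta(t_{ii}(u))$ contribute only to the $F_{>\mu}$ part'') to control the remainder.

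What actually happens is that upon passing from $t_{ii}(u)$ to $d_i(u)$ the weight-lowering contributions \emph{cancel}. One clean way to see this is via quantum minors: writing $A_k(u)$ for the $k\times k$ upper-left quantum determinant, one has $\Delta(A_k(u))=\sum_{|J|=k} A_{[k],J}(u)\otimes A_{J,[k]}(u)$, and for $J\ne\{1,\dots,k\}$ the first-factor minor $A_{[k],J}(u)$ has weight $\sum_{i\in[k]\setminus J}\epsilon_i-\sum_{j\in J\setminus[k]}\epsilon_j$, which is a nonzero sum of \emph{positive} roots since every $i\le k<j$. Hence $\Delta(A_k(u))-A_k(u)\otimes A_k(u)$ genuinely has only strictly positive first-factor weight, and since $d_k(u)$ is a ratio of consecutive $A_k$'s the same holds for $\Delta(d_k(u))-d_k(u)\otimes d_k(u)$. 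With this in hand, the rest of your filtration/generalized-eigenvector argument goes through. (Your filtration notation is also inconsistent: as written $F_\mu=\bigoplus_{\mu'\le\mu}$ but you then work modulo $F_{>\mu}$; you want the decreasing filtration $\bigoplus_{\mu'\ge\mu}$.)
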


\subsection{Skew representations }
%

Let $\psi_{r}:\YglMN\to \mathrm Y(\gl_{m'|n})$ be the embedding given by
\[
\psi_{r}: d_i(u)\mapsto d_{i'}(u),\quad x_j^\pm(u)\mapsto x_{j'}^\pm(u),
\]
see \cite[Lemma 2]{Gow07}. 

Regard $\mathrm{Y}(\gl_{r})$ as a subalgebra of $\mathrm Y(\gl_{m'|n})$ via the natural embedding $t_{ij}(u)\mapsto t_{ij}(u)$, for $1\lle i,j\lle r$. Clearly, the subalgebra $\mathrm{Y}(\gl_{r})$ of $\mathrm Y(\gl_{m'|n})$ supercommutes with the image of $\YglMN$ under the map $\psi_{r}$, see \eqref{eq:reltationdx}. Therefore, the image of the homomorphism $$\pi_{m'|n}\circ \psi_{r}:\YglMN\to \mathrm{U}(\gl_{m'|n})$$ supercommutes with the subalgebra $\mathrm{U}(\gl_{r})$ in $\mathrm{U}(\gl_{m'|n})$. This implies that the subspace $L(\la/\mu)$ is invariant under the action of the image of $\pi_{m'|n}\circ \psi_{r}$. Therefore, $L(\la/\mu)$ is a $\YglMN$-module. We call $L(\la/\mu)$ a {\it skew representation}, see \cite[Section 3]{LM20} for more detail. The $q$-character of $L(\la/\mu)$ is computed in terms of semi-standard Young tableaux, see \cite[Theorem 3.4]{LM20}. In the rest of this section, we recompute the $q$-character of $L(\la/\mu)$ in terms of GT tablaux.

Define the series $\mathscr A_k(u)$, for $0\lle k\lle m+n$, in $\mathrm{Y}(\gl_{m'|n})[[u^{-1}]]$, by
\[
\mathscr A_k(u):=\prod_{i=1}^{r}d_{i}(u+r-i+1)\prod_{j=1}^{k}\big(d_{j'}(u-\gamma_j)\big)^{s_j}.
\]
For a $\la/\mu$-admissible Gelfand-Tsetlin tableau $\La$, define rational functions $\mathscr Y_{\La,k}(u)$, for $0\lle k\lle m+n$, by
\beq\label{eq:y-A-ction}
\begin{split}
\mathscr Y_{\La,k}(u)&=\prod_{i=1}^{r}\Big(\frac{u+\la_{k'i}+r-i+1}{u+r-i+1}\Big)\prod_{j=1}^{k}\Big(\frac{u+s_j\la_{k'j'}-\gamma_j}{u-\gamma_j}\Big)^{s_j}\\
&=\prod_{i=1}^{r}\Big(\frac{u+l_{ki}}{u+r-i+1}\Big)\prod_{j=1}^{k}\Big(\frac{u+l_{kj'}}{u-\gamma_j}\Big)^{s_j}.
\end{split}
\eeq
\begin{lem}\label{lem:A-action}
We have $\mathscr A_k(u)\xi_{\La}=\mathscr Y_{\La,k}(u)\xi_{\La}$ for $0\lle k\lle m+n$.	
\end{lem}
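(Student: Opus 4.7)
The plan is to recognize $\mathscr A_k(u)$ as a shifted quantum Berezinian of a natural sub-super-Yangian of $\mathrm Y(\gl_{m'|n})$, invoke centrality to reduce its action on $\xi_\La$ to a scalar, and then evaluate that scalar using the action of Gauss decomposition series on the highest weight vector of an evaluation module.

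For each $0\lle k\lle m+n$, let $\mathcal Y_k\subset \mathrm Y(\gl_{m'|n})$ denote the subalgebra generated by $\{t_{ij}^{(a)}\mid 1\lle i,j\lle r+k,\ a\gge 1\}$. When $k\lle m$ this block is purely even, so $\mathcal Y_k\cong \mathrm Y(\gl_{r+k})$; when $k>m$ it is super, and $\mathcal Y_k\cong \mathrm Y(\gl_{m'|k-m})$. A direct comparison of the shifts $\gamma_j$ attached to $\gl_{m|n}$ with the analogous shifts and parities intrinsic to $\mathcal Y_k$ gives
\[
\mathscr A_k(u)\;=\;\mathrm{Ber}\bigl(T_{[r+k]}(u+r)\bigr),
\]
where $\mathrm{Ber}$ denotes the quantum Berezinian of $\mathcal Y_k$ (collapsing to the quantum determinant when $k\lle m$). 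Lemma \ref{lem:central} applied within $\mathcal Y_k$ then shows that $\mathscr A_k(u)$ lies in the center of $\mathcal Y_k$.

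Next I view $L(\la)$ as a $\mathcal Y_k$-module through $\pi_{m'|n}$. The restriction $\pi_{m'|n}|_{\mathcal Y_k}$ factors through the evaluation homomorphism of $\mathcal Y_k$ into $\mathrm U(\gl_{r+k})$ (for $k\lle m$) or $\mathrm U(\gl_{m'|k-m})$ (for $k>m$), each naturally sitting inside $\mathrm U(\gl_{m'|n})$. As a module over this sub-super-Lie-algebra, $L(\la)$ decomposes into a direct sum of covariant irreducibles, and the iterated branching construction underlying the GT basis of Theorem \ref{thm:gt-formula}---encoded in the admissibility conditions (1)--(6)---places $\xi_\La$ inside the irreducible summand with highest weight $(\la_{r+k,1},\dots,\la_{r+k,r+k})$. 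By centrality, $\mathscr A_k(u)$ therefore acts on $\xi_\La$ by its scalar eigenvalue on the highest weight vector of this summand.

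Finally, using the standard identity $d_i(v)v_\nu=\tfrac{v+s_i\nu_i}{v}v_\nu$ valid on the highest weight vector of any evaluation module of $\mathcal Y_k$, I expand $\mathrm{Ber}(T_{[r+k]}(u+r))v_{\nu}$ with $\nu=(\la_{r+k,1},\dots,\la_{r+k,r+k})$ as a product of such factors for $i=1,\dots,r+k$, substitute the definitions of $l_{ki}$ from \eqref{eq:lki} and of $\gamma_j$, and verify that the outcome matches $\mathscr Y_{\La,k}(u)$ factor-by-factor (separating the even indices $i\lle m'$ from the odd ones $i>m'$). The main obstacle is the identification in the super regime $k>m$ of the particular $\mathcal Y_k$-irreducible containing $\xi_\La$; this rests on the branching rules for covariant $\gl_{m'|k-m}$-modules, which are exactly what the admissibility conditions (2), (3), and (4) record.
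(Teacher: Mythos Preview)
Your proposal is correct and follows essentially the same approach as the paper's proof, which simply cites \cite[Lemma~2.1]{NT98} and \cite[Lemma~4.7]{FM02} together with Lemma~\ref{lem:central}. You have unpacked precisely what those references do: identify $\mathscr A_k(u)$ with the (shifted) quantum determinant/Berezinian of the natural sub-(super-)Yangian $\mathcal Y_k\cong\mathrm Y(\gl_{r+k})$ or $\mathrm Y(\gl_{m'|k-m})$, use its centrality to reduce to the highest weight vector of the $\mathcal Y_k$-irreducible summand containing $\xi_\La$, and then compute that scalar via the Gauss decomposition formula on evaluation highest weight vectors.
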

\begin{proof}
The proof is similar to \cite[Lemma 2.1]{NT98} and \cite[Lemma 4.7]{FM02} using Lemma \ref{lem:central}.
\end{proof}

Define rational functions $\zeta_{\La,k}(u)$, for $1\lle k \lle m+n$, by
\beq\label{eq:l-weight}
\zeta_{\La,k}(u)=\left(\frac{\mathscr Y_{\La,k}(u+\gamma_{k})}{\mathscr Y_{\La,k-1}(u+\gamma_{k})}\right)^{s_k}.
\eeq
and set $\bm{\zeta}_{\La}=(\zeta_{\La,1}(u),\dots,\zeta_{\La,m+n}(u))$.

\begin{lem}\label{lem:l-weight}
	The vector $\xi_{\La}\in L(\la/\mu)$ is of $\ell$-weight $\bm{\zeta}_{\La}$, namely $$d_k(u)\xi_{\La}=\zeta_{\La,k}(u)\xi_{\La},\qquad 1\lle k\lle m+n.$$
\end{lem}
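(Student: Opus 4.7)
The plan is to deduce this from the preceding Lemma on $\mathscr A_k(u)$ by taking the ``multiplicative discrete derivative'' in $k$. The starting observation is that the definition of $\mathscr A_k(u)$ is telescoping in $k$: for $1\lle k\lle m+n$ we have
\[
\frac{\mathscr A_k(u)}{\mathscr A_{k-1}(u)} \;=\; \bigl(d_{k'}(u-\gamma_k)\bigr)^{s_k}
\]
as an identity of invertible elements in $\mathrm Y(\gl_{m'|n})[[u^{-1}]]$ (invertibility is automatic since every $d_j(u)$ has constant term $1$). This ratio is well-defined precisely because coefficients of $\mathscr A_k(u)$ lie in the commutative Gelfand--Tsetlin algebra of $\mathrm Y(\gl_{m'|n})$.

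Next, I apply the previous Lemma to both numerator and denominator. Since $\xi_{\La}$ is a simultaneous eigenvector for all $\mathscr A_k(u)$, it is also an eigenvector for the ratio, with eigenvalue $\mathscr Y_{\La,k}(u)/\mathscr Y_{\La,k-1}(u)$. That is,
\[
\bigl(d_{k'}(u-\gamma_k)\bigr)^{s_k}\xi_{\La} \;=\; \frac{\mathscr Y_{\La,k}(u)}{\mathscr Y_{\La,k-1}(u)}\,\xi_{\La}.
\]
Because $s_k\in\{\pm 1\}$ and all these series lie in $1+u^{-1}\C[[u^{-1}]]$, I may raise both sides to the $s_k$-th power and then substitute $u\mapsto u+\gamma_k$ to obtain
\[
d_{k'}(u)\,\xi_{\La} \;=\; \left(\frac{\mathscr Y_{\La,k}(u+\gamma_k)}{\mathscr Y_{\La,k-1}(u+\gamma_k)}\right)^{\!s_k}\xi_{\La} \;=\; \zeta_{\La,k}(u)\,\xi_{\La}.
\]

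Finally, I translate back to the $\YglMN$-action on $L(\la/\mu)$. By construction of the skew representation, the $\YglMN$-module structure is obtained by pulling back through $\pi_{m'|n}\circ\psi_{r}$, and $\psi_{r}$ sends $d_k(u)\in\YglMN$ to $d_{k'}(u)\in\mathrm Y(\gl_{m'|n})$. Hence the computed action of $d_{k'}(u)$ on $\xi_{\La}$ is exactly the action of $d_k(u)$ as a $\YglMN$-element, giving the claim.

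There is no real analytic or combinatorial obstacle here: the only things to be careful about are (i) that inversion and $s_k$-th powers are performed within the ring $1+u^{-1}\C[[u^{-1}]]$ where they are unambiguous, and (ii) keeping the index shift $k\leftrightarrow k'=r+k$ straight when invoking $\psi_{r}$. All nontrivial content has already been absorbed into the preceding Lemma on the eigenvalues of $\mathscr A_k(u)$.
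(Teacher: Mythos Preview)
Your proof is correct and follows essentially the same approach as the paper: the paper's one-line proof records the identity $\psi_r(d_k(u))=(\mathscr A_k(u+\gamma_k)/\mathscr A_{k-1}(u+\gamma_k))^{s_k}$ (written there with the apparent typo $\varphi_{m'}$ for $\psi_r$) and invokes Lemma~\ref{lem:A-action}, which is exactly your telescoping computation plus the shift $u\mapsto u+\gamma_k$. Your version simply spells out the intermediate steps (invertibility in $1+u^{-1}\C[[u^{-1}]]$, the index shift $k\leftrightarrow k'$) that the paper leaves implicit.
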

\begin{proof}
Using the fact that 
\[
\varphi_{m'}(d_k(u))=(\mathscr A_k(u+\gamma_k)/\mathscr A_{k-1}(u+\gamma_k))^{s_k}, 
\]
the statement follows from Lemma \ref{lem:A-action}.
\end{proof}

Let $\beta=(\beta_1,\dots,\beta_{m+n})$ be a $\gl_{m|n}$-weight. Define a rational function $\mathscr Y_{\beta}(u)$ by
\[
\mathscr Y_{\beta}(u)=\prod_{i=1}^{m+n}\Big(\frac{u+s_i\beta_i-\gamma_i}{u-\gamma_i}\Big)^{s_i}.
\]
\begin{lem}\label{lem:for-thin}
Let $\beta$ and $\beta'$ be covariant $\gl_{m|n}$-weights. If $\mathscr Y_{\beta}(u)=\mathscr Y_{\beta'}(u)$, then $\beta=\beta'$.
\end{lem}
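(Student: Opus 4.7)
The plan is to identify $\mathscr Y_\beta(u)$ with a product over the cells of the natural $(m,n)$-hook partition attached to $\beta$, reducing uniqueness of $\beta$ to the standard fact that a partition is determined by its multiset of cell contents. Associate to a covariant $\gl_{m|n}$-weight $\beta$ the hook partition $\lambda(\beta)$ whose rows $1,\dots,m$ have lengths $\beta_1,\dots,\beta_m$ and whose columns $1,\dots,n$ in the tail below row $m$ have heights $\beta_{m+1},\dots,\beta_{m+n}$. The covariance bound that the number of nonzero $\beta_{m+j}$ does not exceed $\beta_m$ is exactly the condition $\lambda_m\ge\lambda_{m+1}$, so $\lambda(\beta)$ is a well-defined partition, and the map $\beta\mapsto\lambda(\beta)$ is manifestly injective.

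First I would establish the cell product formula
\[
\mathscr Y_\beta(u)=\prod_{(i,j)\in\lambda(\beta)}\frac{u+c(i,j)+1}{u+c(i,j)},\qquad c(i,j):=j-i,
\]
by telescoping row by row in the bosonic region and column by column in the fermionic tail. Concretely, row $i$ ($1\le i\le m$) telescopes to $(u+\beta_i-\gamma_i)/(u-\gamma_i)$ using $\gamma_i=i-1$, which is the $i$-th factor of $\mathscr Y_\beta$; and tail column $j$ telescopes to $(u-\gamma_{m+j})/(u-\beta_{m+j}-\gamma_{m+j})$ using $\gamma_{m+j}=m-j$, which is the $(m+j)$-th factor of $\mathscr Y_\beta$ raised to the fermionic sign $s_{m+j}=-1$.

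Next I would read off the diagonal length function $m_c(\lambda):=\#\{(i,j)\in\lambda:j-i=c\}$ from the logarithmic derivative
\[
\frac{\mathscr Y_\beta'(u)}{\mathscr Y_\beta(u)}=\sum_{c\in\Z}m_c(\lambda)\left(\frac{1}{u+c+1}-\frac{1}{u+c}\right),
\]
whose residue at $u=-k$ equals $m_{k-1}(\lambda)-m_k(\lambda)$ for every $k\in\Z$. The hypothesis $\mathscr Y_\beta=\mathscr Y_{\beta'}$ then gives $m_{k-1}(\lambda)-m_k(\lambda)=m_{k-1}(\lambda')-m_k(\lambda')$ for all $k$; since both sequences have finite support, $m_c(\lambda)=m_c(\lambda')$ for every $c$. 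Because $\lambda_i-i$ is strictly decreasing in $i$, the largest $c$ with $m_c(\lambda)>0$ equals $\lambda_1-1$, so subtracting row $1$'s diagonal contribution from $(m_c)$ and iterating uniquely recovers $\lambda$ from its diagonal lengths. Thus $\lambda(\beta)=\lambda(\beta')$, and injectivity of $\beta\mapsto\lambda(\beta)$ yields $\beta=\beta'$.

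The main point to verify is the telescoping in the first step, in particular matching the fermionic column products against the $s_i=-1$ exponent to reproduce the last $n$ factors of $\mathscr Y_\beta$. Once that identity is in hand, the remaining steps are a routine rational function comparison together with a standard partition-theoretic inversion.
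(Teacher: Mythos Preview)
Your proof is correct and takes a genuinely different route from the paper's. The paper argues directly on the rational function: it shows by a case analysis (splitting on where the last nonzero entry of $\beta$ lies) that when $\mathscr Y_\beta\neq 1$ the smallest zero of $\mathscr Y_\beta(u)$ is $-\beta_1$, then divides out the $i=1$ factor and observes that the residual is the $\mathscr Y$-function of the covariant $\gl_{m-1|n}$-weight $(\beta_2,\dots,\beta_{m+n})$, allowing an induction on $m+n$. You instead first rewrite $\mathscr Y_\beta$ as the content product $\prod_{(i,j)\in\lambda(\beta)}(u+c(i,j)+1)/(u+c(i,j))$ over the hook Young diagram $\lambda(\beta)$ (the telescoping you outline is exactly right, including the $m=0$ case via $\gamma_{m+j}=m-j$), then read off the diagonal multiplicities $m_c(\lambda)$ from the orders of $\mathscr Y_\beta$ at the integer points, and finally invoke the elementary fact that a partition is determined by its diagonal profile. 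Your approach avoids the case analysis and gives a cleaner combinatorial explanation; this is essentially the viewpoint the paper's own Remark immediately following the lemma alludes to, in terms of the content description from \cite{LM20}. The paper's approach, by contrast, is entirely self-contained within the rational-function language and does not need to set up the bijection $\beta\mapsto\lambda(\beta)$ or the cell-product identity.
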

\begin{proof}
If $mn=0$, then the sequence $(s_i\beta_i-\gamma_i)_{i=1}^{m+n}$ is either strictly increasing or strictly decreasing. The statement is hence clear. We assume that $mn>0$.

We claim that if $\mathscr Y_{\beta}(u)=1$, then $\beta=(0,\dots,0)$; if $\mathscr Y_{\beta}(u)\ne 1$, then the smallest root of $\mathscr Y_{\beta}(u)$ is $-\beta_1$. Indeed, let $a$ be the smallest nonnegative integer such that $\beta_{a+1}=0$, then $\beta_i=0$ for all $i>a$ and $$\mathscr Y_{\beta}(u)=\prod_{i=1}^{a}\Big(\frac{u+s_i\beta_i-\gamma_i}{u-\gamma_i}\Big)^{s_i}.
$$

(1) If $a=0$, it is clear that $\beta=(0,\dots,0)$ and $\mathscr Y_{\beta}(u)=1$.

(2) If $1\lle a\lle m$, then $(s_i\beta_i-\gamma_i)_{i=1}^a$ is strictly decreasing. Moreover, $s_1\beta_1-\gamma_1>-\gamma_i$ and $s_i=1$ for $1\lle i\lle a$. Hence $-\beta_1$ ($=\gamma_1-\beta_1$ as $\gamma_1=0$) is the smallest zero of $\mathscr Y_{\beta}(u)$.

(3) If $a>m$, then to show that the smallest root of $\mathscr Y_{\beta}(u)$ is $-\beta_1$ it reduces to show that $s_1\beta_1-\gamma_1>s_i\beta_{i}-\gamma_{i}$ and $s_1\beta_1-\gamma_1>-\gamma_{i}$ for $i>1$. This is clear for $1\lle i\lle m$, see part (2). For $m+1<i\lle a$, note that $(s_i\beta_i-\gamma_i)_{i=m+1}^a$ is strictly increasing, it suffices to check that $\beta_1>-\beta_a-\gamma_a$ and $\beta_1>-\gamma_a$. Because $\beta$ is covariant, we have $$\beta_1\gge \beta_m\gge \#\{\beta_j>0|j>m+1\}=a-m>a-2m=-\gamma_a.$$

Now the claim follows. In particular, $\mathscr Y_\beta(u)$ uniquely determines $\beta_1$ and the rational function
\[
\prod_{i=2}^{m+n}\Big(\frac{u+s_i\beta_i-\gamma_i}{u-\gamma_i}\Big)^{s_i}.
\]
Since $(\beta_2,\dots,\beta_{m+n})$ is also a covariant $\gl_{m-1|n}$-weight, the above rational function uniquely determines $\beta_2$. Repeating this procedure, we conclude that $\mathscr Y_\beta(u)$ determines $\beta$ uniquely.
\end{proof}

\begin{rem}
	The strategy of proof is clear in terms of \cite[Theorem 3.4 and Lemma 3.6]{LM20} since the smallest zero of $\mathscr Y_{\beta}(u)$ corresponds to the largest content of the hook Young diagram corresponding to weight $\beta$ which is always given by the last box of the first row (assuming $m>0$).\qed
\end{rem}	

\begin{cor}\label{cor:thin}
	The skew representation $L(\la/\mu)$ is thin.
\end{cor}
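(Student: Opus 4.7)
The plan is to show that the assignment $\La\mapsto\bm\zeta_\La$, from $\la/\mu$-admissible GT tableaux to $\ell$-weights, is injective. Combined with Lemma~\ref{lem:l-weight}, this forces distinct basis vectors $\xi_\La$ to lie in distinct $\ell$-weight spaces, so that every generalized $\ell$-weight space of $L(\la/\mu)$ has dimension at most one, which is the definition of thinness. Admissibility condition $(1)$ fixes $\la_{r,j}=\mu_j$, so $\mathscr Y_{\La,0}(u)$ is common to every admissible $\La$, and inverting \eqref{eq:l-weight} as
\[
\mathscr Y_{\La,k}(u)=\mathscr Y_{\La,k-1}(u)\,\zeta_{\La,k}(u-\gamma_k)^{s_k}
\]
recovers the whole sequence $\{\mathscr Y_{\La,k}(u)\}_{0\lle k\lle m+n}$ from $\bm\zeta_\La$. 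It therefore suffices to show that $\mathscr Y_{\La,k}(u)$ determines the $k'$-th row $\beta^{(k)}:=(\la_{k',1},\ldots,\la_{k',k'})$ of $\La$.

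The second step is to identify $\mathscr Y_{\La,k}(u)$, after the shift $v=u+r$, with the function $\mathscr Y_{\beta^{(k)}}(v)$ defined just before Lemma~\ref{lem:for-thin}, where $\beta^{(k)}$ is viewed as a weight of $\gl_{r+k}$ when $k\lle m$ and of $\gl_{m'|k-m}$ when $k>m$. This is a direct computation starting from \eqref{eq:y-A-ction}: split the two products by parity, substitute $v=u+r$, and match the resulting factors against the definition of $\mathscr Y_\beta$ for the relevant (super)algebra, using that $l_{ki}$ is precisely $r+s_i\la_{k',i}$ shifted by the Yangian parameter for $\gl_{m'|n}$. One then has to verify that $\beta^{(k)}$ is genuinely a covariant weight: the even-sector decrease $\la_{k',1}\gge\cdots\gge\la_{k',m'}$ and the odd-sector decrease $\la_{k',m'+1}\gge\cdots\gge\la_{k',k'}$ both follow by chaining the two interlacing inequalities of admissibility~$(6)$ through the row immediately above, with the base case provided by the covariance of the top row $\la$; the bound $\la_{k',m'}\gge\#\{i>m':\la_{k',i}>0\}$ is admissibility~$(3)$; and nonnegativity follows by interlacing down from the nonnegative top row.

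Once this identification is in place, Lemma~\ref{lem:for-thin} applied to $\gl_{r+k}$ or $\gl_{m'|k-m}$ as appropriate recovers $\beta^{(k)}$ uniquely from $\mathscr Y_{\La,k}(u)$, and running $k$ from $1$ to $m+n$ recovers all of $\La$. The main obstacle is the bookkeeping in the second step: the products in \eqref{eq:y-A-ction} use the constants $\gamma_j$ and $s_j$ associated to $\gl_{m|n}$, while $\mathscr Y_{\beta^{(k)}}$ uses those of the larger (super)algebra $\gl_{r+k}$ or $\gl_{m'|k-m}$, so the matching of numerators, denominators, and the flip $s_k=-1$ for $k>m$ must be tracked carefully across the even--odd split.
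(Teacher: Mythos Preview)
Your proposal is correct and follows essentially the same route as the paper: reduce injectivity of $\La\mapsto\bm\zeta_\La$ to injectivity of $\beta\mapsto\mathscr Y_\beta$ via Lemma~\ref{lem:for-thin}, after observing that $\mathscr Y_{\La,0}$ is fixed and that each row of an admissible tableau is a covariant weight for the appropriate $\gl_{k'}$ or $\gl_{m'|k-m}$. The paper's own proof is terser---it simply asserts that the rows are covariant and that the $\gamma_i$ must be adjusted---whereas you spell out the shift $v=u+r$, the verification of covariance from the admissibility conditions, and the bookkeeping with the parity constants; one minor correction is that for rows with $k'>m'$ the even-sector decrease $\la_{k',1}\gge\cdots\gge\la_{k',m'}$ comes directly from condition~(5), not from chaining~(6).
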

\begin{proof}
Let $\La_1$ and $\La_2$ be $\la/\mu$-admissible. We show that if $\zeta_{\La_1,k}=\zeta_{\La_2,k}$ for all $1\lle k\lle m+n$, then $\La_1=\La_2$. Note that  $\mathscr Y_{\La,0}$ is independent of $\La$, therefore we have $\mathscr Y_{\La_1,k}=\mathscr Y_{\La_2,k}$ for all $0\lle k\lle m+n$. Note that each row of a $\la/\mu$-admissible GT tableau corresponds to a covariant weight of a certain general Lie superalgebra. The statement follows from Lemma \ref{lem:for-thin} with suitable choices of $m$ and $n$. Here we only remark that $\gamma_i$ should change correspondingly with respect to each choice.\end{proof}

\section{Main results for super Yangian}\label{sec main}
\subsection{Main results}
Our main result is the explicit matrix elements of Drinfeld generating series $x_{i}^{\pm}(u)$ and $d_i(u)$ with respect to the basis $\xi_{\La}$ for all $\la/\mu$-admissible Gelfand-Tsetlin tableaux $\La$. 

Define the integers $s_{ki}^\pm$ for $1\lle k\lle m+n$ and $1\lle i\lle k'$ by
\beq\label{eq:ski}
s_{ki}^+=\begin{cases}
\frac{s_k+1}{2}, & \text{ if } i\lle m';\\
-1, & \text{ if } i>m',	
\end{cases} \qquad s_{ki}^-=\begin{cases}
\frac{s_k-1}{2}, & \text{ if } i\lle m';\\
0, & \text{ if } i>m'.	
\end{cases}
\eeq
\begin{thm}\label{thm:main}
We have 
\[
d_k(u)\xi_{\La}=\zeta_{\La,k}(u)\xi_{\La},
\]
\[
x_k^{+}(u)\xi_{\La}=s_k\sum_{i=1}^{k'}\mathscr E_{\La,ki}^{+}\frac{\xi_{\La+\delta_{ki}}}{u+l_{ki}+s_{ki}^++\gamma_k},
\]
\[
x_k^{-}(u)\xi_{\La}=s_{k+1}\sum_{i=1}^{k'}\mathscr E_{\La,ki}^{-}\frac{\xi_{\La-\delta_{ki}}}{u+l_{ki}+s_{ki}^-+\gamma_k},
\]
where $l_{ki}$, $\mathscr E_{\La,ki}^{\pm}$, and $\zeta_{\La,k}(u)$ are defined in \eqref{eq:lki}, \eqref{eq:matrix-elements}, and \eqref{eq:l-weight}, respectively.
\end{thm}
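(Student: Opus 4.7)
The formula for $d_k(u)$ is exactly Lemma \ref{lem:l-weight}, so the plan is to establish the formulas for $x_k^\pm(u)$. The approach combines three ingredients: thinness of $L(\la/\mu)$ (Corollary \ref{cor:thin}) forces a simple pole structure on the matrix entries; the leading Laurent coefficient of $x_k^\pm(u)$ is computable via the evaluation homomorphism and Theorem \ref{thm:gt-formula}; and the commutation relation \eqref{eq:dx+} between $d_i(u)$ and $x_k^+(v)$ pins down the pole locations.

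Concretely, I would write $x_k^+(v)\xi_\La = \sum_{\La'} a_{\La,\La'}(v)\,\xi_{\La'}$ and apply \eqref{eq:dx+} to $\xi_\La$. Since each $\xi_{\La'}$ is an $\mathrm A(\glMN)$-eigenvector with eigenvalue $\zeta_{\La',i}(u)$, extracting the $\xi_{\La'}$-coefficient yields the functional equation
$$(u-v)\, a_{\La,\La'}(v)\bigl(\zeta_{\La',i}(u)-\zeta_{\La,i}(u)\bigr)=s_i(\delta_{ik}-\delta_{i,k+1})\,\zeta_{\La',i}(u)\bigl(a_{\La,\La'}(v)-a_{\La,\La'}(u)\bigr).$$
This is the content of \cite[Proposition 3.1]{You15}: together with thinness, it forces $a_{\La,\La'}(v)=c/(v-v_0)$ with at most a simple pole; setting $i=k$ gives the relation $\zeta_{\La,k}(u)/\zeta_{\La',k}(u)=(u-v_0-s_k)/(u-v_0)$, so both $v_0$ and the nonvanishing of $a_{\La,\La'}$ are determined by the shape of this $\ell$-weight ratio, while $c$ is the leading Laurent coefficient of $a_{\La,\La'}(v)$ at infinity, i.e.\ the matrix entry of $x_{k,1}^+$.

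Next, I would compute $x_{k,1}^+$ directly. From the Gauss decomposition one has $x_{k,1}^+=t_{k,k+1}^{(1)}$; under the $\YglMN$-action $\pi_{m'|n}\circ\psi_r$ on $L(\la/\mu)$ this maps to $s_{k'}e_{k',k'+1}=s_k e_k$, where $e_k$ is a generator of $\gl_{m|n}\subset\gl_{m'|n}$. Theorem \ref{thm:gt-formula} then gives
$$x_{k,1}^+\xi_\La = s_k\sum_{i=1}^{k'}\mathscr E_{\La,ki}^+\,\xi_{\La+\delta_{ki}},$$
so $a_{\La,\La'}(v)\equiv 0$ unless $\La'=\La+\delta_{ki}$, and in that case $c=s_k\mathscr E_{\La,ki}^+$.

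It remains to extract $v_0$ from \eqref{eq:y-A-ction} and \eqref{eq:l-weight}. The shift $\la_{k'i}\to\la_{k'i}+1$ affects exactly one factor of $\mathscr Y_{\La,k}(v)$, producing the ratio $(v+l_{ki}+1)/(v+l_{ki})$ when $i\lle m'$ and $(v+l_{ki})/(v+l_{ki}-1)$ when $i>m'$; meanwhile $\mathscr Y_{\La,k-1}$ is unchanged. Raising to the $s_k$-power, shifting $v\to v+\gamma_k$, and matching $\zeta_{\La,k}/\zeta_{\La+\delta_{ki},k}$ against $(u-v_0-s_k)/(u-v_0)$ yields $v_0=-l_{ki}-s_{ki}^+-\gamma_k$ uniformly in each valid combination of $s_k\in\{\pm 1\}$ and parity of $i$; the two-case structure of \eqref{eq:ski} is precisely what packages these sign cases. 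The argument for $x_k^-(u)$ is strictly parallel, using $x_{k,1}^-=t_{k+1,k}^{(1)}\mapsto s_{k+1}f_k$ and the $x^-$-analog of \eqref{eq:dx+} in \eqref{eq:reltationdx}, which flips a sign in the functional equation and produces $s_{ki}^-$ in place of $s_{ki}^+$. The main delicacy I anticipate is exactly this bookkeeping in the last step: verifying the uniform formula $v_0=-l_{ki}-s_{ki}^{\pm}-\gamma_k$ across all relevant combinations of $s_k$ and the regime $i\lle m'$ vs.\ $i>m'$, where a miscomputation of $\gamma_k$, $l_{ki}$, or the $s_j$-power of the affected factor in $\mathscr Y_{\La,k}$ would immediately produce a wrong pole.
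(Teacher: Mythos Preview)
Your proposal is correct and follows essentially the same route as the paper: both apply the \cite{You15}-type rigidity result together with thinness (Corollary~\ref{cor:thin}) to reduce each matrix entry of $x_k^\pm(u)$ to a single simple pole $c/(v-v_0)$, read off $c$ from $x_{k,1}^\pm=t_{k,k+1}^{(1)},\,t_{k+1,k}^{(1)}$ via the evaluation map and Theorem~\ref{thm:gt-formula}, and extract $v_0$ from the ratio $\zeta_{\La\pm\delta_{ki},k}/\zeta_{\La,k}$. The only organizational difference is that the paper first states and proves the Yangian analogue of \cite[Proposition~3.1]{You15} as a standalone result (Proposition~\ref{prop:main}), whereas you derive the needed functional equation directly from \eqref{eq:dx+}; the content is the same.
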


\begin{thm}\label{thm:irr}
Every skew representation of $\YglMN$ is irreducible.	
\end{thm}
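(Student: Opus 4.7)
The plan is to deduce irreducibility from the explicit formulas in Theorem \ref{thm:main} combined with the thinness established in Corollary \ref{cor:thin}. Let $N$ be an arbitrary nonzero $\YglMN$-submodule of $L(\la/\mu)$. Because $L(\la/\mu)$ is thin with basis $\{\xi_{\La}\}$, and distinct admissible GT tableaux give distinct $\ell$-weights $\bm{\zeta}_{\La}$ (the content of Corollary \ref{cor:thin}), the joint eigenspaces of $\mathrm{A}(\glMN)$ are one-dimensional and spanned by the $\xi_{\La}$. Any submodule is stable under $\mathrm{A}(\glMN)$ and its eigenspace decomposition, so $N = \mathrm{span}\{\xi_{\La} : \La \in S\}$ for some nonempty set $S$ of $\la/\mu$-admissible tableaux. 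It therefore suffices to show that if $\xi_{\La} \in N$ and $\La \pm \delta_{ki}$ is $\la/\mu$-admissible, then $\xi_{\La\pm\delta_{ki}} \in N$, and that the resulting graph on admissible tableaux is connected.

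For the first claim I would use the residue-extraction trick. By Theorem \ref{thm:main},
\[
x_k^{+}(u)\xi_{\La} \;=\; s_k \sum_{i=1}^{k'} \frac{\mathscr E^{+}_{\La,ki}}{u+l_{ki}+s^{+}_{ki}+\gamma_k}\,\xi_{\La+\delta_{ki}},
\]
so expanding in $u^{-1}$ yields a Vandermonde system in the quantities $-(l_{ki}+s^{+}_{ki}+\gamma_k)$. The combinatorial definition of $l_{ki}$ in \eqref{eq:lki}, together with the small shifts $s^{+}_{ki}$ from \eqref{eq:ski}, makes these quantities pairwise distinct as $i$ ranges over $\{1,\dots,k'\}$ for any admissible $\La$ (this is the same distinctness observation already used implicitly in Corollary \ref{cor:thin}). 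Hence one can solve for each individual term, obtaining $\mathscr E^{+}_{\La,ki}\,\xi_{\La+\delta_{ki}} \in N$; analogously for $x_k^{-}(u)$. Thus $\xi_{\La\pm\delta_{ki}}\in N$ whenever the Shapovalov-type matrix element $\mathscr E^{\pm}_{\La,ki}$ is nonzero.

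It remains to show that the graph on $\la/\mu$-admissible tableaux, with edges $\La \leftrightarrow \La\pm\delta_{ki}$ whenever both endpoints are admissible and $\mathscr E^{\pm}_{\La,ki}\neq 0$, is connected. For $k\neq m$ the matrix elements in Theorem \ref{thm:gt-formula} are of classical GT form and are nonzero precisely when both endpoints are admissible (the denominators are automatically nonzero by the strict separation of the $l_{kj}$'s, and the numerators vanish only on the boundary of the admissibility region). The delicate point is $k=m$, where the factors $\theta_{m'i}$, $(1-\theta_{m'i})$ and $(1-\theta_{k'-1,i})$ in the formulas for $e_m, f_m, e_k, f_k$ can block a direct one-step move. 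I would handle this by fixing a distinguished reference tableau $\La^{0}$ (for instance, the one whose intermediate rows are the lexicographically minimal admissible choice) and showing by induction on the total weight $\sum_{k,i}\la_{k'i}$ that every admissible $\La$ can be connected to $\La^{0}$ through a sequence of moves that only uses edges of the first type or uses the $k=m$ edges in a configuration where the relevant $\theta$-factor is known to be $1$.

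The main obstacle is this last combinatorial step: verifying that the admissibility conditions, together with the vanishing pattern of $\theta$-factors at the odd simple root, do not disconnect the poset of admissible tableaux. I expect this to reduce to a tableau-combinatorial lemma in the spirit of the nonsuper treatment in \cite{NT98}, but adapted to the hook-shape constraints (iii)--(iv) of the admissibility conditions; once isolated, it should be provable by an explicit routing argument through tableaux adjacent to the boundary between rows $m'$ and $m'+1$.
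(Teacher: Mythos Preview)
Your overall architecture is exactly the paper's: thinness forces any submodule to be spanned by a subset of the $\xi_\La$, Theorem~\ref{thm:main} lets you move along edges $\La\leftrightarrow\La\pm\delta_{ki}$, and irreducibility reduces to (a) nonvanishing of $\mathscr E^{\pm}_{\La,ki}$ whenever both endpoints are admissible and (b) connectedness of the resulting graph. The paper isolates these as Lemma~\ref{lem:nonzero} and Lemma~\ref{lem:adm-transform}. Two remarks on where your write-up diverges.

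First, your worry about the $\theta$-factors at $k=m$ and $k>m$ is misplaced. Those factors are not obstructions between admissible tableaux; they \emph{encode} part of admissibility. For instance, in the $e_m$ formula the factor $\theta_{m'i}$ equals $\la_{m'+1,i}-\la_{m'i}$, and if it vanishes then $\La+\delta_{mi}$ already violates condition~(2), so that edge was never in your graph. Similarly $\theta_{k'i}(1-\theta_{k'-1,i})$ in the $e_k$ formula for $k>m$ vanishes exactly when $\La+\delta_{ki}$ fails~(2). So no routing workaround is needed: one must instead check that the \emph{remaining} product factors (numerators such as $\prod_j(l_{kj}-l_{ki}-1)$ and the fraction \eqref{eq:factor}) are nonzero whenever both endpoints are admissible. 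This is the actual content of Lemma~\ref{lem:nonzero}, and the paper handles it by a direct case-by-case check using the admissibility inequalities; your sketch asserts it for $k\neq m$ but does not supply the argument, and for $k\geqslant m$ the cancellation in \eqref{eq:cancel} is not entirely obvious.

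Second, for the extraction step the paper does not use a Vandermonde system on the poles. Since $L(\la/\mu)$ is thin, the vectors $\xi_{\La\pm\delta_{ki}}$ for varying $i$ lie in distinct one-dimensional $\ell$-weight spaces, so one simply applies the Gelfand--Tsetlin algebra to project $x_k^{\pm}(u)\xi_\La$ onto each component. Your Vandermonde argument also works (distinctness of the poles is equivalent, via $\bm\zeta_{\La\pm\delta_{ki}}=\bm\zeta_\La\cdot A_{k,a}^{\mp1}$, to distinctness of the target $\ell$-weights), but the $\ell$-weight projection is cleaner and avoids the separate pole-distinctness verification. Connectedness (your final paragraph) is dispatched in the paper by passing to semistandard Young tableaux of shape $\Gamma_{\la/\mu}$, where it becomes the elementary statement that any tableau can be reached from the highest-weight one by successively lowering entries.
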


\begin{thm}\label{thm:thin-one}
Conjecture \ref{conj:main} is true for $m=n=1$.	
\end{thm}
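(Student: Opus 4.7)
The plan is to analyse finite-dimensional irreducible $\Yone$-modules via the Drinfeld presentation, exploiting the fact that for $m=n=1$ the unique simple root is odd and isotropic. Since $s_1+s_2=0$, the Serre-type relations specialise to $\{x^+(u),x^+(v)\}=\{x^-(u),x^-(v)\}=0$, so every coefficient $x^\pm_a$ is nilpotent of order two and the families $(x^+_a)_{a\ge 1}$ and $(x^-_a)_{a\ge 1}$ mutually anticommute. The only defining relation coupling raising and lowering series is the anticommutator $\{x^+(u),x^-(v)\}$, whose right-hand side is a rational function in $u,v$ with coefficients in the Gelfand-Tsetlin algebra $\mathrm A(\gl_{1|1})$.

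Let $M$ be a finite-dimensional irreducible $\Yone$-module. I would first produce a highest $\ell$-weight generator $\xi\in M$ of some $\ell$-weight $\bs\zeta$ by choosing an extremal $\gl_{1|1}$-Cartan weight vector. Combined with $\{x^-(u),x^-(v)\}=0$ and finite-dimensionality, this forces $M$ to be a quotient of the exterior algebra on the coefficients of $x^-(u)$ acting on $\xi$; in particular $V:=\sum_a \C\, x^-_a\xi\subset M$ is finite-dimensional, and $M=\bigoplus_{k=0}^N M_k$, where $M_k$ is the $\gl_{1|1}$-weight space of weight $\lambda-k\alpha_1$. Here $M_0=\C\xi$, $M_1\subseteq V$, and each higher $M_k$ is spanned by wedge-type monomials $x^-_{a_1}\cdots x^-_{a_k}\xi$ with $a_1<\cdots<a_k$.

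Assuming $M$ is tame, I would prove by induction on $k$ that each $M_k$ has simple $\ell$-weight spectrum, which immediately gives thinness of $M$. The base case $k=0$ is clear. For the inductive step, note that $x^+(u)$ maps $M_k$ into $M_{k-1}$. Using the anticommutator relation above together with $x^+(u)\xi=0$ and the inductive hypothesis that $M_{k-1}$ is thin, one shows that for an $\ell$-weight vector $w\in M_k$ the series $x^+(u)w$ is determined up to scalar by the $\ell$-weight of $w$. Hence two $\ell$-weight vectors in $M_k$ sharing an $\ell$-weight differ, after renormalisation, by a vector in $\ker x^+(u)$; combined with $x^-(u)(\ker x^+(u)\cap M_k)\subseteq\ker x^+(u)$ (via anticommutation) and the top-bottom symmetry of the exterior structure, such a nonzero vector would generate a proper $\Yone$-submodule of $M$, contradicting irreducibility.

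The main obstacle is precisely this inductive step: a wedge monomial $x^-_{a_1}\cdots x^-_{a_k}\xi$ is not itself an $\ell$-weight vector, so one must simultaneously diagonalise $\mathrm A(\gl_{1|1})$ on $M_k$ while tracking how $x^+(u)$ couples it to $M_{k-1}$. The cleanest route I foresee is to work with generating series in the indices $a_i$, converting the inductive claim into an identity of rational functions on $\Lambda^k V$. Small cases with $\dim M\le 2$ should be disposed of separately, where the statement reduces to the observation that the two-dimensional evaluation $\Yone$-modules of non-integral weight are already thin.
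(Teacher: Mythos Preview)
Your proposal has a genuine gap at the heart of the inductive step. You assert that, assuming $M_{k-1}$ is thin, ``for an $\ell$-weight vector $w\in M_k$ the series $x^+(u)w$ is determined up to scalar by the $\ell$-weight of $w$''. What Proposition~\ref{prop:main} actually gives you (with $\dim V_{\bm\mu}=\dim V_{\bm\nu}=1$) is that for each target $\ell$-weight $\bm\nu=\bm\mu A_{1,a}$ the projection $(x^+(u)w)_{\bm\nu}$ equals $c_w(\bm\nu)/(u-a)$ times a fixed basis vector, with $a$ determined by $(\bm\mu,\bm\nu)$. If several distinct $\bm\nu$ occur, the \emph{ratios} $c_w(\bm\nu)/c_w(\bm\nu')$ depend on $w$, not only on $\bm\mu$; nothing in the anticommutator relation, in tameness, or in the thinness of $M_{k-1}$ forces $(c_{w_1}(\bm\nu))_{\bm\nu}$ and $(c_{w_2}(\bm\nu))_{\bm\nu}$ to be proportional for two $\ell$-weight vectors $w_1,w_2$ of the same $\ell$-weight. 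Your argument works for $k=1$ only because $M_0$ is one-dimensional, so there is a single target $\bm\nu$; this accident does not persist for $k\gge 2$. The ``generating series on $\Lambda^kV$'' idea you sketch at the end does not address this, and you yourself flag the step as the main obstacle without resolving it.

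A second, more local, error: the inclusion $x^-(u)\big(\ker x^+(u)\cap M_k\big)\subseteq\ker x^+(u)$ is false as stated, since $\{x^+(u),x^-(v)\}$ is a nonzero element of $\mathrm A(\gl_{1|1})$, not zero. The correct statement (which suffices for the contradiction you want) is that a singular vector $w'\in M_k$ with $k\gge 1$ generates a submodule contained in $\bigoplus_{j\gge k}M_j$; this follows because moving $x^+$ past each $x^-$ in a monomial $x^-_{a_1}\cdots x^-_{a_l}w'$ produces terms in $M_{k+l-1}$, never in $M_{k-1}$.

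The paper takes a completely different route: it first uses the classification of finite-dimensional irreducible $\Yone$-modules by a pair of coprime polynomials $\varphi,\psi$ with $\zeta_1/\zeta_2=\varphi/\psi$, computes the $q$-character explicitly (Lemma~\ref{lem:char-1-1}) to see that $L(\bm\zeta)$ is thin iff $\varphi$ has simple roots (Corollary~\ref{cor last}), and then proves directly that if $\varphi$ has a repeated root $-a_1$ then $t_{11}(u)$ is not semisimple. Concretely, one exhibits an $\ell$-weight vector $v$ with $\tl t_{11}(u)v=\wp(u)v$ where $\wp(-a_1)=\wp'(-a_1)=0$, and sets $w=\tl t_{21}(-a_1)v$, $w'=\tl t_{21}'(-a_1)v$; the relations \eqref{eq-last-1}, \eqref{eq-last-2} give a $2\times 2$ Jordan block of $\tl t_{11}(u)$ on $\mathrm{span}\{w,w'\}$, and Lemma~\ref{lem last-3} is used to check $w\ne 0$. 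This contrapositive argument avoids any induction on weight levels and pinpoints exactly where tameness fails.
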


We prove these theorems in the next three subsections. Note that Theorem 3.2 is obtained in \cite[Theorem 4.9]{LM20}. Here we give an independent proof using Theorem \ref{thm:main}.

\subsection{Proof of Theorem \ref{thm:main}}
We prepare the $\YglMN$ version of \cite[Proposition 3.1]{You15}. For each $i\in I$ and $a\in\C$, define the \emph{simple $\ell$-root} $A_{i,a}\in \mathfrak B$ by
\[
(A_{i,a})_j(u)=\frac{u-a}{u-a-(\alpha_i,\epsilon_j)},\qquad j\in \bar I.
\]

The following proposition established the property that all $x_i^\pm(u)$ acts on finite-dimensional representations of $\YglMN$ in a rather specific way.

\begin{prop}\label{prop:main}
Let $V$	be a finite-dimensional $\YglMN$-module. Pick and fix any $i\in I$. Let $(\bm \mu,\bm \nu)$ be a pair of $\ell$-weights of $V$ such that $x_{i,j}^{\pm}(V_{\bm\mu})\cap V_{\bm\nu}\ne \{0\}$ for some $j\gge 1$. Then:
\begin{enumerate}
\item $\bm \nu=\bm\mu A_{i,a}^{\pm 1}$ for some $a\in \C$,
\item there exist bases $(v_k)_{1\lle k\lle \dim(V_{\bm\mu})}$ of $V_{\bm\mu}$ and $(w_l)_{1\lle l\lle \dim(V_{\bm\nu})}$ of $V_{\bm\nu}$, and complex polynomials $P_{k,l}(z)$ of degree $\lle k+l -2$ such that
\[
(x_i^{\pm}(z)v_k)_{\bm \nu}=\sum_{l =1}^{\dim(V_{\bm \nu})} w_l P_{k,l}(\pa_z)\Big(\frac{1}{z-a}\Big),
\]
where $(x_i^\pm(z) v_k)_{\bm \nu}$ is the projection of $x_i^\pm(z) v_k$ onto the generalized $\ell$-weight subspace $V_{\bm \nu}$.
\end{enumerate}
\end{prop}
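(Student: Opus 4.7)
The plan is to mimic the proof of \cite[Proposition 3.1]{You15}, which relies only on the Drinfeld commutation relations \eqref{eq:reltationdx}; these take the same shape in the super setting, with the parities $s_i$ entering merely as scalar prefactors. I outline the $x_i^+$ case, the $x_i^-$ argument being symmetric (with $(A_{i,a})_j^{-1}$ in place of $(A_{i,a})_j$). Since $V$ is finite-dimensional, $x_i^+(z)$ acts on $V$ as an $\End(V)$-valued rational function of $z$; on each generalized $\ell$-weight space $V_{\bm\xi}$ the Cartan series $d_j(u)$ acts as $\xi_j(u)\,\id + N_j^{\bm\xi}(u)$, where the $N_j^{\bm\xi}(u)$ are nilpotent and pairwise commuting (by the first line of \eqref{eq:reltationdx}). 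I would choose ordered bases $(v_k)$ of $V_{\bm\mu}$ and $(w_l)$ of $V_{\bm\nu}$ that simultaneously upper-triangularize all these $d_j(u)$'s with strictly upper-triangular nilpotent parts, and set $F^+(z) := \pi_{V_{\bm\nu}} \circ x_i^+(z)|_{V_{\bm\mu}}$, viewed as a $\mathrm{Hom}(V_{\bm\mu}, V_{\bm\nu})$-valued rational function of $z$.

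Applying \eqref{eq:dx+} to $v \in V_{\bm\mu}$ and projecting onto $V_{\bm\nu}$ produces the functional identity
\[
\bigl\{(u-z)(\nu_j(u) - \mu_j(u)) - \alpha\,\nu_j(u)\bigr\}\, F^+(z)(v) \;=\; -\alpha\,\nu_j(u)\, F^+(u)(v) + R_j(u,z,v),
\]
where $\alpha := (\alpha_i, \epsilon_j)$ and $R_j(u,z,v)$ collects the contributions of $N_j^{\bm\mu}(u)$ and $N_j^{\bm\nu}(u)$. Restricting to the top graded piece of the two flags kills the nilpotent parts and gives a semisimple identity valid for all $u$; the pole in $z$ of $F^+(z)(v)$ then sits at $z = u - \alpha\,\nu_j(u)/(\nu_j(u) - \mu_j(u))$, and its $u$-independence (the LHS does not involve $u$) forces $\nu_j(u)/\mu_j(u) = (u-a)/(u-a-\alpha) = (A_{i,a})_j(u)$ for a common $a \in \C$ independent of $j$. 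This establishes part (1) and simultaneously realizes the top graded matrix entry of $F^+(z)$ as $C/(z-a)$, the $k=l=1$ case of part (2).

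For general $(k,l)$ I would induct on $k+l$ along the flags. At the inductive step the $(k,l)$-entry of $F^+(z)$ satisfies the same functional identity, but the remainder $R_j$ is now a linear combination of strictly lower-indexed entries $(k',l')$ with $k'+l' < k+l$, weighted by Taylor coefficients of $N_j^{\bm\mu}(u), N_j^{\bm\nu}(u)$ at $u=a$. By the inductive hypothesis those lower entries equal $P_{k',l'}(\partial_z)\bigl(1/(z-a)\bigr)$ with $\deg P_{k',l'} \le k'+l'-2$; substituting and solving for the $(k,l)$-entry raises the pole order at $z=a$ by at most one, yielding $P_{k,l}$ of degree at most $k+l-2$ and confirming that $z=a$ is the only pole. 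The main obstacle will be the delicate bookkeeping of the nilpotent remainder $R_j$ so that the pole order increments by exactly one per step and no spurious poles appear; this is possible precisely because the $d_j(u)$'s commute, which allows choosing simultaneously-compatible flags on $V_{\bm\mu}$ and $V_{\bm\nu}$.
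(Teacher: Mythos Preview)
Your overall strategy matches the paper's (and Young's): triangularize the $d_j(u)$-action on both generalized $\ell$-weight spaces, derive a functional identity from \eqref{eq:dx+}, read off part~(1) at the base of the flags, then induct on $k+l$ for part~(2). Two points need attention.

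First, you assert at the outset that $x_i^+(z)$ acts on $V$ as a rational function of $z$ and then argue by locating its pole. This is essentially what part~(2) is meant to establish, so as stated the reasoning is circular. The paper avoids this by treating the matrix coefficients $\lambda_{k,l}(z)$ purely as elements of $z^{-1}\C[[z^{-1}]]$: writing $\lambda_{k_0,l_0}(z)=a_m z^{-m}+a_{m+1}z^{-m-1}+\cdots$ with $a_m\neq 0$ and extracting the coefficient of $z^{-m}$ from the base-case identity \eqref{eq:proofprop} gives directly $\nu_j(u)/\mu_j(u)=(u-a)/(u-a-(\epsilon_j,\alpha_i))$ with $a=a_{m+1}/a_m$, without any appeal to rationality. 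Your pole heuristic can be made rigorous in exactly this way.

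Second, a bookkeeping issue: with the \emph{same} (upper-triangular) orientation on both spaces, the nilpotent remainder on the $V_{\bm\nu}$ side involves entries $\lambda_{k,l'}$ with $l'>l$, because you are taking the $w_l$-component of $d_j(u)$ applied to $\sum_m \lambda_{k,m}(z)w_m$. Hence the clean base case sits at $(k_{\min},l_{\max})$, not at the ``top'' of both flags, and the induction variable is not $k+l$. The paper instead takes $d_j(u)$ upper-triangular on $V_{\bm\mu}$ but \emph{lower}-triangular on $V_{\bm\nu}$, so that both nilpotent sums in \eqref{eq:wl} run over indices strictly below $k$ and $l$ respectively, and the induction on $k+l$ starts at $(1,1)$. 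Reversing one of your orderings fixes this.
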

\begin{proof}
The proof is very similar to that of \cite{You15}. We give it here for completeness. 

Let $(v_k)_{1\lle k\lle \dim(V_{\bm\mu})}$  be a basis of $V_{\bm\mu}$ such that all $d_{j,r}$ act upper-triangularly. More precisely, for all $j\in \bar I$ and $1\lle k\lle \dim(V_{\bm\mu})$, we have 
\[
(d_j(u)-\mu_j(u)) v_k= \sum_{k'<k}v_{k'}\xi_j^{k,k'}(u),
\]
where $\xi_j^{k,k'}(u)$ are certain elements in $u^{-1}\C[[u^{-1}]]$. Similarly, let $(w_l)_{1\lle l\lle \dim(V_{\bm\nu})}$ be a basis of $V_{\bm\nu}$ such that all $d_{j,r}$ act lower-triangularly, namely for all $j\in \bar I$ and $1\lle l\lle \dim(V_{\bm \nu})$,
\[
(d_j(u)-\nu_j(u)) w_l= \sum_{l'<l}w_{l'}\zeta_j^{l,l'}(u),
\]
for certain $\zeta_j^{l,l'}(u)\in u^{-1}\C[[u^{-1}]]$.

We only show that statement for the case of $x_i^+(z)$. The case of $x_i^-(z)$ is similar.

For all $1\lle k\lle \dim(V_{\bm \mu})$, there exist formal series $\la_{k,l}(z)\in z^{-1}\C[[z^{-1}]]$ for every $l$, $1\lle l\lle \dim(V_{\bm \nu})$, such that
\[
(x_i^+(z) v_k)_{\bm \nu}=\sum_{l =1}^{\dim (V_{\bm \nu})}\la_{k,l}(z)w_{l}.
\]

It follows from \eqref{eq:dx+} that
\begin{align*}
(u-z)x_i^+(z)(d_j(u)-\mu_j(u)) v_k=(u-z-(\epsilon_j,\alpha_i))&\, d_j(u)x_i^+(z)v_k\\+&\ (\epsilon_j,\alpha_i)d_j(u)x_i^+(u)v_k-(u-z)x_i^+(z)\mu_j(u)v_k.
\end{align*}
Projecting the equation to $V_{\bm \nu}$ and taking the $w_l$ component, we obtain
\beq\label{eq:wl}
\begin{split}
(u-z)\sum_{k'=1}^{k-1}\xi_{j}^{k,k'}(u)\la_{k',l}(z)=&(u-z-(\epsilon_j,\alpha_i))\Big(\la_{k,l}(z)\nu_j(u)+\sum_{l'=1}^{l-1}\la_{k,l'}(z)\zeta_j^{l',l}(u)\Big)\\ 
+(\epsilon_j,\alpha_i)&\Big(\la_{k,l}(u)\nu_j(u)+\sum_{l'=1}^{l-1}\la_{k,l'}(u)\zeta_j^{l',l}(u)\Big)-(u-z)\mu_j(u)\la_{k,l}(z).
\end{split}
\eeq
Since $(x_i^+(z) V_{\bm\mu})_{\bm \nu}\ne 0$, there exists a smallest $k_0$ such that $(x_i^+(z)v_{k_0})_{\bm \nu}\ne 0$ and hence a smallest $l_0$ such that $\la_{k_0,l_0}(z)\ne 0$. Then \eqref{eq:wl} implies
\beq\label{eq:proofprop}
0=(u-z-(\epsilon_j,\alpha_i))\la_{k_0,l_0}(z)\nu_j(u)+(\epsilon_j,\alpha_i)\la_{k_0,l_0}(u)\nu_j(u)-(u-z)\mu_j(u)\la_{k_0,l_0}(z),
\eeq
for all $j\in\bar I$. Let 
\[
\la_{k_0,l_0}(z)=a_mz^{-m}+a_{m+1}z^{-m-1}+\cdots,
\]
where $m\gge 1$ and $a_m\ne 0$. Considering the coefficients of $z^{-m}$ in \eqref{eq:proofprop}, we have
\[
(u-(\epsilon_j,\alpha_i))a_m\nu_j(u)-a_{m+1}\nu_j(u)-a_mu\mu_j(u)+a_{m+1}\mu_j(u)=0.
\]
Thus 
$$
\nu_j(u)=\mu_j(u)\frac{u-a}{u-a-(\epsilon_j,\alpha_i)}=\mu_j(u)(A_{i,a})_j(u),
$$
where $a=a_{m+1}/a_m$ and part (1) follows.

Using part (1), equation \eqref{eq:wl} becomes
\begin{equation}\label{eq:induct}
\begin{split}
\frac{(\epsilon_j,\alpha_i)\mu_j(u)((u-a)\la_{k,l}(u)-(z-a)\la_{k,l}(z))}{u-a-(\epsilon_j,\alpha_i)}&\\	= (u-z)\Big(\sum_{k'=1}^{k-1}\xi_j^{k,k'}(u)\la_{k',l}(z)&-\sum_{l'=1}^{l-1}\la_{k,l'}(z)\zeta_j^{l',l}(u)\Big)\\+(\epsilon_j,\alpha_i)&\Big(\sum_{l'=1}^{l-1}\la_{k,l'}(z)\zeta_j^{l',l}(u)-\sum_{l'=1}^{l-1}\la_{k,l'}(u)\zeta_j^{l',l}(u)\Big).
\end{split}
\end{equation}
We show part (2) by induction on $k+l$. For the base case $k+l=2$, it follows from \eqref{eq:induct} that \[(u-a)\la_{1,1}(u)-(z-a)\la_{1,1}(z)=0.\]Note that $\la_{1,1}(z)\in z^{-1}\C[[z^{-1}]]$, it follows from direct computations that 
$$\la_{1,1}(z)=p_{1,1}\sum_{k\gge 1}a^kz^{-k-1}=\frac{p_{1,1}}{z-a}$$
for some $p_{1,1}\in \C$ which shows the base case. Suppose part (2) holds for all $(k',l')$ such that $k'+l'<k+l$. The case of $(k,l)$ follows directly by considering the coefficient of $u^{-1}$ in \eqref{eq:induct} and the inductive data.
\end{proof}

Let us finish the proof of Theorem \ref{thm:main}. We apply Proposition \ref{prop:main} for $V=L(\la/\mu)$, $\bm \mu=\bm\zeta_{\La}$, and $\bm\nu=\bm\zeta_{\La\pm \delta_{ki}}$. Hence it suffices to find all polynomials $P_{k,l}$ and the corresponding $a$.

The numbers $a$ are computed directly since the expressions for $\bm\zeta_{\La}$ and $\bm\zeta_{\La\pm \delta_{ki}}$ are known explicitly. More precisely, we have
\[
(A_{k,a})_k(u)=\frac{u-a}{u-a-s_k}=\frac{\zeta_{\La+\delta_{ki},k}(u)}{\zeta_{\La,k}(u)}=\begin{cases}
\left(\dfrac{u+l_{ki}+1+\gamma_k}{u+l_{ki}+\gamma_k}\right)^{s_k},&\text{ if }1\lle i\lle m';	\\
\left(\dfrac{u+l_{ki}-1+\gamma_k}{u+l_{ki}+\gamma_k}\right)^{-s_k},&\text{ if } i> m';
\end{cases}
\]
\[
(A_{k,a}^{-1})_k(u)=\frac{u-a-s_k}{u-a}=\frac{\zeta_{\La-\delta_{ki},k}(u)}{\zeta_{\La,k}(u)}=\begin{cases}
\left(\dfrac{u+l_{ki}-1+\gamma_k}{u+l_{ki}+\gamma_k}\right)^{s_k},&\text{ if }1\lle i\lle m';	\\
\left(\dfrac{u+l_{ki}+1+\gamma_k}{u+l_{ki}+\gamma_k}\right)^{-s_k},&\text{ if } i> m';
\end{cases}
\]
It is straightforward that $a=-l_{ki}-s_{ki}^\pm-\gamma_k$.

By Corollary \ref{cor:thin}, we know that $k=1$ and $l=1$. Hence $P_{1,1}$ in this case is a constant. Moreover, the constant $P_{1,1}$ is determined by the coefficient of $\xi_{\La,\pm\delta_{ki}}$ in $x_{k,1}^\pm\xi_{\La}$. It is easy to see from the Gauss decomposition that $x_{k,1}^+=t_{k,k+1}^{(1)}$ and $x_{k,1}^-=t_{k+1,k}^{(1)}$. Hence under our identification of $\glMN$ as a subalgebra of $\mathfrak{gl}_{m'|n}$ and using the evaluation map, we see that the action of $x_{k,1}^+$ and $x_{k,1}^-$ corresponds to that of $s_ke_k$ and $s_{k+1}f_k$, respectively, completing the proof of Theorem \ref{thm:main}.

\subsection{Proof of Theorem \ref{thm:irr}}Before starting the proof, we prepare several lemmas.

For a covariant $\gl_{m|n}$-weight $\beta$, let $l$ be the number of nonzero components among $\beta_{m+1},\dots,\beta_{m+n}$. We have $l\lle \beta_m$. There is an associated Young diagram $\Gamma_\beta$ whose first $m$ rows are $\beta_1,\dots,\beta_{m}$ while the first $l$ columns are $\beta_{m+1}+m,\dots,\beta_{m+l}+m$. Moreover, $\Gamma_\beta$ has no other boxes outside of its first $m$ rows and first $l$ columns. Note that the condition that $l\leqslant \beta_m$ ensures that $\Gamma_{\beta}$ corresponds to a partition. We say that $\Gamma_\beta$ is the \emph{$(m|n)$-hook Young diagram} associated with $\beta$.

Let $\Gamma_\la$ and $\Gamma_\mu$ be the corresponding $(m'|n)$-hook and $(r|0)$-hook Young diagrams, respectively. If $L(\la/\mu)$ is nontrivial, we must have $\Gamma_\mu\subset \Gamma_\la$. Hence we assume further that $\Gamma_\mu\subset \Gamma_\la$. Let $\Gamma_{\la/\mu}$ be the skew Young diagram $\Gamma_\la/ \Gamma_\mu$.

A \textit{semi-standard Young tableau} of shape $\Gamma_{\la/\mu}$ is the skew Young diagram $\Gamma_{\la/\mu}$ with an element from $\{1,2,\dots,m+n\}$ inserted in each box such that the following conditions are satisfied:
\begin{enumerate}
    \item the numbers in boxes are weakly increasing along rows and columns;
    \item the numbers from $\{1,2,\dots,m\}$ are strictly increasing along columns;
    \item the numbers from $\{m+1,m+2,\dots,m+n\}$ are strictly increasing along rows.
\end{enumerate}

 We also need the bijection between $\la/\mu$-admissible GT tableaux and semi-standard Young tableaux of shape $\Gamma_{\la/\mu}$. Let $\La$ be a $\la/\mu$-admissible GT tableau. Let $\la^{(k)}=(\la_{k'1},\dots,\la_{k'k'})$, then $\la^{(k)}$ is a covariant $\g_k$-weight, where $\g_k=\gl_{k'}$ if $k\lle m$ and $\g_k=\gl_{m'|k-m}$ if $k>m$. We have a chain of Young diagrams
 \[
 \Gamma_{\la}=\Gamma_{\la^{(m+n)}}\supset \Gamma_{\la^{(m+n-1)}}\supset \cdots\supset \Gamma_{\la^{(k)}}\supset \cdots\supset \Gamma_{\la^{(1)}}\supset \Gamma_{\la^{(0)}}=\Gamma_\mu.
 \]
 One obtains a semistandard Young tableau $\Omega_\La$ by inserting the number $k$ to the skew Young subdiagram $\Gamma_{\la^{(k)}}/\Gamma_{\la^{(k-1)}}$ for $1\lle k\lle m+n$. It is well known that the map $\La\to \Omega_\La$ defines a bijection between $\la/\mu$-admissible GT tableaux and semi-standard Young tableaux of shape $\Gamma_{\la/\mu}$, see \cite{BR87,SV10}.
 
 Given a $\la/\mu$-admissible GT tableau $\La$, if $\La\pm\delta_{ki}$ is also admissible, then we call the transformation from $\La$ to $\La\pm\delta_{ki}$ an \emph{admissible transformation}. In terms of semistandard Young tableaux,  it means the semistandard Young tableau corresponding to $\La+\delta_{ki}$ (resp. $\La-\delta_{ki}$) is obtained from the semistandard Young tableau corresponding to $\La$ by replacing one $k+1$ with $k$ (resp. one $k$ with $k+1$).
 
\begin{lem}\label{lem:adm-transform}
Let $\La$ and $\La'$ be $\la/\mu$-admissible, then one can obtain $\La'$ from $\La$ by several admissible transformations.
\end{lem}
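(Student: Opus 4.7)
The plan is to translate the statement into a combinatorial fact about semistandard Young tableaux, via the bijection $\La\mapsto\Omega_\La$ recalled just before the lemma. Inspecting the admissibility conditions (2)--(6) against the semistandardness conditions shows that an admissible transformation $\La\to\La\pm\delta_{ki}$ corresponds precisely to changing, in $\Omega_\La$, a single entry: from $k+1$ to $k$ (for $+\delta_{ki}$), or from $k$ to $k+1$ (for $-\delta_{ki}$), in such a way that the result is again a valid SSYT of shape $\Gamma_{\la/\mu}$ respecting the $(m|n)$-hook conditions. The lemma is therefore equivalent to the connectedness of the graph on such SSYTs whose edges are these single-entry moves.

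I would prove this connectedness by induction on the alphabet size $m+n$. The base case $m+n=1$ is trivial since there is a unique SSYT. For the inductive step, I would show that any SSYT $T$ can be brought by single-entry moves into a canonical tableau $T^\star$ in which all occurrences of the maximal letter $m+n$ are placed in a fixed canonical location (for instance, as low and right as possible, subject to the relevant strictness condition). Two SSYTs both connected to $T^\star$ are then connected to each other; and once the cells containing $m+n$ match, removing them leaves SSYTs on a smaller skew shape with entries in $\{1,\ldots,m+n-1\}$, to which the inductive hypothesis applies.

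The crux of the inductive step is showing that a single letter $m+n$ can be migrated between any two legal positions via a sequence of single-entry moves using only the values $m+n$ and $m+n-1$. This splits into two cases according to whether $m+n>m$ (rows strict, columns weak for this entry) or $m+n\lle m$ (rows weak, columns strict); the second case is the classical connectivity statement for $\gl_N$ Gelfand--Tsetlin patterns and is handled by standard interchange arguments, while the first is its transpose analogue. The main obstacle, and the only place where the supersymmetric hypothesis genuinely intervenes, is verifying that the super-specific admissibility conditions (3) and (4) -- which bound $\la_{km'}$ by the number of strictly positive entries to its right and force $\theta_{m'm'}=0$ whenever $\la_{m'+1,m'}=0$ -- are preserved throughout each intermediate step. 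A finite case analysis at the hook corner $k\in\{m,m+1\}$ of the GT tableau should suffice to verify this, after which the induction closes.
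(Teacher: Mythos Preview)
Your proposal is correct and shares the same first move as the paper: translate via the bijection $\La\mapsto\Omega_\La$ to semistandard Young tableaux of shape $\Gamma_{\la/\mu}$, so that the lemma becomes connectivity of the single-entry-move graph on such tableaux. From that point the two arguments diverge in organization. The paper fixes the highest-weight tableau $\Omega^+$ as a hub and asserts that any SSYT can be transformed into $\Omega^+$ by processing columns from right to left and, within each column, from bottom to top; this is a one-sentence constructive recipe with no explicit induction and no discussion of the hook-corner conditions. Your route instead inducts on the alphabet size $m+n$, canonicalizing the positions of the maximal letter at each stage and recursing on the residual skew shape. Your approach is more formal and has the merit of isolating exactly where the super-specific admissibility conditions (3) and (4) need to be checked---a point the paper absorbs entirely into the phrase ``easily done''. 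The paper's approach is shorter and more direct, but your organization makes the potential obstruction at the hook corner visible rather than swept under the rug.
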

\begin{proof}
It is easy to explain the proof using semistandard Young tableaux instead of GT tableaux. It suffices to show that one can obtain all semistandard Young tableaux of shape $\Gamma_{\la/\mu}$ from the semistandard Young tableau $\Omega^+$corresponding to the highest weight appearing in $L(\la/\mu)$. This could be easily done by transforming the columns of $\Omega^+$ to the desired ones from right to left and from bottom to top.
\end{proof}

\begin{lem}\label{lem:nonzero}
If $\La$ and $\La\pm\delta_{ki}$ are $\la/\mu$-admissible, then $\mathscr E^\pm _{\La,ki}$ is nonzero.
\end{lem}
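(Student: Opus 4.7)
The plan is to verify Lemma \ref{lem:nonzero} by direct inspection of the explicit formulas for $\mathscr E^\pm_{\La,ki}$ recorded in Theorem \ref{thm:gt-formula}, organized as a case analysis matching the three regimes $1 \le k \le m-1$, $k = m$, and $m+1 \le k \le m+n-1$, together with the two signs. In each regime I have to check three things: (a) the overall combinatorial indicator factor (a product of $\theta$'s or $(1-\theta)$'s) equals $1$ under the admissibility hypothesis; (b) every factor in the numerator product is nonzero; and (c) every factor in the denominator product is nonzero, so that the rational expression is well-defined and its value is nonzero.

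For the classical range $1 \le k \le m-1$, the denominator $\prod_{j\ne i}(l_{kj}-l_{ki})$ is nonzero by strict monotonicity of $l_{k,\cdot}$, which follows from the partition condition (6) applied to row $k'$. The only potentially vanishing numerator factors in $\prod_j(l_{k+1,j}-l_{ki})$ are $l_{k+1,i}-l_{ki}$ and $l_{k+1,i+1}-l_{ki}$; admissibility of $\La+\delta_{ki}$ forces $\la_{k+1,i}>\la_{k',i}$, making the first factor $\ge 1$, while admissibility of $\La$ itself forces $\la_{k',i}\ge \la_{k+1,i+1}$, making the second $\le -1$. The $-$ case is symmetric.

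For $k=m$, the leading factor $\theta_{m',i}$ (resp.\ $1-\theta_{m',i}$) is exactly the indicator that condition (2) permits the increment (resp.\ decrement), so it equals $1$ under the hypothesis. Each potentially vanishing factor in the remaining product is then matched against an admissibility inequality: for example, $l_{m+1,i-1}-l_{m,i}-1=0$ would force $\la_{m'+1,i-1}=\la_{m',i}$, hence (by within-row monotonicity of row $m'+1$) $\theta_{m',i}=0$, contradicting admissibility of $\La+\delta_{mi}$; and $l_{m,i-1}-l_{m,i}-1=0$ would force $\la_{m',i-1}=\la_{m',i}$, which (since within-row monotonicity of row $m'$ is implicit from condition (6)) fails after the increment. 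All remaining factors are strictly bounded away from zero by chain telescoping, with the isolated case $\la_{m'+1,m'}=0$ covered by condition (4).

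The main obstacle is the odd range $m+1\le k\le m+n-1$, where the formula for $e_k$ (resp.\ $f_k$) splits into two sums indexed by $i\le m'$ and $m'<i\le k'$, each with intertwined products over the two disjoint index ranges. For a fixed admissible transition $\La\mapsto \La\pm\delta_{ki}$ only one summand is relevant, so the two ranges can be treated separately. In the $i\le m'$ range, the indicator $\theta_{k',i}(1-\theta_{k'-1,i})$ equals $1$ iff $\La+\delta_{ki}$ is admissible (and its mirror for $-$), and each factor of $(l_{kj}-l_{ki}-1)/(l_{k+1,j}-l_{ki}-1)$ is checked nonvanishing by arguments parallel to the $k=m$ case, now invoking conditions (2) and (5) to link rows $k'$ and $k'+1$. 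In the $i>m'$ range the analysis mirrors the classical one on the odd side, with additional factors $(l_{k-1,j}-l_{ki}+1)$ and $(l_{kj}-l_{ki}+1)$ in the denominator and numerator handled using conditions (2) and (5). No conceptual novelty is required beyond patient bookkeeping; the length of the case analysis, not its depth, is the obstacle.
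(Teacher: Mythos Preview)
Your overall plan---case analysis on $k$ and the sign, checking that the explicit formula of Theorem~\ref{thm:gt-formula} is nonzero under the admissibility hypothesis---is exactly what the paper does. The classical range $1\le k\le m-1$ and the case $k=m$ go through as you describe.

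There is, however, a genuine gap in your treatment of the odd range $m+1\le k\le m+n-1$ for the $-$ sign with $i\le m'$. In that case the coefficient $\mathscr E^-_{\La,ki}$ contains, besides the product $\prod_{j\ne i,\,j\le m'}(l_{kj}-l_{ki}+1)/(l_{k-1,j}-l_{ki}+1)$ you implicitly treat, the additional factor
\[
\frac{\prod_{j=m'+1}^{k'+1}(l_{k+1,j}-l_{ki})\prod_{j=m'+1}^{k'-1}(l_{k-1,j}-l_{ki}+1)}{\prod_{j=m'+1}^{k'}(l_{kj}-l_{ki})(l_{kj}-l_{ki}+1)}
\]
indexed by $j>m'$. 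Your plan (b)--(c) of checking that every individual numerator and denominator factor is nonzero \emph{fails} here: individual factors in this ratio can vanish. For instance, with $r=0$, $m=1$, $n=2$, $k=2$, $i=1$ and $\la_{21}=1$, $\la_{22}=0$, the denominator factor $l_{22}-l_{21}+1$ equals $0$, while condition (3) forces a matching zero in the numerator. What is actually needed is to identify which factors vanish (this is governed by how many of $\la_{k'-1,m'+1},\la_{k',m'+1},\la_{k'+1,m'+1},\ldots$ are zero, which condition (3) controls via $a=\la_{k'+1,m'}$, $b=\la_{k',m'}$, $c=\la_{k'-1,m'}$), show that the zeros in numerator and denominator cancel in pairs, and then verify the surviving ratio is nonzero. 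This is precisely the computation the paper carries out; it is not long, but it does not fit into your (a)--(b)--(c) template and uses condition (3) in an essential way that your proposal does not mention. An entirely parallel cancellation issue arises in the mixed factor $\prod_{j=1}^{m'}\frac{(l_{kj}-l_{ki})(l_{kj}-l_{ki}+1)}{(l_{k+1,j}-l_{ki})(l_{k-1,j}-l_{ki}+1)}$ of $\mathscr E^+_{\La,ki}$ for $i>m'$.
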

\begin{proof}
The lemmas follows from a case-by-case computation. We give several examples for the case $\mathscr E^\pm _{\La,ki}$ when $m+1\lle k\lle m+n-1$.

Suppose that $\La$ and $\La+\delta_{ki}$ are $\la/\mu$-admissible. 

(1) If $i\lle m'$, then by A(2) we have $\theta_{k'i}=1$ and $\theta_{k'-1,i}=0$. Now it reduces to show that $l_{kj}-l_{ki}-1\ne 0$ for $j\lle m'$. It follows from A(5) that the sequence $(l_{kj})_{j=1}^{m'}$ is strictly decreasing. Hence we only need to check that $l_{k,i-1}-l_{ki}-1\gge 1$, namely $\la_{k',i-1}\gge \la_{k'i}+1$. This follows from A(5) as $\La+\delta_{ki}$ is admissible. 

(2) If $i>m'$, we show that $l_{kj}-l_{ki}\ne 0$ for $j\lle m'$. Since $\La+\delta_{ki}$ is admissible, it follows from A(3) that $\la_{k'm'}\gge i-m'$. Hence
\[
l_{kj}-l_{ki}=\la_{k'j}-j+1-(-\la_{k'i}+i-2m')\gge i-m'+j+1-i+2m'=m'+1-j>0.
\]

Suppose that $\La$ and $\La-\delta_{ki}$ are $\la/\mu$-admissible. We elaborate more for the case $\mathscr E_{ki}^-$ for the factor
\beq\label{eq:factor}
 \frac{\prod_{j=m'+1}^{k'+1}(l_{k+1,j}-l_{ki})\prod_{j=m'+1}^{k'-1}(l_{k-1,j}-l_{ki}+1) }{\prod_{j=m'+1}^{k'}(l_{kj}-l_{ki})(l_{kj}-l_{ki}+1)},
\eeq
where $i\lle m'$.

Let $\la_{k'+1,m'}=a$, $\la_{k'm'}=b$, $\la_{k'-1,m'}=c$, then $b+1\gge a\gge b$ and $c+1\gge b\gge c$. 

(1) If $i<m'$, then by A(3), the factor \eqref{eq:factor} is equal to
\begin{align*}
	&\frac{\prod_{j=m'+1}^{m'+a}(l_{k+1,j}-l_{ki})\prod_{j=m'+a+1}^{k'+1}(j'-2m'-l_{ki})}{\prod_{j=m'+1}^{m'+b}(l_{kj}-l_{ki})\prod_{j=m'+b+1}^{k'}(j'-2m'-l_{ki})}\\
	&\qquad\qquad\qquad\qquad\times \frac{\prod_{j=m'+1}^{m'+c}(l_{k-1,j}-l_{ki}+1)\prod_{j=m'+c+2}^{k'}(j'-2m'-l_{ki})}{\prod_{j=m'+1}^{m'+b}(l_{kj}-l_{ki}+1)\prod_{j=m'+b+2}^{k'+1}(j'-2m'-l_{ki})}. 
\end{align*}
We only care about the part
\beq\label{eq:cancel}
\frac{\prod_{j=m'+a+1}^{k'+1}(j'-2m-l_{ki})\prod_{j=m'+c+2}^{k'}(j'-2m'-l_{ki})}{\prod_{j=m'+b+1}^{k'+1}(j'-2m'-l_{ki})\prod_{j=m'+b+2}^{k'}(j'-2m'-l_{ki})}=\frac{\prod_{j=m'+c+2}^{m'+b+1}(j'-2m'-l_{ki})}{\prod_{j=m'+b+1}^{m'+a}(j'-2m'-l_{ki})},
\eeq
since one easily checks similarly to the previous cases that the rests are nonzero. If $a+c=2b$, then everything in \eqref{eq:cancel} cancels out. If $b=c+1$ and $a=b$, then \eqref{eq:cancel} becomes
\[
m'+b'+1-2m'-l_{ki}<  m'+b'+1-2m'-l_{km'}=0.
\]
The case of $a=b+c$ and $b=c$ is similar.

(2) If $i=m'$, then $a=b=c+1$. By A(2) and A(3),  the factor \eqref{eq:factor} is equal to
\begin{align*}
&\frac{\prod_{j=m'+1}^{m+a'}(l_{k+1,j}-l_{ki})\prod_{j=m'+a+1}^{k'+1}(j'-2m'-l_{ki})}{\prod_{j=m'+1}^{m'+b-1}(l_{kj}-l_{ki})\prod_{j=m'+b}^{k'}(j'-2m'-l_{ki})}	\\
&\qquad\qquad \qquad\qquad \times\frac{\prod_{j=m'+1}^{m'+c}(l_{k-1,j}-l_{ki}+1)\prod_{j=m'+c+2}^{k'}(j'-2m'-l_{ki})}{\prod_{j=m'+1}^{m'+b-1}(l_{kj}-l_{ki}+1)\prod_{j=m'+b+1}^{k'+1}(j'-2m'-l_{ki})}.
\end{align*}
Again, we only care about the part
$$
\frac{\prod_{j=m'+a+1}^{k'+1}(j'-2m'-l_{ki})\prod_{j=m'+c+2}^{k'}(j'-2m'-l_{ki})}{\prod_{j=m'+b}^{k'+1}(j'-2m'-l_{ki})\prod_{j=m'+b+1}^{k'}(j'-2m'-l_{ki})}=\frac{1}{m'+b'-2m'-l_{ki}}=-1.
$$

The rest cases follow from similar straightforward computations.
\end{proof}

Now we will prove Theorem \ref{thm:irr}. Since the actions of all $d_i(u)$ on $L(\la/\mu)$ are simultaneously diagonalizable and have simple spectrum, see Corollary \ref{cor:thin}, it suffices to show that we can get all possible $\xi_{\La'}$ for $\la/\mu$-admissible $\La'$ from $\xi_{\La}$ for any given $\la/\mu$-admissible $\La$. 

By Theorem \ref{thm:main} and Lemma \ref{lem:adm-transform}, it suffices to show that all $\xi_{\La\pm\delta_{ki}}$ with $\la/\mu$-admissible $\La\pm\delta_{ki}$ are elements in $\YglMN \xi_{\La}$ for $j\in \Z_{> 0}$. 

By Theorem \ref{thm:main}, we have
\[
\sum\mathscr E_{\La,ki}^{\pm}\frac{\xi_{\La\pm\delta_{ki}}}{u+l_{ki}+s_{ki}^\pm+\gamma_k}\in \C x_k^{\pm}(u)\xi_{\La},
\]
where the summation is over all $i$ with $\la/\mu$-admissible $\La\pm\delta_{ki}$. Since $L(\la/\mu)$ is thin by Corollary \ref{cor:thin}, namely all $\xi_{\La\pm\delta_{ki}}$ correspond to different $\ell$-weights, it follows from Lemma \ref{lem:nonzero} that $\xi_{\La\pm\delta_{ki}}\in \YglMN\xi_{\La}$, completing the proof of Theorem \ref{thm:irr}. 

\subsection{Tame modules of $\Yone$}\label{sec:Yone}
In this section, we study tame modules of $\Yone$ and prove Theorem \ref{thm:thin-one}. We start by collecting some equalities in $\Yone$ that will be used, see e.g. \cite[Appendix]{HLM}\footnote{Note that $t_{ij}(u)$ corresponds to $\mathcal L_{ji}(u)$ therein.}.

By \eqref{eq comm series}, we have
\beq\label{eq-last-1}
t_{11}(u)t_{21}(x)=\frac{u-x-1}{u-x}t_{21}(x)t_{11}(u)+\frac{1}{u-x}t_{21}(u)t_{11}(x).
\eeq
Differentiating both sides with respect to $x$, we obtain
\beq\label{eq-last-2}
\begin{split}
t_{11}(u)t_{21}'(x)=\frac{u-x-1}{u-x}t_{21}'(x)t_{11}(u)\ +&\ \frac{1}{u-x}t_{21}(u)t_{11}'(x)\\-&\ \frac{1}{(u-x)^2}(t_{21}(x)t_{11}(u)-t_{21}(u)t_{11}(x)).
\end{split}
\eeq

By Gauss decomposition, we have $d_1(u)=t_{11}(u)$ and 
\[
d_2(u)=t_{22}(u)-t_{21}(u)(t_{11}(u))^{-1}t_{12}(u).
\]
The coefficients of $d_1(u)(d_2(u))^{-1}$ are central in $\mathrm{Y}(\gl_{1|1})$, see Lemma \ref{lem:central}.

\begin{lem}\label{lem last-3}
We have $t_{12}(u+1)t_{21}(u)=-t_{22}(u+1)t_{11}(u)+d_2(u)(d_1(u))^{-1}t_{11}(u)t_{11}(u+1)$.	
\end{lem}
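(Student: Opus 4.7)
The plan is to reduce the identity to a single commutation relation between $d_1(u+1)$ and $e_{12}(u)=x_1^+(u)$. Observe first that the right-hand side simplifies: all coefficients of the Cartan currents $d_i(u)$ mutually commute, and $d_1(u)=t_{11}(u)$, so $d_2(u)(d_1(u))^{-1}t_{11}(u)t_{11}(u+1)=d_2(u)\,t_{11}(u+1)$. Thus the claim is equivalent to
$$t_{12}(u+1)t_{21}(u) + t_{22}(u+1)t_{11}(u) = d_2(u)\,t_{11}(u+1).$$

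Next I would apply the $RTT$ relation \eqref{eq comm series} with $(i,j,k,l)=(1,2,2,1)$. Since $|1|=\bar 0$ and $|2|=\bar 1$, the sign prefactor equals $(-1)^{|1||2|+|1||2|+|2||2|}=-1$, and the bracket of the two odd series $t_{12}$ and $t_{21}$ is an anticommutator. Specializing $(u_1,u_2)=(u+1,u)$ gives
$$t_{12}(u+1)t_{21}(u) + t_{21}(u)t_{12}(u+1) = -t_{22}(u+1)t_{11}(u) + t_{22}(u)t_{11}(u+1),$$
which converts the desired identity into
$$(t_{22}(u)-d_2(u))\,t_{11}(u+1) = t_{21}(u)\,t_{12}(u+1).$$

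The final step uses the Gauss decomposition: $t_{22}(u)-d_2(u)=f_{21}(u)d_1(u)e_{12}(u)$, $t_{21}(u)=f_{21}(u)d_1(u)$, and $t_{12}(u+1)=d_1(u+1)e_{12}(u+1)$. After factoring out the common left factor $f_{21}(u)d_1(u)$, the identity reduces to the single statement $e_{12}(u)\,d_1(u+1)=d_1(u+1)\,e_{12}(u+1)$. This follows immediately from \eqref{eq:dx+} with $i=j=1$ (so that $s_1\delta_{11}-s_1\delta_{12}=1$): rewriting the relation as
$$(u'-v-1)d_1(u')x_1^+(v) - (u'-v)x_1^+(v)d_1(u') + d_1(u')x_1^+(u') = 0$$
and setting $u'=u+1$, $v=u$ kills the first summand and yields exactly $x_1^+(u)d_1(u+1)=d_1(u+1)x_1^+(u+1)$. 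The only delicate point is the parity and sign bookkeeping in the second step, but no real obstacle arises.
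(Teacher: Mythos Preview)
Your proof is correct and follows essentially the same route as the paper's. Both arguments use the anticommutator $[t_{12}(u+1),t_{21}(u)]$ obtained from \eqref{eq comm series} and then the single commutation identity $t_{11}(u)t_{12}(u+1)=t_{12}(u)t_{11}(u+1)$; you simply phrase the latter in Drinfeld generators as $e_{12}(u)\,d_1(u+1)=d_1(u+1)\,e_{12}(u+1)$ via \eqref{eq:dx+}, whereas the paper stays in the $t_{ij}$ presentation throughout and plugs directly into $d_2(u)=t_{22}(u)-t_{21}(u)(t_{11}(u))^{-1}t_{12}(u)$.
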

\begin{proof}
By \eqref{eq comm generators}, we have $t_{12}(u)t_{11}(u+1)=t_{11}(u)t_{12}(u+1)$ and 
\[
[t_{12}(u+1),t_{21}(u)]=-t_{22}(u+1)t_{11}(u)+t_{22}(u)t_{11}(u+1).
\]
Therefore, 
\begin{align*}
t_{12}(u+1)t_{21}(u)=&-t_{22}(u+1)t_{11}(u)+t_{22}(u)t_{11}(u+1)-t_{21}(u)t_{12}(u+1)\\
=& 	-t_{22}(u+1)t_{11}(u)+\big(t_{22}(u)-t_{21}(u)(t_{11}(u))^{-1}t_{12}(u)\big)t_{11}(u+1)\\
=& -t_{22}(u+1)t_{11}(u)+d_2(u)(d_1(u))^{-1}t_{11}(u)t_{11}(u+1).\qedhere
\end{align*}
\end{proof}

\medskip

We recall some basic facts about representations of $\Yone$.

Let $\bm \zeta=(\zeta_1(u),\zeta_2(u))$, where $\zeta_1(u),\zeta_2(u)\in\mathcal B=1+u^{-1}\C[[u^{-1}]]$. Denote $L(\bm\zeta)$ the irreducible $\Yone$-module generated by a highest $\ell$-weight vector $v^+$ of $\ell$-weight $\bm \zeta$. It is known from \cite[Theorem 4]{Z95} that $L(\bm\zeta)$ is finite-dimensional if and only if
\beq\label{eq last 9}
\frac{\zeta_1(u)}{\zeta_2(u)}=\frac{\varphi(u)}{\psi(u)},
\eeq
where $\varphi$ and $\psi$ are relatively prime polynomials in $u$ of the same degree. Set $\deg\varphi=k$, then it also known that $\dim L(\bm\zeta)=2^k$, see \cite[Theorem 4]{Z95}.

Let
\beq\label{eq last 8}
\varphi(u)=\prod_{i=1}^k(u+a_i),\qquad \psi(u)=\prod_{j=1}^k(u-b_j),
\eeq
where $a_i,b_j\in\C$. Then $a_i+b_j\ne 0$ for all $1\lle i,j\lle k$.

For a subset $J$ of $\{1,\dots,k\}$, set $\varphi_{J}=\prod_{i\in J } (u+a_i)$. By convention, $\varphi_{\emptyset}=1$.

\begin{lem}\label{lem:char-1-1}
We have 
$$
\chi(L(\bm\zeta))=\sum_{J\subset \{1,\dots,k\}}[\bm\zeta]\cdot\left[\Big(\frac{\varphi_{J}(u-1)}{\varphi_{J}(u)},\frac{\varphi_{J}(u-1)}{\varphi_{J}(u)}\Big)\right],
$$	
where the summation is over all subsets of $\{1,\dots,k\}$.
\end{lem}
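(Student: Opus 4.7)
The plan is to realize $L(\bm\zeta)$ as a tensor product of $k$ two-dimensional evaluation modules and then apply the ring homomorphism $\chi$ from Lemma~\ref{lem chi morphism}.

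Write $\varphi(u)=\prod_{i=1}^{k}(u+a_i)$ and $\psi(u)=\prod_{i=1}^{k}(u-b_i)$ as in \eqref{eq last 8}, set $\bm\zeta^{(i)}:=\bigl((u+a_i)/(u-b_i),\,1\bigr)$ for $i=1,\dots,k$, and let $\bm\zeta_0:=(\varphi/\psi,\,1)=\prod_{i=1}^{k}\bm\zeta^{(i)}$, so that $\bm\zeta=\bm\zeta_0\cdot(\zeta_2(u),\zeta_2(u))$. First I would identify $L(\bm\zeta^{(i)})$ with the $\Yone$-evaluation module $L(a_i+b_i,0)_{b_i}$, obtained by pulling back the $2$-dimensional irreducible $\gl_{1|1}$-module $L(a_i+b_i,0)$ (which is $2$-dimensional precisely because $a_i+b_i\ne 0$, forced by the relative primeness of $\varphi$ and $\psi$) through the composition $\pi_{1|1}\circ\tau_{b_i}$. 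A direct computation on the basis $\{v_1,\,v_2:=e_{21}v_1\}$, using $d_1(u)=t_{11}(u)$ and the Gauss formula $d_2(u)=t_{22}(u)-t_{21}(u)t_{11}(u)^{-1}t_{12}(u)$, yields the two $\ell$-weights $\bm\zeta^{(i)}$ and $\bm\zeta^{(i)}\cdot(\eta_i(u),\eta_i(u))$ with $\eta_i(u):=(u+a_i-1)/(u+a_i)$; hence $\chi(L(\bm\zeta^{(i)}))=[\bm\zeta^{(i)}]\cdot\bigl(1+[(\eta_i(u),\eta_i(u))]\bigr)$.

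Next I would form the tensor product $M:=L(\bm\zeta^{(1)})\otimes\cdots\otimes L(\bm\zeta^{(k)})$ of total dimension $2^{k}$. The pure tensor of the highest $\ell$-weight vectors is itself a highest $\ell$-weight vector of weight $\prod_i\bm\zeta^{(i)}=\bm\zeta_0$ by the coproduct \eqref{coproduct}, so $L(\bm\zeta_0)$ appears as a subquotient of $M$; since $\dim L(\bm\zeta_0)=2^{k}=\dim M$ by \cite[Theorem 4]{Z95}, a Jordan--H\"older length argument forces $M\cong L(\bm\zeta_0)$. Applying Lemma~\ref{lem chi morphism} and expanding
\[
\chi(L(\bm\zeta_0))=\prod_{i=1}^{k}\chi(L(\bm\zeta^{(i)}))=[\bm\zeta_0]\cdot\prod_{i=1}^{k}\bigl(1+[(\eta_i(u),\eta_i(u))]\bigr),
\]
with subsets $J\subseteq\{1,\dots,k\}$ indexing which factors contribute the $\eta_i$-term, then produces $\sum_{J}[\bm\zeta_0\cdot(\varphi_J(u-1)/\varphi_J(u),\varphi_J(u-1)/\varphi_J(u))]$. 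Finally, since $L(\bm\zeta)\cong L(\bm\zeta_0)\otimes\C_{\zeta_2(u)}$ (tensoring by a one-dimensional module shifts the highest weight by $(\zeta_2,\zeta_2)$ and preserves irreducibility), multiplying by $[(\zeta_2(u),\zeta_2(u))]$ yields the stated formula.

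The main obstacle will be the length-one conclusion above: we must rely on the dimension input $\dim L(\bm\zeta_0)=2^{k}$ from \cite{Z95} and combine it with the highest weight subquotient observation to deduce that the $2^{k}$-dimensional tensor product $M$ is genuinely irreducible, rather than having additional composition factors whose $\ell$-weights one would need to account for separately. Everything else — the $2$-dimensional computation, the coproduct argument producing the highest weight, and the final tensor-with-one-dimensional twist — is routine.
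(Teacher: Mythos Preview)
Your proposal is correct and follows essentially the same route as the paper: realize $L(\bm\zeta)$, up to a one-dimensional twist, as a tensor product of $k$ two-dimensional evaluation modules, compute $\chi$ on each factor, and multiply via Lemma~\ref{lem chi morphism}. The only cosmetic difference is that the paper invokes an irreducibility criterion for the tensor product (\cite[Theorem~5]{Z95} or \cite[Theorem~4.2]{Zha16}) directly, whereas you extract the same conclusion from the dimension formula $\dim L(\bm\zeta_0)=2^{k}$ of \cite[Theorem~4]{Z95} combined with the highest-weight subquotient observation.
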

\begin{proof}
Recall that $L(a_i,b_i)$ is the two-dimensional irreducible $\gl_{1|1}$-module with the highest weight $(a_i,b_i)$. We also have the evaluation $\Yone$-module $L(a_i,b_i)$. Clearly, up to a one-dimensional module, we have 
$$
L(\bm\zeta)\cong \bigotimes_{i=1}^k L(a_i,b_i),
$$
where the order for the tensor product is not important as the tensor product is irreducible. Note that under the condition $a_i+b_j\ne 0$ for all $1\lle i,j\lle k$, the irreducibility of the tensor product $\bigotimes_{i=1}^k L(a_i,b_i)$ follows from either \cite[Theorem 5]{Z95} or \cite[Theorem 4.2]{Zha16}. Hence it suffices to consider irreducible tensor products of the form above.

Using the Gauss decomposition, one has
\[
\chi(L(a_i,b_i))=\left[\Big(1+\frac{a_i}{u},1-\frac{b_i}{u}\Big)\right]\left(1+\left[\Big(\frac{u+a_i-1}{u+a_i},\frac{u+a_i-1}{u+a_i}\Big)\right]\right).
\]
Now the statement follows from the fact that the $q$-character map is a homomorphism from $\mathcal C$ to $\Z[\mathfrak B]$, see Lemma \ref{lem chi morphism}.
\end{proof}

\begin{cor}\label{cor last}
The $\Yone$-module $L(\bm\zeta)$ is thin if and only if $\varphi$ has no multiple roots.\qed	
\end{cor}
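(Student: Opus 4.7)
The plan is to read off the thinness criterion directly from the $q$-character formula in Lemma~\ref{lem:char-1-1} and then reduce it to an elementary question about difference equations of polynomials. Writing $R_J(u) := \varphi_J(u-1)/\varphi_J(u)$ for each subset $J \subseteq \{1,\dots,k\}$, Lemma~\ref{lem:char-1-1} gives
\[
\chi(L(\bm\zeta)) = \sum_{J \subseteq \{1,\dots,k\}} \bigl[\bm\zeta \cdot (R_J(u), R_J(u))\bigr].
\]
Since the coefficient of $[\bm\eta]$ in $\chi(L(\bm\zeta))$ equals $\dim L(\bm\zeta)_{\bm\eta}$, the module is thin if and only if the $2^k$ summands on the right are pairwise distinct, equivalently, the map $J \mapsto R_J$ is injective.

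The key step I would then isolate is the following lemma: $R_J = R_{J'}$ as rational functions if and only if $\varphi_J = \varphi_{J'}$ as polynomials. For the nontrivial direction, cross-multiplying yields $\varphi_J(u-1)\varphi_{J'}(u) = \varphi_{J'}(u-1)\varphi_J(u)$. Set $G = \gcd(\varphi_J, \varphi_{J'})$ and factor $\varphi_J = GP$, $\varphi_{J'} = GQ$ with $P, Q$ monic and coprime. The relation simplifies to $P(u-1)Q(u) = Q(u-1)P(u)$. If $r$ were a root of $P$, evaluating at $u = r$ gives $P(r-1)Q(r) = 0$; coprimality forces $Q(r) \neq 0$, hence $P(r-1) = 0$. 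Iterating produces infinitely many roots of $P$, a contradiction unless $P$ is constant, so $P = 1$ (monicity) and symmetrically $Q = 1$, giving $\varphi_J = \varphi_{J'}$.

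Combining the two reductions, $L(\bm\zeta)$ is thin if and only if the polynomials $\varphi_J = \prod_{i \in J}(u+a_i)$ are pairwise distinct as $J$ ranges over subsets of $\{1,\dots,k\}$. If the $a_i$ are pairwise distinct, then $\varphi_J$ recovers $J$ from its multiset of roots; conversely, if $a_i = a_j$ for some $i \neq j$, the singletons $\{i\}$ and $\{j\}$ give $\varphi_{\{i\}} = \varphi_{\{j\}}$ while being different subsets. Hence the map $J \mapsto R_J$ is injective precisely when $\varphi$ has no multiple roots, completing the proof. The only nonroutine piece is the cancellation argument for the difference identity $P(u-1)Q(u) = Q(u-1)P(u)$, and I do not expect any serious obstacle beyond this.
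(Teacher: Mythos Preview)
Your proof is correct and follows the same approach the paper intends: the corollary is marked with \qed\ immediately after the statement, meaning it is regarded as a direct consequence of Lemma~\ref{lem:char-1-1}, and you have simply spelled out the details of that deduction. Your cancellation argument for the difference identity $P(u-1)Q(u)=Q(u-1)P(u)$ is a clean way to handle the possible telescoping among the factors $(u+a_i-1)/(u+a_i)$, which is the only point requiring any care.
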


Indeed, the condition that $\varphi$ has no multiple roots is also a necessary condition for $L(\bm\zeta)$ being tame.

\begin{prop}\label{prop last}
The $\Yone$-module $L(\bm\zeta)$ is tame if and only if $\varphi$ has no multiple roots.	
\end{prop}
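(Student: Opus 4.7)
The ``if'' direction is immediate: by Corollary~\ref{cor last}, if $\varphi$ has no multiple roots then $L(\bm\zeta)$ is thin, and any thin module is automatically tame. For the converse, my plan is to prove the contrapositive: if $\varphi$ has a multiple root $-a$, then $d_1(u)=t_{11}(u)$ fails to be diagonalizable on $L(\bm\zeta)$, which already rules out tameness. A preliminary reduction: tensoring $L(\bm\zeta)$ with a suitable one-dimensional module $\C_{\xi(u)}$ rescales both $\zeta_1$ and $\zeta_2$ by the common factor $\xi(u)$, preserving both tameness and the polynomial $\varphi$; after this reduction (and, if necessary, a shift by $\tau_z$ to avoid $a=0$) I may assume $\zeta_1(u)=\varphi(u)/u^k$ and $\zeta_2(u)=\psi(u)/u^k$, so that $\zeta_1$ has a zero of order $\ge 2$ at $u=-a$, giving $\zeta_1(-a)=\zeta_1'(-a)=0$.

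The main construction is to produce a two-dimensional subspace of $L(\bm\zeta)$ on which $d_1(u)$ acts with a Jordan block. On the finite-dimensional $L(\bm\zeta)$ the series $t_{21}(u)$ acts as an operator-valued rational function of $u$, so the vectors
\[
w_0 \,:=\, t_{21}(-a)\,v^+, \qquad w_1 \,:=\, \frac{d}{dx}\bigg|_{x=-a} t_{21}(x)\,v^+
\]
are well-defined. Plugging $v^+$ into \eqref{eq-last-1} at $x=-a$ and using $\zeta_1(-a)=0$ gives $t_{11}(u)w_0 = \frac{u+a-1}{u+a}\zeta_1(u)\,w_0$, so $w_0$ is an eigenvector of $d_1(u)$. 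Plugging $v^+$ into \eqref{eq-last-2} at $x=-a$ and using both $\zeta_1(-a)=0$ and $\zeta_1'(-a)=0$ collapses two terms on the right and yields
\[
t_{11}(u)\,w_1 \;=\; \frac{u+a-1}{u+a}\,\zeta_1(u)\,w_1 \;-\; \frac{\zeta_1(u)}{(u+a)^2}\,w_0.
\]
Since $\zeta_1(u)/(u+a)^2$ is a nonzero rational function, this shows that, provided $w_0 \ne 0$, the pair $\{w_0,w_1\}$ is linearly independent (otherwise the right-hand side would vanish) and spans a subspace on which some coefficient of $d_1(u)$ is a nontrivial Jordan block.

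The one remaining step is to verify $w_0 \ne 0$, and this is the main obstacle. My plan is to realize $L(\bm\zeta)$ as a one-dimensional twist of the tensor product $L(a_1,b_1)\otimes\cdots\otimes L(a_k,b_k)$, reordering so that some index $i$ with $a_i=a$ occupies the last slot. Iterating the coproduct $\Delta(t_{21}(u)) = t_{21}(u)\otimes t_{11}(u) + t_{22}(u)\otimes t_{21}(u)$ and acting on $v_1\otimes\cdots\otimes v_k$, the component in direction $v_1\otimes\cdots\otimes v_{k-1}\otimes w_k$ equals, up to a sign, $\prod_{i<k}(-a-b_i)/(-a)^k$, which is nonzero because the relative primality of $\varphi$ and $\psi$ forces $a+b_i\ne 0$ for every $i$. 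An alternative route would apply Lemma~\ref{lem last-3} to $v^+$, giving $t_{12}(u+1)t_{21}(u)v^+ = [\zeta_2(u)\zeta_1(u+1)-\zeta_2(u+1)\zeta_1(u)]v^+$; the non-identical vanishing of this scalar near $u=-a$ forces $t_{21}(-a)v^+\ne 0$ independently of the tensor product realization.
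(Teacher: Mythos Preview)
Your main argument is correct and is close in spirit to the paper's, but the two diverge precisely at the step you flag as the obstacle: showing $w_0\ne 0$.

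Your primary route---realizing $L(\bm\zeta)$ as $\bigotimes_i L(a_i,b_i)$ with $a_k=a$ in the last slot and reading off the coefficient of $v_1\otimes\cdots\otimes v_{k-1}\otimes w_k$ in $t_{21}(-a)v^+$ via the iterated coproduct---is valid: that coefficient is, up to a sign, $\prod_{i<k}(-a-b_i)/(-a)^k$, and relative primality of $\varphi,\psi$ gives $a+b_i\ne 0$. The reordering is harmless since the tensor product is irreducible in any order. This is a clean argument and slightly more elementary than the paper's.

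Your \emph{alternative} route, however, has a genuine gap. Applying Lemma~\ref{lem last-3} to $v^+$ and evaluating at $u=-a$ gives
\[
t_{12}(-a+1)\,w_0 \;=\; \zeta_2(-a)\,\zeta_1(-a+1)\,v^+,
\]
and while $\zeta_2(-a)\ne 0$, the factor $\zeta_1(-a+1)=\varphi(-a+1)/(-a+1)^k$ vanishes whenever some $a_i=a-1$. The phrase ``non-identical vanishing near $u=-a$'' does not rescue this: if the scalar vanishes at $u=-a$, the identity places no constraint on $w_0$. The paper's proof is exactly the Lemma~\ref{lem last-3} route, but repaired: instead of $v^+$ it starts from a different $\ell$-weight vector $v$, obtained by first lowering along all indices $i$ with $a_i=a-1$. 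This replaces $\varphi(u)$ by $\wp(u)=\prod_{a_i\ne a-1}(u+a_i)\prod_{a_i=a-1}(u+a_i-1)$, which still has $-a$ as a double zero, and now the analogue of $\zeta_1(-a+1)$ becomes $\prod_{a_i\ne a-1}(a_i-a+1)\prod_{a_i=a-1}(a_i-a-1)\ne 0$. So the paper's extra maneuver is precisely what is needed to make your second approach go through; your first approach sidesteps the issue entirely.
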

\begin{proof}
	Again it suffices to show ``only if" part for the case of $\bigotimes_{i=1}^k L(a_i,b_i)$. We adopt the method from \cite{NT98}. Note that in this case, a  finite-dimensional irreducible module is tame if and only if the action of $t_{11}(u)$ on it is semisimple.
	
	Suppose $a_1$ is a multiple root of $\varphi$, we show that $\bigotimes_{i=1}^k L(a_i,b_i)$ is not tame. Without loss of generality, we assume there exists $\eta$ such that $a_i=a_1-1$ for $\eta< i\lle k$ and $a_j\ne a_1-1$ for $1\lle  j\lle \eta$.
	
	By Lemma \ref{lem:char-1-1}, there exists an $\ell$-weight vector $v$ of $\ell$-weight 
	$$
	\Big(u^{-k}\prod_{i=1}^\eta(u+a_i)\prod_{i=\eta+1}^k(u+a_i-1),u^{-k}\prod_{i=1}^k(u-b_i)\prod_{i=\eta+1}^k\frac{u+a_i-1}{u+a_i}\Big).
	$$
	Denote the polynomial $\prod_{i=1}^\eta(u+a_i)\prod_{i=\eta+1}^k(u+a_i-1)$ by $\wp(u)$. We have $\wp(-a_i)=\wp'(-a_i)=0$ by assumption.
	
	Let $\tl t_{ij}(u)$ be the linear operators corresponding to $u^k t_{ij}(u)$ in $\End(\bigotimes_{i=1}^k L(a_i,b_i))((u^{-1}))$, respectively. Then all $\tl t_{ij}(u)$ are polynomials in $u$, see \eqref{coproduct}. In particular, we have $\tl t_{11}(u)v=\wp(u)v$.
	
	Set $w=\tl t_{21}(-a_1)v$ and $w'=\tl t_{21}'(-a_1)v$, where $\tl t_{21}'(u)$ is the derivative of $\tl t_{21}(u)$ with respect to $u$. It follows from \eqref{eq-last-1} and $\tl t_{11}(-a_1)v=\wp(-a_1)v=0$ that
	\[
	\tl t_{11}(u)w=\tl t_{11}(u)\tl t_{21}(-a_1)v=\frac{u+a_1-1}{u+a_1}\wp(u)w.
	\]
	Similarly, by \eqref{eq-last-2} we have
	\[
	\tl t_{11}(u)w'=\tl t_{11}(u)\tl t_{21}'(-a_1)v=\wp(u)\left(\frac{u+a_1-1}{u+a_1}w'-\frac{1}{(u+a_1)^2}w\right).
	\]
	Hence it suffices to show that $w\ne 0$.
	
	By Lemma \ref{lem:central} and Lemma \ref{lem last-3}, we have
	\begin{align*}
	\tl t_{12}(u+1)\tl t_{21}(u)v
	=& -\tl t_{22}(u+1)\wp(u)v+\Big(\prod_{i=1}^k\frac{u-b_i}{u+a_i}\Big)\wp(u)\wp(u+1)v\\
	=& -\tl t_{22}(u+1)\wp(u)v+\prod_{i=1}^\eta(u+a_i+1)\prod_{i=\eta+1}^k(u+a_i-1)\prod_{i=1}^k(u-b_i)v.
	\end{align*}
    Setting $u=-a_1$, one obtains
    \[
    \tl t_{12}(-a_1+1)w=\prod_{i=1}^\eta(-a_1+a_i+1)\prod_{i=\eta+1}^k(-a_1+a_i-1)\prod_{i=1}^k(-a_1-b_i)w\ne 0.
    \]
    Here we used the definition of $\eta$.	
\end{proof}

\begin{proof}[Proof of Theorem \ref{thm:thin-one}]
	It follows from Corollary \ref{cor last} and Proposition \ref{prop last}.
\end{proof}	

\begin{rem}
We remark that to determine if a $\Yone$-module is tame, it depends on the choice of the parity sequence in the RTT presentation (or the choice of Borel subalgebra of $\Yone$). This is a new feature for super case, cf. \cite[Proposition 1.14]{NT98}. If the parity sequence is $|0|=\bar 1$ and $|1|=\bar 0$ instead, we denote the super Yangian corresponding to this new choice by $\wt{\mathrm Y}(\gl_{1|1})$. One can define highest $\ell$-weight theory and $q$-characters for $\wt{\mathrm Y}(\gl_{1|1})$. We use tilde (not to be confused with the tilde used in the proof above) to distinguish notations from $\Yone$ and $\wt{\mathrm Y}(\gl_{1|1})$. Clearly, $\Yone$ and $\wt{\mathrm Y}(\gl_{1|1})$ are isomorphic and one can identify $\tl t_{ij}(u)$ of $\wt{\mathrm Y}(\gl_{1|1})$ with $t_{3-i,3-j}(u)$ of $\Yone$. Note that we have
$$
d_1(u)\big(d_2(u)\big)^{-1}=\big(\tl d_{1}(u+1)\big)^{-1}\tl d_2(u+1),
$$
see e.g. \cite[Section 4.3]{LM19}. Consider the $\Yone$-module $L(\bm\zeta)$. As a $\wt{\mathrm Y}(\gl_{1|1})$-module, $L(\bm\zeta)$ has the highest $\ell$-weight $\bm{\tl\zeta}=(\tl\zeta_1(u),\tl \zeta_2(u))$ satisfying 
$$
\frac{\tl\zeta_1(u)}{\tl \zeta_2(u)}=\frac{\zeta_2(u-1)}{\zeta_1(u-1)}=\frac{\psi(u-1)}{\varphi(u-1)}.
$$
Similarly, one shows that $L(\bm\zeta)$ is tame with respect to $\wt{\mathrm Y}(\gl_{1|1})$ if and only if $\psi$ has no multiple roots. \qed
\end{rem}

We conclude this section by an example which we mentioned in the introduction.

\begin{eg}
Consider the tensor product of evaluation modules $M:=L(3,0)\otimes L(-1,0)$. Its highest $\ell$-weight is $(\frac{(u+3)(u-1)}{u^2},1)$. The corresponding polynomial $\varphi(u)$ is $(u+3)(u-1)$ which has no multiple roots. Therefore, 
the module $M$ is thin. Moreover, $M\cong L(2,0)\oplus L(1,1)$ as $\gl_{1|1}$-modules. However, $M$ is not isomorphic to any tensor products of skew representations as the highest $\ell$-weights $\bm \zeta=(\zeta_1,\zeta_2)$ of skew representations satisfy $a_i+b_i\gge 0$ for all $i$ after rearranging $b_i$, where $a_i$ and $b_i$ are defined in \eqref{eq last 9} and \eqref{eq last 8}.\qed
\end{eg}

\section{Quantum affine superalgebra}\label{sec q}
Throughout this section, we shall assume $q\in\C^\times$ is generic. Recall $s_i$ from Section \ref{sec glmn} and set $q_i=q^{s_i}$. For $k\in\Z$, we also set $$[k]:=\frac{q^k-q^{-k}}{q^{}-q^{-1}}.$$ We call $[k]$ the \emph{associated $q$-number} of $k$. Sometimes, we shall use the same notations from previous sections for the counter part of quantum case. 

\subsection{Quantum superalgebra $\UqglMN$} The quantum superalgebra $\UqglMN$ is a superalgebra with generators $e_{i}^\pm$ and $t_j^\pm$ for $i\in I$ and $j\in \bar I$ where $|e_m^\pm|=\bar 1$ and $|t_i^{\pm 1}|=|e_j^\pm|=\bar 0$ for $i\in \bar I$ and $j\in I\setminus\{m\}$. The relations of $\UqglMN$ \cite[Proposition 10.4.1]{Yam94} are given by
\[
t_it_i^{-1}=t_i^{-1}t_i=1,\qquad t_ie_j^\pm t_i^{-1}=q_i^{(\epsilon_i,\alpha_j)}e_j^\pm,
\]
\[
[e_j^+,e_k^-]=\delta_{jk}\frac{t_j^{s_j}t_{j+1}^{-s_{j+1}}-t_j^{-s_j}t_{j+1}^{s_{j+1}}}{q_j-q_j^{-1}},
\]
\[
[e_i^\pm,e_j^\pm]=(e_m^\pm)^2=0,\quad \text{ if }|i-j|\ne 1,
\]
\[
[e_{j}^\pm,[e_j^\pm,e_{j+1}^\pm]_{q^{-1}}]_q=[e_{j}^\pm,[e_j^\pm,e_{j-1}^\pm]_{q^{-1}}]_q=0,\quad \text{ if }j\in I\setminus \{m\},
\]
\[
[[[e_{m-1}^\pm,e_m^\pm]_q,e_{m+1}^\pm]_{q^{-1}},e_m^\pm]=0,\quad \text{ when }m,n>1,
\]
where $[a,b]_q=ab-(-1)^{|a||b|}qba$ for homogeneous $a,b$ and $q\in\C$, and $[\cdot,\cdot]=[\cdot,\cdot]_{1}$.

Let $\la=(\la_1,\la_2,\dots,\la_{m+n})$ be a tuple of complex numbers. Denote $\mathscr L(\la)$ the irreducible module generated by a nonzero vector $v$ satisfying the conditions 
\[
t_{i}v=q^{\la_i} v,\quad e_{j}v=0,
\]
for $i\in \bar I$ and $j\in I$. We call $\mathscr L(\la)$ the {\it irreducible $\UqglMN$-module of highest weight $\la$}.

Let $\mathscr V:=\C^{m|n}$ be the vector superspace with a basis $v_i$, $i\in \bar I$, such that $|v_i|=|i|$. Let $E_{ij}\in\End(\mathscr V)$ be the linear operators such that $E_{ij}v_k=\delta_{jk}v_i$. The map $\rho_{\mathscr V}:\UqglMN\to \End(\mathscr V)$,
$$
\rho_{\mathscr V}(t_i)=\sum_{k\in \bar I}q^{\delta_{ik}}E_{kk},\quad \rho_{\mathscr V}(e_j^+)=E_{j,j+1},\quad \rho_{\mathscr V}(e_j^-)=E_{j+1,j},\quad i\in \bar I,~ j\in I,
$$ 
defines a $\UqglMN$-module structure on $\mathscr V$. We call it the \emph{vector representation} of $\UqglMN$. The highest weight of $\mathscr V$ is the tuple $(1,0,\dots,0)$. 

The following theorem was shown for essentially typical $\glMN$-weight $\la$ in \cite[Theorem 4]{PSV94}. However, the same proof also works for covariant $\la$, cf. \cite[Theorem 7]{SV10} and \cite{FSZ20}.
\begin{thm}[{\cite[Theorem 4]{PSV94}}]\label{thm:q-GT-formulas}
Let $\la$ be an $(m|n)$-covariant weight, then $\mathscr L(\la)$
admits a basis $\xi_{\La}$ parameterized by all $\la$-admissible GT tableaux $\La$. The actions of the generators of $\UqglMN$ are given by the formulas
\begin{equation*}
t_{k}\xi_{\La}=q^{\left(\sum_{i=1}^{k}\la_{kj}-\sum_{j=1}^{k-1}\la_{k-1,j}\right)}\xi_{\La},\quad 1\lle k\lle m+n;
\end{equation*}
\begin{equation*}
e_{k}^+\xi_{\La}=-\sum_{i=1}^{k}\frac{\Pi_{j=1}^{k+1}[l_{k+1,j}-l_{ki}] }
  {\Pi_{j\neq i,j=1}^{k} [l_{kj}-l_{ki}] }\xi_{\La+\delta_{ki}},
\quad 1\lle k\lle m-1;
\end{equation*}

\begin{equation*}
e_{k}^-\xi_{\La}=\sum_{i=1}^{k}\frac{\Pi_{j=1}^{k-1}[l_{k-1,j}-l_{ki}]}
{\Pi_{j\neq i,j=1}^{k}[l_{kj}-l_{ki}]}\xi_{\La-\delta_{ki}},
\quad  1\lle k\lle m-1;
\end{equation*}
\begin{equation*}
\begin{split}
e_{m}^+\xi_{\La}=&\sum_{i=1}^{m}\theta_{mi}(-1)^{i-1}(-1)^{\theta_{m1}+\cdots+\theta_{m,i-1}}
\\
&\times  \frac{\Pi_{1\lle j< i\lle m} [l_{mj}-l_{mi}-1]}
  {\Pi_{1\lle i<j\lle m} [l_{mj}-l_{mi}]
    \Pi_{j\neq i,j=1}^{m}[l_{m+1,j}-l_{mi}-1]}
    \xi_{\La+\delta_{mi}},
\end{split}
\end{equation*}

\begin{equation*}
\begin{split}
e_{m}^- \xi_{\La}=&\sum_{i=1}^{m}(1-\theta_{mi})(-1)^{i-1}(-1)^{\theta_{m1}+\cdots+\theta_{m,i-1}}
\\
&\times  \frac{[l_{mi}-l_{m+1,m+1}]\Pi_{1\lle  i<j\lle m} [l_{mj}-l_{mi}+1]\Pi_{j=1}^{m-1}[l_{m-1,j}-l_{mi}]}
  {\Pi_{1\lle j< i\lle m} [l_{mj}-l_{mi}]} \xi_{\La-\delta_{mi}},
\end{split}
\end{equation*}
and for $m+1\lle k\lle m+n-1$
\begin{equation*}
\begin{split}
e_k^+ \xi_{\La}=&\sum_{i=1}^{m}\theta_{ki}(-1)^{\vartheta_{ki}}(1-\theta_{k-1,i})\times
\prod_{j\neq i,j =1}^{m}\left(\frac{[l_{kj}-l_{ki}-1]}{[l_{k+1,j}-l_{ki}-1]}\right)
\xi_{\La+\delta_{ki}}
\\
&-\sum_{i=m+1}^{k} 
\prod_{j=1}^{m}\left(\frac{[l_{kj}-l_{ki}][l_{kj}-l_{ki}+1]}{[l_{k+1,j}-l_{ki}][l_{k-1,j}-l_{ki}+1]} \right)\times
  \frac{\Pi_{j=m+1}^{k+1}[l_{k+1,j}-l_{ki}]}
  {\Pi_{j\neq i,j=m+1}^{k} [l_{kj}-l_{ki}]}\xi_{\La+\delta_{ki}} ,
\end{split}
\end{equation*}

\begin{equation*} 
\begin{split}
e_{k}^-\xi_{\La}=&
\sum_{i=1}^{m}\theta_{k-1,i}(-1)^{\vartheta_{ki}}(1-\theta_{ki})\times\frac{\Pi_{j=m+1}^{k+1}[l_{k+1,j}-l_{ki}]\Pi_{j=m+1}^{k-1}[l_{k-1,j}-l_{ki}+1]}
  {\Pi_{j=m+1}^{k} [l_{kj}-l_{ki}][l_{kj}-l_{ki}+1]}
\\
&\times
\prod_{j\neq i,j=1}^{m}\left(\frac{[l_{kj}-l_{ki}+1]}{[l_{k-1,j}-l_{ki}+1]}\right)
\xi_{\La-\delta_{ki}}
  + \sum_{i=m+1}^{k}
\frac{\prod_{j=m+1}^{k-1}[l_{k-1,j}-l_{ki}]}{\prod_{j\neq i,j =m+1}^{k}[l_{kj}-l_{ki}]}
\xi_{\La-\delta_{ki}}.
\end{split}
\end{equation*}
Here we use the same notations and conventions as in Theorem \ref{thm:gt-formula} with $r=0$. Note that $k'=k$ for all $k\in\Z$.\qed
\end{thm}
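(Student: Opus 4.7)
The plan is to adapt the proof of \cite[Theorem 4]{PSV94}, originally given for essentially typical weights, to the covariant case, using as a guide the classical formulas in Theorem \ref{thm:gt-formula} (with $r=0$), whose covariant version is established in \cite{FSZ20}.

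First I would take the displayed formulas as the definition of a $\UqglMN$-action on the vector space spanned by symbols $\xi_{\La}$ indexed by $\la$-admissible Gelfand-Tsetlin tableaux, and verify the defining relations of $\UqglMN$ family by family. The Cartan relations and the commutation of $t_i$ with $e_j^\pm$ are immediate from the eigenvalue formula for $t_k$ and the observation that $e_j^\pm$ shifts $\La$ by $\pm \delta_{ji}$, which changes the displayed weight by $\pm \alpha_j$. The $q$-Serre relations at the even simple roots and the relations $[e_i^\pm, e_j^\pm] = 0$ for $|i-j|\gge 2$ reduce to their $\gl_{m+n}$ counterparts on each block of the GT tableau and follow by a straightforward $q$-analogue of the verification in \cite{FSZ20}.

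Next, for the mixed commutators $[e_j^+, e_k^-]$, I would treat $|j-k|\gge 1$ and $j=k$ separately. The case $|j-k|\gge 1$ follows because the formulas for $e_j^\pm$ and $e_k^\mp$ act on non-adjacent rows, and the remaining overlap is resolved by the $q$-number identity $[a+1][b] - [a][b+1] = [a-b]$. The diagonal case $j=k$ is the core: one must check
\begin{equation*}
[e_k^+, e_k^-]\xi_{\La}=\frac{t_k^{s_k}t_{k+1}^{-s_{k+1}}-t_k^{-s_k}t_{k+1}^{s_{k+1}}}{q_k-q_k^{-1}}\,\xi_{\La}.
\end{equation*}
This is a $q$-deformation of the corresponding classical identity and is proved by a partial-fraction argument for a rational function in a formal variable whose residues at the simple poles $[l_{ki}]$ recover the required matrix elements. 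The argument of \cite{PSV94} applies verbatim and does not use typicality; it only needs that $[l_{kj}-l_{ki}]\ne 0$ for $i\ne j$ in an admissible tableau, which holds for covariant $\la$ at generic $q$.

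The main obstacle is the higher super Serre relation $[[[e_{m-1}^{\pm}, e_m^{\pm}]_q, e_{m+1}^{\pm}]_{q^{-1}}, e_m^{\pm}]=0$ at the odd simple root, since the formulas for $e_m^\pm$ and the adjacent $e_{m\pm 1}^\pm$ carry sign factors $(-1)^{\vartheta_{ki}}$ that depend globally on the column index of $\La$. My strategy here is to expand the quadruple composite in the basis and show that, for each target tableau $\xi_{\La'}$, the coefficient factors as a nonzero product times a sum of two $q$-rational expressions that cancel by the identities $[x]=-[-x]$ and $[a][b+1]-[a+1][b]=[a-b]$; the sign factors from the two resulting terms differ by $(-1)^{\theta_{m'i}}$ applied to a single column, producing the required cancellation. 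Covariance of $\la$, as opposed to typicality, enters only through the admissibility conditions A(1)--A(6), which, together with genericity of $q$, prevent the denominators of the nontrivial matrix elements from vanishing, in the spirit of Lemma \ref{lem:nonzero}. Finally, I would compare highest weights to identify the constructed module with $\mathscr L(\la)$.
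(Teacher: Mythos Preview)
The paper does not supply its own proof of this theorem: it ends the statement with \qed and, in the sentence preceding it, simply asserts that the argument of \cite[Theorem~4]{PSV94} for essentially typical weights ``also works for covariant $\la$, cf.\ \cite[Theorem~7]{SV10} and \cite{FSZ20}.'' Your proposal is precisely an elaboration of that referenced argument---define the action by the formulas, verify the $\UqglMN$ relations directly, and identify the result with $\mathscr L(\la)$---so in strategy you are fully aligned with what the paper invokes.

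One point deserves tightening. Your final sentence, ``compare highest weights to identify the constructed module with $\mathscr L(\la)$,'' is not by itself sufficient: matching highest weights only gives a nonzero map from a Verma-type module, not an isomorphism with the irreducible $\mathscr L(\la)$. You need in addition either (i) that the module you built is irreducible, or (ii) a dimension count showing it has the same dimension as $\mathscr L(\la)$. The standard route, used implicitly in \cite{PSV94,SV10,FSZ20}, is (ii): for covariant $\la$ and generic $q$ the character of $\mathscr L(\la)$ coincides with that of the classical $L(\la)$ (e.g.\ by a flat-deformation argument over $\C[q,q^{-1}]$ localized at generic $q$, or by the known character formula for covariant modules), hence $\dim\mathscr L(\la)$ equals the number of $\la$-admissible GT tableaux; the surjection from your constructed module onto $\mathscr L(\la)$ is then forced to be an isomorphism. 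Without this step the argument is incomplete.
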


\subsection{Quantum affine superalgebra $\UqglMNh$}
Let $\mathscr R(u)\in\End(\mathscr V^{\otimes2})[u]$ be the trigonometric R-matrix, see e.g. \cite{PS81},
\beq\label{eq:q-R-matrix}
\begin{split}
	\mathscr R(u)=&\sum_{i\in \bar I}(uq_i-q_i^{-1})E_{ii}\otimes E_{ii}+(u-1)\sum_{i\ne j}E_{ii}\otimes E_{jj}\\
	&\qquad \qquad+u\sum_{i<j}(q_i-q_i^{-1})E_{ji}\otimes E_{ij}+\sum_{i<j}(q_j-q_j^{-1})E_{ij}\otimes E_{ji}.
\end{split}
\eeq

The quantum affine superalgebra $\UqglMNh$ is the unital associative superalgebra generated by $t_{ij}^{(a,\pm)}$ for $i,j\in\bar I$ and $a\in\Z_{\gge 0}$ with parity $|i|+|j|$. For $k=1,2$, set 
\[
T_k^\pm(u)=\sum_{i,j\in\bar I} t_{ij}^\pm(u)\otimes E_{ij}^{(k)}\in \UqglMNh\otimes \End(\mathscr V^{\otimes 2})[[u^{\pm 1}]],
\]
where $E_{ij}^{(k)}$ are defined similarly as in \eqref{eq:eijk} and
\[
t_{ij}^\pm(u)=\sum_{a\gge 0}t_{ij}^{(a,\pm)}u^{\pm a}\in \UqglMNh[[u^{\pm 1}]].
\]
The relations of $\UqglMNh$ are given by
\beq\label{eq:RTTq}
\begin{split}
\mathscr R(u_1/u_2)T_1^{\pm}(u_1)T_2^{\pm}(u_2)=T_2^{\pm}(u_2)T_1^{\pm}(u_1)\mathscr R(u_1/u_2),\\
\mathscr R(u_1/u_2)T_1^{+}(u_1)T_2^{-}(u_2)=T_2^{- }(u_2)T_1^{+}(u_1)\mathscr R(u_1/u_2),\\
t_{ij}^{(0,-)}=t_{ji}^{(0,+)}=0,\quad \text{ for }1\lle i<j\lle m+n,\\
t_{ii}^{(0,+)}t_{ii}^{(0,-)}=t_{ii}^{(0,-)}t_{ii}^{(0,+)}=1,\quad \text{ for }i\in \bar I.
\end{split}
\eeq

The Gauss decomposition of $\UqglMNh$ gives generating series
\[
e_{ij}^\pm(u)=\sum_{a\gge 0}e_{ij}^{(a,\pm)}u^{\pm a},\quad f_{ji}(u)=\sum_{a\gge 0}f_{ji}^{(a,\pm)}u^{\pm a},\quad d_k^\pm(u)=\sum_{a\gge 0}d_{k,a}^\pm u^{\pm a},
\]
where $1\lle i< j\lle m+n$ and $k\in \bar I$, such that
\beq\label{eq:Gauss-q}
T^\pm(u)=\Big(\sum_{i<j}f_{ji}^\pm(u)\otimes E_{ji}+1\otimes \mathsf{Id}_{\mathscr V}\Big)\Big(\sum_{k}d_{k}^\pm(u)\otimes E_{kk}\Big)\Big(\sum_{i<j}e_{ij}^\pm(u)\otimes E_{ij}+1\otimes \mathsf{Id}_{\mathscr V}\Big).
\eeq
For $i\in I$, define
\[
x_i^+(u)=\sum_{a\in\Z}x_{i,a}^+u^a:=e_{i,i+1}^+(u)-e_{i,i+1}^-(u),
\]
\[
x_i^-(u)=\sum_{a\in\Z}x_{i,a}^-u^a:=f_{i+1,i}^-(u)-f_{i+1,i}^+(u).
\]
The following relations are known from \cite{Zhy97} or \cite[Theorem 3.5]{Zha16},
\begin{subequations}\label{eq:q-relations}
\begin{align}
&[d_j^\star(u),d_k^\pm(w)]=0,\label{eq:dd-q}\\
&[d_j^\star(u),x_i^\pm(w)]=[x_i^\pm(u),x_l^\pm(w)]=0,\quad \text{ for }|i-j|\gge 2,\ |i-l|\gge 2, \label{eq:dx-q>1}\\
&d_i^\star(u)x_i^\pm(w)=\Big(\frac{q_iu-q_i^{-1}w}{u-w}\Big)^{\mp 1}x_i^\pm(w)d_i^\star(u),\label{eq:dx1}\\
&d_{i+1}^\star(u)x_i^\pm(w)=\Big(\frac{q_{i+1}^{-1}u-q_{i+1}w}{u-w}\Big)^{\mp 1}x_i^\pm(w)d_{i+1}^\star(u),\label{eq:dx2}\\
&[x_i^+(u),x_l^-(w)]=\delta_{il}(q_i-q_i^{-1})\delta(u/w)(d_{i+1}^+(u)d_i^+(u)^{-1}-d_{i+1}^-(w)d_i^-(w)^{-1}),\label{eq:xx}
\end{align}
\end{subequations}
where $i,l\in I$, $j,k\in \bar I$,  $\delta(u)=\sum_{n\in\Z}u^n\in \C[[u^{\pm 1}]]$, and $\star$ is either $+$ or $-$.

\begin{lem}\label{lem:zero-mode0}
We have 
\[
x_{i,0}^+=\big(t_{ii}^{(0,+)}\big)^{-1}t_{i,i+1}^{(0,+)},\qquad x_{i,0}^-=t_{i+1,i}^{(0,-)}\big(t_{ii}^{(0,-)}\big)^{-1}.
\]
\end{lem}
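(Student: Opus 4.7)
The plan is to extract the coefficient of $u^0$ from each side of the Gauss decomposition \eqref{eq:Gauss-q} and exploit the triangularity imposed by the relations $t_{ij}^{(0,-)}=t_{ji}^{(0,+)}=0$ for $1\lle i<j\lle m+n$ in \eqref{eq:RTTq}. At the level of constant terms in $u$, the matrix $T^{(0,+)}:=\sum_{i,j} t_{ij}^{(0,+)}\otimes E_{ij}$ is upper triangular, while $T^{(0,-)}$ is lower triangular. Writing \eqref{eq:Gauss-q} mode-by-mode, the constant-term factors $L^{(0,+)}$, $D^{(0,+)}$, $U^{(0,+)}$ give a Gauss decomposition of the upper-triangular matrix $T^{(0,+)}$. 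By uniqueness of the (super) Gauss decomposition the unipotent lower-triangular factor must be the identity, so $f_{ji}^{(0,+)}=0$ for all $j>i$ and $d_k^{(0,+)}=t_{kk}^{(0,+)}$. The symmetric argument for $T^{(0,-)}$ yields $e_{ij}^{(0,-)}=0$ for all $i<j$ and $d_k^{(0,-)}=t_{kk}^{(0,-)}$.

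Next, I would read off the $(i,i+1)$ entry of $LDU$ at order $u^0$ for the $+$ decomposition. Since $L^{(0,+)}=I$, only the $k=i$ term in $(LDU)_{i,i+1}^{(0,+)}=\sum_k L_{ik}^{(0,+)} d_k^{(0,+)} U_{k,i+1}^{(0,+)}$ survives, giving
\[
t_{i,i+1}^{(0,+)}=d_i^{(0,+)}\, e_{i,i+1}^{(0,+)}=t_{ii}^{(0,+)}\, e_{i,i+1}^{(0,+)}.
\]
The RTT relation $t_{ii}^{(0,+)}t_{ii}^{(0,-)}=1$ makes $t_{ii}^{(0,+)}$ invertible, so $e_{i,i+1}^{(0,+)}=\bigl(t_{ii}^{(0,+)}\bigr)^{-1}t_{i,i+1}^{(0,+)}$. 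The analogous computation for the $(i+1,i)$ entry of the $-$ decomposition (now only the $k=i$ term in $\sum_k L_{i+1,k}^{(0,-)} d_k^{(0,-)} U_{ki}^{(0,-)}$ survives since $U^{(0,-)}=I$) produces $f_{i+1,i}^{(0,-)}=t_{i+1,i}^{(0,-)}\bigl(t_{ii}^{(0,-)}\bigr)^{-1}$.

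Finally, I would combine this with the definitions $x_i^+(u)=e_{i,i+1}^+(u)-e_{i,i+1}^-(u)$ and $x_i^-(u)=f_{i+1,i}^-(u)-f_{i+1,i}^+(u)$. The coefficient of $u^0$ in $x_i^+(u)$ is $e_{i,i+1}^{(0,+)}-e_{i,i+1}^{(0,-)}$, and the second term vanishes by the triangularity established in the first step; similarly the coefficient of $u^0$ in $x_i^-(u)$ equals $f_{i+1,i}^{(0,-)}$. Substituting the expressions from the previous paragraph yields the two claimed identities.

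There is essentially no obstacle here: the entire argument is a bookkeeping exercise in the Gauss decomposition, and the only point requiring care is the justification that the zero modes of the off-diagonal $L$ and $U$ factors vanish. This follows cleanly from the uniqueness of Gauss decomposition applied to the triangular constant-term matrices $T^{(0,\pm)}$, together with the invertibility of the Cartan zero modes $t_{ii}^{(0,\pm)}$ provided by \eqref{eq:RTTq}.
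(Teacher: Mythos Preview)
Your proposal is correct and follows essentially the same approach as the paper: both use the triangularity relations $t_{ji}^{(0,+)}=t_{ij}^{(0,-)}=0$ for $i<j$ together with the Gauss decomposition \eqref{eq:Gauss-q} to conclude that $f_{ji}^{(0,+)}=e_{ij}^{(0,-)}=0$, and then read off the constant terms. You are simply more explicit than the paper, which compresses the vanishing of these zero modes into the phrase ``it is not hard to show from Gauss decomposition.''
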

\begin{proof}
Since $t_{ij}^{(0,-)}=t_{ji}^{(0,+)}=0$ for $1\lle i<j\lle m+n$, it is not hard to show from Gauss decomposition \eqref{eq:Gauss-q} that $f_{ji}^+(u)\in u \UqglMNh[[u]]$ and $e_{ij}^-(u)\in u^{-1} \UqglMNh[[u^{-1}]]$. The statement follows that by comparing the constant terms in \eqref{eq:Gauss-q}.
\end{proof}

\begin{lem}[\cite{DF93}]\label{lem:loop-gen}
The quantum affine superalgebra $\UqglMNh$ is generated by the coefficients of $x_i^\pm(u)$ and $d_j^\pm(u)$ for $i\in I$ and $j\in \bar I$.	\qed
\end{lem}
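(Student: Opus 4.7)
The plan is to reduce the problem to showing that every Gauss factor $d_k^\pm(u)$, $e_{ij}^\pm(u)$, and $f_{ji}^\pm(u)$ appearing in \eqref{eq:Gauss-q} lies in the subsuperalgebra $\mc U \subset \UqglMNh$ generated by the coefficients of $x_i^\pm(u)$ and $d_j^\pm(u)$. Once this is established, expanding the matrix product in \eqref{eq:Gauss-q} shows that each RTT generator $t_{ij}^{(a,\pm)}$ is a polynomial expression in the coefficients of the Gauss factors, hence lies in $\mc U$, which gives the lemma.

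First I would recover the ``simple root'' Gauss entries $e_{i,i+1}^\pm(u)$ and $f_{i+1,i}^\pm(u)$ directly from $x_i^\pm(u)$. The relations $t_{ij}^{(0,-)} = t_{ji}^{(0,+)}=0$ for $i<j$, together with \eqref{eq:Gauss-q}, force $e_{ij}^+(u) \in \UqglMNh[[u]]$ and $e_{ij}^-(u) \in u^{-1}\UqglMNh[[u^{-1}]]$, and analogously $f_{ji}^+(u) \in u\UqglMNh[[u]]$, $f_{ji}^-(u)\in \UqglMNh[[u^{-1}]]$. Therefore the definition $x_i^+(u) = e_{i,i+1}^+(u) - e_{i,i+1}^-(u)$ has no overlap between its non-negative and negative Laurent parts, so $e_{i,i+1}^+(u)$ and $e_{i,i+1}^-(u)$ are individually recovered from the coefficients of $x_i^+(u)$, and likewise $f_{i+1,i}^\pm(u)$ from $x_i^-(u)$.

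Second, I would produce all higher Gauss entries $e_{ij}^\pm(u)$, $f_{ji}^\pm(u)$ for $|i-j|\geqslant 2$ by an inductive $q$-commutator procedure, which is the super analogue of the Ding--Frenkel construction. Concretely, using the RTT relations \eqref{eq:RTTq} applied to suitable pairs of matrix entries and then inverting the Gauss decomposition, one obtains recursions of the schematic form
\[
e_{i,j+1}^\pm(u) \,=\, \bigl[\,e_{ij}^\pm(u),\, e_{j,j+1}^{(0,\pm)}\,\bigr]_{q^{\pm s_{j+1}}},\qquad
f_{j+1,i}^\pm(u) \,=\, \bigl[\,f_{j+1,j}^{(0,\pm)},\, f_{ji}^\pm(u)\,\bigr]_{q^{\mp s_{j+1}}},
\]
up to nonzero scalar factors. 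The zero-modes $e_{j,j+1}^{(0,\pm)}$ and $f_{j+1,j}^{(0,\pm)}$ already belong to $\mc U$ by Lemma \ref{lem:zero-mode0} combined with the first step, so inducting on $|i-j|$ places all Gauss entries in $\mc U$.

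The main obstacle is the bookkeeping of parities and the correct $q$-deformation parameters in the recursive $q$-commutators in the super setting; one must verify that the RTT relation \eqref{eq:RTTq} applied to the $(ij,k,k{+}1)$ block indeed yields the claimed identity after passage to Gauss coordinates, and that no extraneous terms survive. This amounts to a computation entirely parallel to the one in \cite{DF93}, with the trigonometric R-matrix replaced by the graded version \eqref{eq:q-R-matrix}, so the same strategy goes through with the parity signs $s_i, s_j$ tracked throughout. Since $e_{mm}^+(u)^2 = 0$ (the odd isotropic simple root), no new identity is needed to close the recursion.
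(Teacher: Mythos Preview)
The paper gives no proof of this lemma: it is stated with a citation to \cite{DF93} and closed immediately by \qed. So there is nothing in the paper to compare your argument against beyond the bare reference. Your outline is precisely the Ding--Frenkel strategy the citation points to, adapted to the graded R-matrix, and is correct in substance: separate $x_i^\pm(u)$ into its $e^\pm$ (resp.\ $f^\pm$) halves by Laurent degree, then build the non-simple Gauss entries by iterated $q$-commutators with zero modes, and finally recover the RTT generators from \eqref{eq:Gauss-q}.

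Two small points to tighten. First, in your recursion you pair $e_{ij}^-(u)$ with the zero mode $e_{j,j+1}^{(0,-)}$, but that mode vanishes (as you yourself observed from the triangularity of $T^\pm(0)$); the correct companion in both $\pm$ cases is $x_{j,0}^+=e_{j,j+1}^{(0,+)}$ (equivalently, one uses the same constant-term generator from Lemma~\ref{lem:zero-mode0} regardless of the sign on the series being built). Second, the final remark ``$e_{mm}^+(u)^2=0$'' is not well-formed---there is no Gauss entry $e_{mm}$, and the nilpotency of the odd simple-root current plays no role in closing the recursion; you can simply drop that sentence.
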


Recall $\gamma_i$ defined in Section \ref{sec:Gauss}.

\begin{lem}
The coefficients of the series $\prod_{j\in\bar I} \big(d^+_{j}(uq^{2\gamma_j})\big)^{s_j}$ are central in $\UqglMNh$.
\end{lem}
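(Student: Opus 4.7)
The plan is to mimic Gow's proof of Lemma \ref{lem:central} by reducing to a scalar computation using the Drinfeld-type relations \eqref{eq:dd-q}--\eqref{eq:xx}. By Lemma \ref{lem:loop-gen}, it suffices to show that $\mathscr D(u):=\prod_{j\in\bar I}\bigl(d_j^+(uq^{2\gamma_j})\bigr)^{s_j}$ commutes with all $d_j^\pm(w)$ and all $x_i^\pm(w)$. Commutation with $d_j^\pm(w)$ is immediate from \eqref{eq:dd-q}. Commutation with $x_i^\pm(w)$ for a fixed $i\in I$ then becomes the crux: by \eqref{eq:dx-q>1}, every factor $d_j^+(uq^{2\gamma_j})$ with $j\notin\{i,i+1\}$ commutes with $x_i^\pm(w)$, so only the two factors indexed by $j=i$ and $j=i+1$ contribute.

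Next, I would apply \eqref{eq:dx1} and \eqref{eq:dx2} to move $\bigl(d_i^+(uq^{2\gamma_i})\bigr)^{s_i}$ and $\bigl(d_{i+1}^+(uq^{2\gamma_{i+1}})\bigr)^{s_{i+1}}$ past $x_i^\pm(w)$. Since raising both sides of a commutation relation of the form $d\,x=R\,x\,d$ to the power $s_j=\pm1$ gives $d^{s_j}x=R^{s_j}x\,d^{s_j}$, one obtains that conjugation produces the scalar
\[
\Phi_i^\pm(u,w)=\Bigl(\tfrac{q_iuq^{2\gamma_i}-q_i^{-1}w}{uq^{2\gamma_i}-w}\Bigr)^{\pm s_i}\Bigl(\tfrac{q_{i+1}^{-1}uq^{2\gamma_{i+1}}-q_{i+1}w}{uq^{2\gamma_{i+1}}-w}\Bigr)^{\pm s_{i+1}}.
\]
Substituting $q_j=q^{s_j}$ and factoring $q^{-s_i}$ out of the first numerator and $q^{s_{i+1}}$ out of the second (using $s_j^2=1$), each factor is reduced to a constant $q^{-1}$ or $q$ times a ratio of the form $(q^{2a}u-w)/(q^{2b}u-w)$. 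The two constants cancel, leaving a purely rational remainder depending on $\gamma_i$, $\gamma_{i+1}$, $s_i$, $s_{i+1}$.

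The final step is a two-case verification using $\gamma_{i+1}=\gamma_i+(s_i+s_{i+1})/2$. When $s_i=s_{i+1}$ (the even simple roots), one has $\gamma_{i+1}=\gamma_i+s_i$, so the first rational factor is $((q^{2\gamma_{i+1}}u-w)/(q^{2\gamma_i}u-w))^{s_i}$ while the second is its reciprocal to the same power, giving $1$. When $s_i=-s_{i+1}$ (i.e.\ $i=m$, the odd simple root), $\gamma_{i+1}=\gamma_i$ and the two rational factors coincide, but the exponents $s_i$ and $s_{i+1}$ are opposite, so again the product is $1$. The $x_i^-$ case is entirely parallel since \eqref{eq:dx1}--\eqref{eq:dx2} have the same form with opposite sign in the exponent. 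The main obstacle is purely bookkeeping of $\pm1$ exponents through the $s_j$-th powers and the case $i=m$ where $\gamma_{i+1}=\gamma_i$; the underlying algebraic content is simply that the shifts $q^{2\gamma_j}$ are chosen precisely so that the two problematic scalars cancel, exactly as in the Yangian case where the shifts $-\gamma_j$ play the same role.
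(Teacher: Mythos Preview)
Your proof is correct and follows exactly the approach the paper intends: the paper's own proof is the single sentence ``The lemma follows from Lemma \ref{lem:loop-gen} and relations \eqref{eq:q-relations},'' and you have simply carried out that computation in detail. One cosmetic slip: from \eqref{eq:dx1}--\eqref{eq:dx2} the exponents in your $\Phi_i^\pm$ should be $\mp s_i$ and $\mp s_{i+1}$ rather than $\pm s_i$ and $\pm s_{i+1}$, but since you are showing the product equals $1$ this is immaterial.
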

\begin{proof}
The lemma follows from Lemma \ref{lem:loop-gen} and relations \eqref{eq:q-relations}.
\end{proof}

Define $\tl t_{ij}^{(a,\pm)}\in \UqglMNh$ for $i,j\in\bar I$ and $a\in \Z_{\gge 0}$ by
\[
\big(T^\pm (u)\big)^{-1}=\sum_{i,j\in\bar I}\sum_{a\gge 0}\tl t_{ij}^{(a,\pm)}u^{\pm a}\otimes E_{ij}\in \UqglMNh\otimes \End(\mathscr V)[[u^{\pm 1}]].
\]
Clearly, $\tl t_{ij}^{(a,\pm)}$ generate $\UqglMNh$.
\begin{lem}\label{lem:q-homo}
	The assignment 
	$$
	\tl t_{ij}^{(a,\pm)}\mapsto \tl t_{i'j'}^{(a,\pm)},\quad i,j\in\bar I,~a\in\Z_{\gge 0},
	$$ extends uniquely to a superalgebra morphism $\phi_r:\UqglMNh\to \mathrm U_q(\widehat\gl_{m'|n})$. Moreover, we have
	\[
	\phi_r: x_i^{\pm}(u)\mapsto x_{i'}^\pm(u),\qquad d_j^\pm(u)\mapsto d_{j'}^\pm(u),\quad \text{ for } ~i\in I,~j\in \bar I.
	\]
\end{lem}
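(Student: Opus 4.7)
The plan is to verify well-definedness of the assignment via the defining relations written in the $\tl T$-variable, and then to compute its action on the Drinfeld currents via the Gauss decomposition. Uniqueness is immediate since the $\tl t_{ij}^{(a,\pm)}$ generate $\UqglMNh$.

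For well-definedness, invert the RTT relations \eqref{eq:RTTq}: from $\mathscr R(u_1/u_2)T_1^\pm(u_1)T_2^\pm(u_2)=T_2^\pm(u_2)T_1^\pm(u_1)\mathscr R(u_1/u_2)$, multiplying by the inverses $\tl T_1^\pm,\tl T_2^\pm$ and rearranging yields $\mathscr R(u_1/u_2)\tl T_2^\pm(u_2)\tl T_1^\pm(u_1)=\tl T_1^\pm(u_1)\tl T_2^\pm(u_2)\mathscr R(u_1/u_2)$, together with the analogous inverted mixed relation. The R-matrix \eqref{eq:q-R-matrix} is \emph{index-local}: its coefficient $\mathscr R_{ij}^{kl}(u)$ vanishes unless $\{i,j\}=\{k,l\}$ as multisets, and its nonzero values depend only on the parities $q_i=q^{s_i}$ and on the ordering of indices. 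Both features are invariant under the shift $i\mapsto i+r$, since $|i|_{m|n}=|i+r|_{m'|n}$ (the prepended indices $\{1,\ldots,r\}$ are all even) and ordering is preserved by translation. Consequently, writing the inverted RTT relations in components and restricting to $\tl t$ entries with indices in the shifted block $\{r+1,\ldots,r+m+n\}$ yields a closed subsystem formally identical to that of $\UqglMNh$. The initial conditions $\tl t_{ij}^{(0,-)}=\tl t_{ji}^{(0,+)}=0$ for $i<j$ and $\tl t_{ii}^{(0,+)}\tl t_{ii}^{(0,-)}=1$ follow from $T^\pm\tl T^\pm=I$ together with the initial conditions on $T^\pm$, so they too are preserved under the shift.

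For the action on Drinfeld currents, the crucial step is a \emph{localization} statement: the Gauss factors $d_k^\pm,e_{k,k+1}^\pm,f_{k+1,k}^\pm$ can be expressed as quasideterminant-type polynomials in the entries $\tl t_{ij}^{(a,\pm)}$ with both indices in $\{k,\ldots,m+n\}$. To see this, from the Gauss decomposition \eqref{eq:Gauss-q} we have $\tl T^\pm=(E^\pm)^{-1}(D^\pm)^{-1}(F^\pm)^{-1}$; because $F^\pm$ is lower unitriangular and $E^\pm$ is upper unitriangular, for $i,j\in J:=\{k,\ldots,m+n\}$ the sum $(\tl T^\pm)_{ij}=\sum_l((E^\pm)^{-1})_{il}(d_l^\pm)^{-1}((F^\pm)^{-1})_{lj}$ is automatically restricted to $l\in J$, so the lower-right principal submatrix $(\tl T^\pm)^J$ equals $(F^{\pm,J}D^{\pm,J}E^{\pm,J})^{-1}$. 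Inverting back, the top-left entry of $(\tl T^\pm)^{J,-1}$ is $d_k^\pm(u)$, giving $(d_k^\pm(u))^{-1}$ as the $(k,k)$-quasideterminant of $(\tl T^\pm)^J$, and analogous quasideterminant formulas express $e_{k,k+1}^\pm(u)$ and $f_{k+1,k}^\pm(u)$ in the same lower-right submatrix. Under $\phi_r$ the submatrix $(\tl T^\pm)^J$ maps to $(\tl T'^\pm)^{\{k',\ldots,m+n+r\}}=(\tl T'^\pm)^{\{k',\ldots,m'+n\}}$ in $\mathrm U_q(\widehat\gl_{m'|n})$, which by the same formulas in the larger algebra produces $d_{k'}^\pm,e_{k',k'+1}^\pm,f_{k'+1,k'}^\pm$. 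Hence $\phi_r(d_k^\pm(u))=d_{k'}^\pm(u)$, $\phi_r(e_{k,k+1}^\pm(u))=e_{k',k'+1}^\pm(u)$, $\phi_r(f_{k+1,k}^\pm(u))=f_{k'+1,k'}^\pm(u)$, and by the definition of $x_k^\pm$ also $\phi_r(x_k^\pm(u))=x_{k'}^\pm(u)$.

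The main technical obstacle is rigorously establishing the quasideterminant formulas for the off-diagonal Gauss entries $e_{k,k+1}^\pm,f_{k+1,k}^\pm$ in terms of $(\tl T^\pm)^J$, which requires tracking the inversion of $(F^{\pm,J}D^{\pm,J}E^{\pm,J})$ together with the super signs arising from the $\Z_2$-grading of $\UqglMNh$. Once this localization is in place, the rest of the argument is essentially a formal change of variables under the parity-preserving index shift; the analogous fact for the Yangian is implicit in \cite[Lemma~2]{Gow07}.
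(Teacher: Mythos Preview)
Your proof is correct and follows essentially the same approach as the paper's: invert the RTT relations and restrict to the shifted block to establish well-definedness, then use the Gauss decomposition to compute the action on the Drinfeld currents. The paper's own proof is a two-line sketch citing \cite[(5.39--5.41)]{DF93} for the strategy; your quasideterminant/localization argument makes explicit what ``follows from the Gauss decomposition'' means, and the block computation $(\tl T^\pm)^J=(E^{\pm})^{-1,J}(D^{\pm})^{-1,J}(F^{\pm})^{-1,J}$ together with the observation that lower-right blocks of triangular matrices commute with inversion is exactly the mechanism underlying the Ding--Frenkel shift.
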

\begin{proof}
	The statement follows from the same strategy of \cite[(5.39-5.41)]{DF93}. More precisely, the first statement follows from the defining relations \eqref{eq:RTTq} for $\mathrm U_q(\gl_{m'|n})$ by taking inverse to $T_1^\pm(u)$, $T_2^\pm(u)$ and restricting to $E_{i'j'}\otimes E_{k'l'}$ for $i,j,k,l\in\bar I$. The second statement then follows from the Gauss decomposition.
\end{proof}

\subsection{Evaluation morphisms and $\ell$-weights}
The quantum superalgebra $\UqglMN$ has another presentation as follows, see \cite[Section 3.1.2]{Zha16}. Let $\mathscr R=\mathscr R(1)$, see \eqref{eq:q-R-matrix}. 

Let $\mathscr U_q(\glMN)$ be the unital associative superalgebra generated by $t_{ij}^\pm$, for $1\lle i<j\lle m+n$, with parity $|i|+|j|$ and with the relations in $\mathscr U_q(\glMN)\otimes \End(\mathscr V^{\otimes 2})$
\[
\mathscr RT_1^\pm T_2^\pm =T_2^\pm T_1^\pm \mathscr R,\quad  \mathscr RT_1^+ T_2^- =T_2^- T_1^+ \mathscr R,\quad t^+_{ii}t^-_{ii}=t^-_{ii}t^+_{ii}=1,
\] 
where $T^+ = \sum_{i\lle j}t^+_{ij}\otimes E_{ij}$ and $T^- = \sum_{i\lle j}t^-_{ji}\otimes E_{ij}$ $\in \mathscr U_q(\glMN)\otimes \End(\mathscr V)$. The superalgebras $\UqglMN$ and $\mathscr U_q(\glMN)$ are isomorphic via the isomorphism extended by the assignments
\[
e_i^+\mapsto \frac{(t_{ii}^+)^{-1}t_{i,i+1}^+}{1-q_i^{-2}},\quad e_i^-\mapsto \frac{t_{i+1,i}^-(t_{ii}^-)^{-1}}{1-q_i^{2}},\quad t_j^{s_j}\mapsto t_{jj}^+=(t_{jj}^-)^{-1}.
\]

The assignment 
\[
t_{ij}^\pm (u)\mapsto \frac{t_{ij}^\pm-u^{\pm 1}t_{ij}^\mp}{1-u^{\pm 1}}
\]
uniquely extends to a superalgebra homomorphism $\varpi_{m|n}:\UqglMNh\to \mathscr U_q(\glMN)\cong \UqglMN$. We call $\varpi_{m|n}$ the \emph{evaluation morphism}. Let $M$ be a $\UqglMN$-module. We call the module obtained by pulling back $M$ through the evaluation morphism an \emph{evaluation module} and denote it again by $M$.

The assignment $t_{ij}^\pm\mapsto t_{ij}^{(0,\pm)}$ defines a superalgebra morphism $\iota:\mathscr U_q(\glMN)\to \UqglMNh$. Moreover, we clearly have $\varpi_{m|n}\circ \iota =\mathsf{Id}_{\mathscr U_q(\glMN)}$. Hence $\mathscr U_q(\glMN)\cong \UqglMN$ is identified as a subalgebra of $\UqglMNh$ which is invariant under the evaluation morphism.

\begin{lem}\label{lem:zero-mode}
Identifying	$\UqglMN$ as a subalgebra of $\UqglMNh$, we have $x_{i,0}^\pm=(1-q_i^{\mp 2})e_{i}^\pm$.
\end{lem}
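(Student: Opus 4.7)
The plan is to chase the constant term of $x_i^{\pm}(u)$ through the Gauss decomposition and then translate the answer back to the RTT generators via the identification $\UqglMN \cong \mathscr U_q(\glMN) \hookrightarrow \UqglMNh$.

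First I would recall that $x_i^{+}(u) = e_{i,i+1}^+(u) - e_{i,i+1}^-(u)$ and $x_i^{-}(u) = f_{i+1,i}^-(u) - f_{i+1,i}^+(u)$. By the argument used in the proof of Lemma \ref{lem:zero-mode0} (which rests on $t_{ij}^{(0,-)}=t_{ji}^{(0,+)}=0$ for $i<j$ together with \eqref{eq:Gauss-q}), the negative-mode series $e_{i,i+1}^-(u)$ lies in $u^{-1}\UqglMNh[[u^{-1}]]$ and the positive-mode series $f_{i+1,i}^+(u)$ lies in $u\UqglMNh[[u]]$. Consequently their $u^0$-coefficients vanish, and
\[
x_{i,0}^{+} \;=\; e_{i,i+1}^{(0,+)}, \qquad x_{i,0}^{-} \;=\; f_{i+1,i}^{(0,-)}.
\]

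Next I would invoke Lemma \ref{lem:zero-mode0} directly to rewrite these constants in terms of RTT generators:
\[
x_{i,0}^{+} \;=\; \bigl(t_{ii}^{(0,+)}\bigr)^{-1} t_{i,i+1}^{(0,+)}, \qquad x_{i,0}^{-} \;=\; t_{i+1,i}^{(0,-)}\bigl(t_{ii}^{(0,-)}\bigr)^{-1}.
\]
Under the embedding $\iota\colon \mathscr U_q(\glMN)\to \UqglMNh$ the generators $t_{ij}^{\pm}$ are identified with $t_{ij}^{(0,\pm)}$, so the right-hand sides become $(t_{ii}^+)^{-1}t_{i,i+1}^{+}$ and $t_{i+1,i}^{-}(t_{ii}^-)^{-1}$ respectively, viewed inside $\mathscr U_q(\glMN)$.

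Finally, I would apply the explicit isomorphism $\UqglMN\cong \mathscr U_q(\glMN)$ recalled just before the lemma, namely
\[
e_i^+ \mapsto \frac{(t_{ii}^+)^{-1}t_{i,i+1}^+}{1-q_i^{-2}}, \qquad e_i^- \mapsto \frac{t_{i+1,i}^-(t_{ii}^-)^{-1}}{1-q_i^{2}},
\]
to read off $(t_{ii}^+)^{-1}t_{i,i+1}^{+}=(1-q_i^{-2})e_i^+$ and $t_{i+1,i}^{-}(t_{ii}^-)^{-1}=(1-q_i^{2})e_i^-$. Substituting into the expressions for $x_{i,0}^{\pm}$ yields the claim $x_{i,0}^{\pm}=(1-q_i^{\mp 2})e_i^{\pm}$.

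There is no real obstacle here; the argument is essentially bookkeeping. The only point to watch is the sign convention in the two sides of the Gauss decomposition (so that the vanishing of the constant terms really goes the way claimed) and the direction of the identification $\iota$, since a wrong sign in $q_i$ versus $q_i^{-1}$ would flip the answer. Once Lemma \ref{lem:zero-mode0} is in hand, the conclusion is immediate.
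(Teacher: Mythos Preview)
Your argument is correct and is exactly the paper's approach: invoke Lemma~\ref{lem:zero-mode0} to get $x_{i,0}^{\pm}$ in terms of the RTT zero modes, then use the identification $\iota$ and the explicit isomorphism $\UqglMN\cong\mathscr U_q(\glMN)$ to rewrite these as $(1-q_i^{\mp 2})e_i^{\pm}$. You have simply spelled out in detail what the paper compresses into one line.
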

\begin{proof}
The lemma follows from Lemma \ref{lem:zero-mode0} and the observations above.	
\end{proof}

Set $\mathcal B:=\C[[u^{-1}]]^\times$ and $\mathfrak B:=\mathcal B^{\bar I}$. We call an element $\bm\zeta\in \mathfrak B$ an \emph{$\ell$-weight}. We write $\ell$-weights in the form $\bm\zeta=(\zeta_i(u))_{i\in \bar I}$, where $\zeta_i(u)\in \mathcal B$ for all $i\in \bar I$. 

Let $M$ be a $\UqglMNh$-module. We say that a nonzero vector $v\in M$ is \emph{of $\ell$-weight $\bm \zeta$} if $d_{i}^+(u)v=\zeta_i(u)v$ for $i\in \bar I$. We say that a vector $v\in M$ is a \emph{highest $\ell$-weight vector of $\ell$-weight $\bs\zeta$} if $v$ is of $\ell$-weight $\bs\zeta$ and $x_i^+(u)v=0$ for all $i\in I$. 

Let $M$ be a finite-dimensional $\UqglMNh$-module and $\bm\zeta\in \mathfrak B$ an $\ell$-weight. Let
\[
\zeta_i(u)=\sum_{j\gge 0} \zeta_{i,j}u^{-j},\qquad \zeta_{i,0}\in \C^\times,~\zeta_{i,j}\in \C.
\]
Denote by $M_{\bm \zeta}$ the \emph{generalized $\ell$-weight space} corresponding to the $\ell$-weight $\bm\zeta$,
\[
M_{\bm\zeta}:=\{v\in M~|~  (d_{i,j}^+-\zeta_{i,j})^{\dim M} v=0 \text{ for all }i\in \bar I, \ j\in \Z_{\gge 0}\}.
\]
We call $M$ \emph{thin} if $\dim(M_{\bm\zeta})\lle 1$ for all $\bm\zeta\in\mathfrak B$. In particular, if $M$ is thin, then the actions of $d_i^+(u)$ are simultaneously diagonalizable.

\begin{eg}
Let $\la$ be a $\glMN$-weight. Then the evaluation module $\mathscr L(\la)$ has the highest $\ell$-weight $\bm \zeta$,
\[
\zeta_i(u)=\frac{q^{\la_i}-uq^{-\la_i}}{1-u},\quad \zeta_j(u)=\frac{q^{-\la_j}-uq^{\la_j}}{1-u},\quad 1\lle i\lle m,\quad m+1\lle j\lle m+n.\qedd
\] 
\end{eg}

\subsection{Skew representations}
Recall that we have a fixed $r\in\Z_{\gge 0}$ and $k':=r+k$ for all $k\in\Z$. We identify $\mathrm{U}_q(\gl_r)$ as a subalgebra of $\mathrm{U}_q(\gl_{m'|n})$ via the natural embedding $e^\pm_{i}\mapsto e^\pm_i$ and $t_j^{\pm 1}\mapsto t_j^{\pm 1}$. The quantum superalgebra $\UqglMN$ is identified as a subalgebra of $\mathrm{U}_q(\gl_{m'|n})$ via the embedding $e_i^\pm\to e_{i'}^\pm$ and $t_j^{\pm 1}\mapsto t_{j'}^{\pm 1}$. Clearly, the two subalgebras $\mathrm{U}_q(\gl_r)$ and $\UqglMN$ commute with each other.

Let $\la=(\la_1,\dots,\la_{m'+n})$ be an $(m'|n)$-covariant weight and $\mu=(\mu_1,\dots,\mu_r)$ a $(r|0)$-covariant weight. Let $\mathscr L(\la/\mu)$ be the subspace of $\mathscr L(\la)$ given by
\[
\mathscr L(\la/\mu):=\{v\in\mathscr L(\la)~|~t_iv=q^{\mu_i}v, e_j^+v=0,\text{ for }1\lle i\lle r, 1\lle j<r \}.
\]
It is clear that $\mathscr L(\la/\mu)$ is a $\UqglMN$-module and a $\mathrm U_q(\gl_{m'|n})^{\mathrm{U}_q(\gl_r)}$-module. It follows from Theorem \ref{thm:q-GT-formulas} that $\mathscr L(\la/\mu)$ admits a basis $\xi_{\La}$ parameterized by all $\la/\mu$-admissible GT tableaux $\La$. Moreover,
\beq\label{eq:matrix-q}
e_k^\pm\xi_{\La}=\sum_{i=1}^{k'}[\mathscr E^\pm_{\La,ki}]\ \xi_{\La\pm\delta_{ki}},
\eeq
where $[\mathscr E^\pm_{\La,ki}]$ are obtained from $\mathscr E^\pm_{\La,ki}$ by replacing all numbers appearing in $\mathscr E^\pm_{\La,ki}$ from Theorem \ref{thm:gt-formula} to their associated $q$-numbers.

The quantum superalgebra $\mathrm U_q(\widehat\gl_r)$ is identified as a subalgebra of $\mathrm U_q(\widehat\gl_{m'|n})$ via the natural embedding $t_{ij}^{(a,\pm)}\to t_{ij}^{(a,\pm)}$. Recall $\phi_r$ from Lemma \ref{lem:q-homo}.

\begin{lem}\label{lem:q-supercommute}
The image of $\UqglMNh$ under $\phi_r$ in $\mathrm U_q(\widehat\gl_{m'|n})$ supercommutes with the subalgebra $\mathrm U_q(\widehat\gl_r)$.
\end{lem}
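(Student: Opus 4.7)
The plan is to reduce the supercommutation to a check on Drinfeld-type generators of each subalgebra and then invoke the commutation relations \eqref{eq:dd-q}--\eqref{eq:xx}.

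First, I would identify the generators on both sides. By Lemma~\ref{lem:loop-gen} applied to $\gl_{r}$, the subalgebra $\mathrm U_q(\widehat\gl_{r})$ is generated by the coefficients of the Drinfeld series $d_j^{\pm}(u)$ for $1\le j\le r$ and $x_i^{\pm}(u)$ for $1\le i\le r-1$. Because the Gauss decomposition \eqref{eq:Gauss-q} is uni-triangular, passing to the top-left $r\times r$ block of $T^{\pm}(u)$ inside $\mathrm U_q(\widehat\gl_{m'|n})$ reproduces the Gauss decomposition of $T^{\pm}(u)$ for $\mathrm U_q(\widehat\gl_{r})$; hence the series $d_j^{\pm}(u)$ for $j\le r$ and $x_i^{\pm}(u)$ for $i\le r-1$ defined inside $\mathrm U_q(\widehat\gl_{r})$ coincide with the same-named series of $\mathrm U_q(\widehat\gl_{m'|n})$ under the natural embedding. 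In parallel, by Lemma~\ref{lem:q-homo} together with Lemma~\ref{lem:loop-gen}, the image $\phi_{r}(\UqglMNh)$ is generated by the coefficients of $d_l^{\pm}(u)$ for $r+1\le l\le m'+n$ and $x_k^{\pm}(u)$ for $r+1\le k\le m'+n-1$.

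Second, I would verify pairwise (super)commutation on these Drinfeld generators. Since $\mathrm U_q(\widehat\gl_{r})$ is purely even, supercommutation reduces to ordinary commutation. The pairs $d_j^{\pm}(u)$ versus $d_l^{\pm}(u)$ commute by \eqref{eq:dd-q}. The pairs $x_i^{\pm}(u)$ versus $x_k^{\pm}(u)$ of the same sign commute by \eqref{eq:dx-q>1} since $k-i\ge 2$. The pair $x_i^{+}(u)$ versus $x_k^{-}(u)$ commutes by \eqref{eq:xx}, whose right-hand side vanishes thanks to $\delta_{ik}=0$. For the mixed Cartan-root pairs $d_j^{\pm}(u)$ with $j\le r$ against $x_k^{\pm}(u)$ with $k\ge r+1$ (and symmetrically $x_i^{\pm}(u)$ against $d_l^{\pm}(u)$ with $i\le r-1$, $l\ge r+1$), the only nontrivial relations picked out by \eqref{eq:dx1}--\eqref{eq:dx2} involve the index pattern $j\in\{k,k+1\}$; since $j\le r<r+1\le k$, we have $j\notin\{k,k+1\}$ and the pair commutes.

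The main obstacle, as far as I can see, is the boundary case $j=r$, $k=r+1$ in the $d$-versus-$x$ pairing, which is not literally covered by the ``$|i-j|\ge 2$'' condition in \eqref{eq:dx-q>1}. This must be handled by the elimination argument above: the only nontrivial relations in \eqref{eq:dx1}--\eqref{eq:dx2} for $x_k^{\pm}(u)$ single out the two series $d_{k}^{\pm}(u)$ and $d_{k+1}^{\pm}(u)$, so every other $d_j^{\pm}(u)$ commutes with $x_k^{\pm}(u)$. With that point clarified, the proof is a direct quantum analogue of the argument sketched after \eqref{eq:reltationdx} for the Yangian.
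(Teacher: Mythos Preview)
Your proposal is correct and follows essentially the same route as the paper: identify Drinfeld-type generators on each side via Lemma~\ref{lem:loop-gen} and Lemma~\ref{lem:q-homo}, then check pairwise commutation using \eqref{eq:q-relations}. You are in fact slightly more careful than the paper, which simply cites \eqref{eq:q-relations}; you isolate the boundary case $j=r$, $k=r+1$ not literally covered by the ``$|i-j|\ge 2$'' clause of \eqref{eq:dx-q>1} and dispose of it by noting that the only nontrivial $d$--$x$ relations are \eqref{eq:dx1}--\eqref{eq:dx2}, so $d_{k-1}^\pm(u)$ commutes with $x_k^\pm(w)$.
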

\begin{proof}
The subalgebra $\mathrm U_q(\widehat\gl_r)$ of $\mathrm U_q(\widehat\gl_{m'|n})$ is generated by coefficients of $x_i^{\pm}(u)$ and $d_j^{\pm}(u)$ for $1\lle i\lle r-1$ and $1\lle j\lle r$. The image of $\UqglMNh$ under $\phi_r$ is generated by coefficients of $x_{i'}^{\pm}(u)$ and $d_{j'}^{\pm}(u)$ for $i\in I$ and $j\in \bar I$. Now the lemma follows from \eqref{eq:q-relations}.
\end{proof}

\begin{rem}
Using Lemma \ref{lem:q-homo}, the lemma reduces to show that $t_{ij}^{\pm}(u)$ supercommutes with $t_{kl}^{\pm }(u)$ for $1\lle i,j\lle r$ and $1'\lle k,l\lle m'+n$ in $\mathrm U_q(\widehat\gl_{m'|n})$. This can be shown directly from \eqref{eq:RTTq}, see e.g. \cite[equation (5)]{Gow07}.\qed
\end{rem}

It follows from Lemma \ref{lem:q-supercommute} that the image of the homomorphism
\[
\varpi_{m'|n}\circ \phi_r:\UqglMNh \to \mathrm U_q(\gl_{m'|n})
\]
supercommutes with the subalgebra $\mathrm U_q(\gl_r)$ in $\mathrm U_q(\gl_{m'|n})$. This implies that the subspace $\mathscr L(\la/\mu)$ is invariant under the action of the image of $\varpi_{m'|n}\circ \phi_r$. Therefore, $\mathscr L(\la/\mu)$ is a $\UqglMNh$-module. We call $\mathscr L(\la/\mu)$ a \emph{skew representaton} of $\UqglMNh$.

\begin{rem}
Note that skew representations $\mathscr L(\la/\mu)$ can also be defined using a Lie superalgebra $\gl_{r_1|r_2}$ instead of a Lie algebra $\gl_r$, see \cite[Section 3]{LM20}. However, the associated skew representation essentially depends on the shape of the skew Young diagram $\Gamma_{\la/\mu}$, see \cite[Remark 3.7]{LM20}. Here we only treat the case of $\gl_r$ for simplicity.\qed
\end{rem}

Define the series $\mathcal C_k(u)$, for $0\lle k\lle m+n$, in $\mathrm U_q(\widehat\gl_{m'|n})$,
\[
\mathcal C_k(u):=\prod_{i=1}^r d_i^+(uq^{-2(r-i+1)})\prod_{j=1}^k\big(d_{j'}^+(uq^{2\gamma_j})\big)^{s_j}
\] 
For a $\la/\mu$-admissible GT tableau $\La$, define rational functions $\mathscr Y_{\La,k}$, for $0\lle k\lle m+n$, by
\[
\mathscr Y_{\La,k}(u)=\prod_{i=1}^r\Big(q^{\la_{k'i}}\frac{1-uq^{-2l_{ki}}}{1-uq^{-2(r-i+1)}}\Big)\prod_{j=1}^k\Big(q_j^{\la_{k'j'}}\frac{1-uq^{-2l_{kj'}}}{1-uq^{2\gamma_j}}\Big)^{s_j}.
\]
Define $\zeta_{\La,k}(u)$ by
\beq\label{eq:l-weight-q}
\zeta_{\La,k}(u)=\left(\frac{\mathscr Y_{\La,k}(uq^{-2\gamma_{k}})}{\mathscr Y_{\La,k-1}(uq^{-2\gamma_{k}})}\right)^{s_k}.
\eeq
and set $\bm\zeta_{\La}=(\zeta_{\La,1}(u),\dots,\zeta_{\La,m+n}(u))$.

\begin{lem}\label{lem:main-q}
We have $\mathcal C_k(u)\xi_{\La}=\mathscr Y_{\La,k}(u)\xi_{\La}$ for $0\lle k\lle m+n$. Moreover, the vector $\xi_{\La}\in \mathscr L(\la/\mu)$ is of $\ell$-weight $\bm\zeta_{\La}$. In particular, the skew representation $\mathscr L(\la/\mu)$ is thin. 
\end{lem}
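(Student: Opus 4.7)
The plan is to mirror the three-step structure of the Yangian case (Lemmas \ref{lem:A-action}, \ref{lem:l-weight}, and Corollary \ref{cor:thin}), adapted to the multiplicative conventions of the quantum affine setting.

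First, I would prove $\mathcal C_k(u)\xi_\La = \mathscr Y_{\La,k}(u)\xi_\La$ by identifying $\mathcal C_k(u)$ with a central series inside a nested sub-superalgebra of $\mathrm U_q(\widehat\gl_{m'|n})$. Set $\mathcal G_k = \gl_{k'}$ if $k \le m$ and $\mathcal G_k = \gl_{m'|k-m}$ if $k > m$, so that one has a chain of inclusions $\mathrm U_q(\widehat\gl_r) \hookrightarrow \mathrm U_q(\widehat{\mathcal G_k}) \hookrightarrow \mathrm U_q(\widehat\gl_{m'|n})$. The centrality statement of Section \ref{sec q} (the quantum analog of Lemma \ref{lem:central}), applied inside $\mathrm U_q(\widehat{\mathcal G_k})$, shows that $\mathcal C_k(u)$, up to the prescribed shifts by $q^{-2(r-i+1)}$ and $q^{2\gamma_j}$, lies in the center of $\mathrm U_q(\widehat{\mathcal G_k})$. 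Hence it acts by a scalar on each $\mathrm U_q(\mathcal G_k)$-irreducible constituent of $\mathscr L(\la)$. Since $\xi_\La$ lies in the $\mathrm U_q(\mathcal G_k)$-irreducible constituent of highest weight $\la^{(k)} = (\la_{k'1},\dots,\la_{k',k'})$, this scalar equals the eigenvalue on that constituent's highest weight vector, which one then computes through $\varpi_{m'|n}$ as an evaluation-module calculation, recovering the stated product formula for $\mathscr Y_{\La,k}(u)$.

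Second, the $\ell$-weight identity follows from the ratio $\mathcal C_k(u)/\mathcal C_{k-1}(u) = (d_{k'}^+(uq^{2\gamma_k}))^{s_k}$ in $\mathrm U_q(\widehat\gl_{m'|n})$, combined with $\phi_r(d_k^+(u)) = d_{k'}^+(u)$ from Lemma \ref{lem:q-homo}. Substituting $u \mapsto uq^{-2\gamma_k}$ and invoking the first identity yields $(d_k^+(u))^{s_k}\xi_\La = (\mathscr Y_{\La,k}/\mathscr Y_{\La,k-1})(uq^{-2\gamma_k})\,\xi_\La$, which rearranges to $d_k^+(u)\xi_\La = \zeta_{\La,k}(u)\xi_\La$ using $s_k = \pm 1$ and \eqref{eq:l-weight-q}. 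Thinness then follows from injectivity of $\La \mapsto \bm\zeta_\La$ on $\la/\mu$-admissible tableaux, which I would establish via a multiplicative analog of Lemma \ref{lem:for-thin}: a unique extremal root among the factors $1-uq^{-2l_{ki}}$ (viewed in the variable $u$) pins down $\la_{k'1}$, and the remaining rows are peeled off recursively.

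The main obstacle is the first step: converting abstract centrality into the explicit product formula for $\mathscr Y_{\La,k}(u)$ demands careful bookkeeping of the shifts $q^{-2(r-i+1)}$ and $q^{2\gamma_j}$, together with verifying that $\varpi_{m'|n}$ produces the correct multiplicative factors $q^{\la_{k'i}}(1-uq^{-2l_{ki}})$ on the highest weight vector of the evaluation $\mathrm U_q(\mathcal G_k)$-module. One must also check that the $k \le m$ and $k > m$ regimes glue consistently at $k = m$, exactly as dictated by the recursion for $\gamma_j$ in Section \ref{sec:Gauss}. Once this shift algebra is in place, the ratio identity and the thinness argument follow by routine manipulation, paralleling the references \cite{NT98} and \cite{FM02} invoked for the Yangian proof.
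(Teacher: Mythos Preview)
Your proposal is correct and follows essentially the same approach as the paper, which simply states that the proofs are similar to those of Lemmas \ref{lem:A-action}, \ref{lem:l-weight}, and Corollary \ref{cor:thin}. You have supplied the natural quantum-affine adaptation of each of those three steps, with the expected multiplicative shifts replacing the additive ones, and your use of the genericity of $q$ to transfer Lemma \ref{lem:for-thin} to the multiplicative setting is exactly what is needed for thinness.
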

\begin{proof}
The proofs of statements are similar to those of Lemmas \ref{lem:A-action},  \ref{lem:l-weight}, and Corollary \ref{cor:thin}. 	
\end{proof}

For each $i\in I$ and $a\in\C$, define the \emph{simple $\ell$-root} $A_{i,a}\in \mathfrak B$ by
\[
(A_{i,a})_j(u)=\frac{u-a}{q^{(\alpha_i,\epsilon_j)}u-q^{-(\alpha_i,\epsilon_j)}a},\qquad j\in \bar I.
\]
The following proposition will be used in the proof of one of our main results.

\begin{prop}[{\cite[Proposition 3.1]{You15}}]\label{prop:main-q}
Let $V$	be a finite-dimensional $\UqglMNh$-module. Pick and fix any $i\in I$. Let $(\bm \mu,\bm \nu)$ be a pair of $\ell$-weights of $V$ such that $x_{i,j}^{\pm}(V_{\bm\mu})\cap V_{\bm\nu}\ne \{0\}$ for some $j\gge 1$. Then:
\begin{enumerate}
\item $\bm \nu=\bm\mu A_{i,a}^{\pm 1}$ for some $a\in \C^\times $,
\item there exist bases $(v_k)_{1\lle k\lle \dim(V_{\bm\mu})}$ of $V_{\bm\mu}$ and $(w_l)_{1\lle l\lle \dim(V_{\bm\nu})}$ of $V_{\bm\nu}$, and complex polynomials $P_{k,l}(z)$ of degree $\lle k+l -2$ such that
\[
(x_i^{\pm}(z)v_k)_{\bm \nu}=\sum_{l =1}^{\dim(V_{\bm \nu})} w_l P_{k,l}(\pa_a)\delta(a/z).\qedd
\]
\end{enumerate}
\end{prop}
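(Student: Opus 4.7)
The plan is to adapt the proof of Proposition \ref{prop:main}---the Yangian analogue carried out above---to the quantum affine super setting, using the commutation relations \eqref{eq:dx1}--\eqref{eq:dx2} in place of \eqref{eq:dx+}. First, I would fix triangularization bases $(v_k)_{1\lle k\lle \dim V_{\bm\mu}}$ of $V_{\bm\mu}$ and $(w_l)_{1\lle l\lle \dim V_{\bm\nu}}$ of $V_{\bm\nu}$ with respect to which each $d_j^+(u)$ acts upper-triangularly with diagonal $\mu_j(u)$ on $V_{\bm\mu}$ and lower-triangularly with diagonal $\nu_j(u)$ on $V_{\bm\nu}$; such bases exist because the coefficients of the $d_j^+(u)$ commute among themselves by \eqref{eq:dd-q}. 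Expand
\[
(x_i^+(z)v_k)_{\bm\nu}=\sum_{l=1}^{\dim V_{\bm\nu}} \lambda_{k,l}(z)\,w_l,\qquad \lambda_{k,l}(z)\in\C[[z,z^{-1}]].
\]

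Next, I would clear denominators in \eqref{eq:dx1}, \eqref{eq:dx2} to obtain, for every $j\in\bar I$, an identity of the form
\[
(u-w)\,d_j^+(u)\,x_i^+(w)=\bigl(q^{(\epsilon_j,\alpha_i)}u-q^{-(\epsilon_j,\alpha_i)}w\bigr)\,x_i^+(w)\,d_j^+(u),
\]
apply it to $x_i^+(z)\bigl(d_j^+(u)-\mu_j(u)\bigr)v_k$, project the result into $V_{\bm\nu}$, and take the $w_l$-component. This yields a quantum analogue of \eqref{eq:wl} relating $\lambda_{k,l}(z)$, $\lambda_{k,l}(u)$, and the triangularization data $\xi_j^{k,k'}(u)$, $\zeta_j^{l',l}(u)$.

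For part (1), pick the smallest $k_0$ with $(x_i^+(z)v_{k_0})_{\bm\nu}\ne 0$ and then the smallest $l_0$ with $\lambda_{k_0,l_0}(z)\ne 0$; the recursion collapses to a two-parameter functional equation of the schematic form
\[
\bigl(q^{(\epsilon_j,\alpha_i)}u-q^{-(\epsilon_j,\alpha_i)}z\bigr)\nu_j(u)\lambda_{k_0,l_0}(z)=(u-z)\mu_j(u)\lambda_{k_0,l_0}(z)+(\text{exchange of }u,z).
\]
Using the $\delta$-function identity $f(u)\delta(a/z)=f(a)\delta(a/z)$ and comparing leading terms in $u$ forces $\lambda_{k_0,l_0}(z)=p_0\,\delta(a/z)$ for some $p_0\in\C$ and a unique $a\in\C^\times$, whence $\nu_j(u)/\mu_j(u)=(u-a)/(q^{(\epsilon_j,\alpha_i)}u-q^{-(\epsilon_j,\alpha_i)}a)=(A_{i,a})_j(u)$, giving $\bm\nu=\bm\mu A_{i,a}$. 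For part (2), I would induct on $k+l$: substituting $\bm\nu=\bm\mu A_{i,a}$ back into the projected identity yields a difference equation for $\lambda_{k,l}(z)$ whose inhomogeneous part is controlled by the strictly smaller data $\lambda_{k',l'}$ with $(k',l')$ below $(k,l)$, and whose homogeneous part is annihilated by $\bigl(q^{(\epsilon_j,\alpha_i)}z-q^{-(\epsilon_j,\alpha_i)}a\bigr)^N$ for suitable $N$; the kernel of such operators on $\C[[z,z^{-1}]]$ is spanned by $\pa_a^s\delta(a/z)$, so induction produces polynomials $P_{k,l}$ of degree $\lle k+l-2$ as required. The $x_i^-$ case is entirely parallel after exchanging the roles of $\bm\mu$ and $\bm\nu$.

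The main obstacle is the distributional nature of $x_i^\pm(u)$: unlike in the Yangian setting, where $x_i^\pm(u)\in u^{-1}\C[[u^{-1}]]$ and the solutions of the ambient functional equation are rational functions of $z$ with a single pole at $z=a$, here $x_i^\pm(u)$ is doubly infinite and the relevant solution space inside $\C[[z,z^{-1}]]$ consists of finite linear combinations of $\pa_a^s\delta(a/z)$. Pinning down $a$ correctly from the trigonometric factors $q^{(\epsilon_j,\alpha_i)}u-q^{-(\epsilon_j,\alpha_i)}w$, and tracking the degree bound $\lle k+l-2$ through the inductive step, is where the care is required; otherwise the argument is a faithful translation of the rational case.
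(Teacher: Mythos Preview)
The paper does not supply its own proof of this proposition: it is quoted verbatim from \cite[Proposition~3.1]{You15} and closed with a $\square$. Your proposal---transporting the argument of Proposition~\ref{prop:main} to the trigonometric setting by replacing \eqref{eq:dx+} with the cleared forms of \eqref{eq:dx1}--\eqref{eq:dx2} and replacing rational solutions $1/(z-a)$ by formal $\delta$-distributions---is exactly the method of \cite{You15}, and the paper itself remarks (in the proof of Proposition~\ref{prop:main}) that the Yangian argument is ``very similar to that of \cite{You15}''. So your plan is correct and is the intended route; there is simply nothing in the paper to compare it against beyond the citation.

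One small point of care in your sketch: when you ``clear denominators'' in \eqref{eq:dx1}, make sure the linear factor lands on the correct side. For $x_i^+$ the relation reads $(q_iu-q_i^{-1}w)\,d_i^+(u)x_i^+(w)=(u-w)\,x_i^+(w)d_i^+(u)$, not the version you wrote with the roles of the two linear factors swapped; getting this right is what makes the ratio $\nu_j(u)/\mu_j(u)$ come out as $(A_{i,a})_j(u)$ rather than its inverse. Apart from that bookkeeping, the induction on $k+l$ and the identification of the homogeneous solution space with $\mathrm{span}\{\partial_a^s\delta(a/z)\}$ go through as you describe.
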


\subsection{Main results}
Now we are ready to formulate and prove the main results of this section.
\begin{thm}\label{thm:q-main}
We have 
$$
d_k^+(u)\xi_{\La}=\zeta_{\La,k}(u)\xi_{\La},
$$
$$
x_k^{\pm}(u)\xi_{\La}=(1-q_i^{\mp 2})\sum_{i=1}^{k'}[\mathscr E_{\La,ki}^{\pm}]\delta(q^{2(l_{ki}+s_{ki}^\pm+\gamma_k)}/u)\xi_{\La\pm\delta_{ki}},
$$
where $l_{ki}$, $s_{ki}^\pm$, $[\mathscr E_{\La,ki}^{\pm}]$ and $\zeta_{\La,k}(u)$ are defined in \eqref{eq:lki}, \eqref{eq:ski}, \eqref{eq:matrix-q} and \eqref{eq:l-weight-q}, respectively.
\end{thm}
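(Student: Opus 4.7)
The strategy is to mirror the proof of Theorem \ref{thm:main} verbatim, substituting Lemma \ref{lem:main-q} for Lemma \ref{lem:A-action}/Lemma \ref{lem:l-weight}/Corollary \ref{cor:thin}, Proposition \ref{prop:main-q} for Proposition \ref{prop:main}, and Lemma \ref{lem:zero-mode} for the evaluation map identification of the first coefficients $x_{k,1}^\pm$. The action of $d_k^+(u)$ is given by Lemma \ref{lem:main-q}, so only the formulas for $x_k^\pm(u)$ require work.

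First I would apply Proposition \ref{prop:main-q} to $V=\mathscr L(\la/\mu)$ with $\bm\mu=\bm\zeta_{\La}$ and $\bm\nu=\bm\zeta_{\La\pm\delta_{ki}}$. Since $\mathscr L(\la/\mu)$ is thin by Lemma \ref{lem:main-q}, each generalized $\ell$-weight space is at most one-dimensional, so only the case $k=l=1$ of part (2) is relevant and $P_{1,1}$ reduces to a scalar $C_{\La,ki}^\pm\in\C$. Consequently
\[
\bigl(x_k^{\pm}(u)\,\xi_{\La}\bigr)_{\bm\zeta_{\La\pm\delta_{ki}}}
\;=\; C_{\La,ki}^\pm\,\delta(a_{ki}^\pm/u)\,\xi_{\La\pm\delta_{ki}},
\]
for some $a_{ki}^\pm\in\C^\times$ determined by part (1).

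Next I would compute $a_{ki}^\pm$ by comparing the $k$-th component of $\bm\zeta_{\La\pm\delta_{ki}}=\bm\zeta_{\La}\,A_{k,a_{ki}^\pm}^{\pm 1}$ using the explicit rational function \eqref{eq:l-weight-q}. A direct calculation, analogous to the one following equation \eqref{eq:proofprop} in the proof of Theorem \ref{thm:main}, yields
\[
\frac{\zeta_{\La\pm\delta_{ki},k}(u)}{\zeta_{\La,k}(u)}
\;=\;(A_{k,a_{ki}^\pm}^{\pm 1})_k(u)
\;=\;\Bigl(\frac{u-a_{ki}^\pm}{q_k u-q_k^{-1}a_{ki}^\pm}\Bigr)^{\mp 1},
\]
so that solving for $a_{ki}^\pm$ gives $a_{ki}^\pm=q^{2(l_{ki}+s_{ki}^\pm+\gamma_k)}$; this splits into the four cases ($i\lle m'$ vs.\ $i>m'$, and $\pm$) exactly as in the Yangian proof.

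Finally I would pin down $C_{\La,ki}^\pm$ by extracting the constant mode. Since $\delta(a_{ki}^\pm/u)=\sum_{n\in\Z}(a_{ki}^\pm/u)^n$, the $u^0$-coefficient of $x_k^\pm(u)\xi_{\La}$ is $\sum_i C_{\La,ki}^\pm\,\xi_{\La\pm\delta_{ki}}$. By Lemma \ref{lem:zero-mode}, $x_{k,0}^\pm=(1-q_k^{\mp 2})e_k^\pm$, and under the embedding $\UqglMN\hookrightarrow\mathrm{U}_q(\gl_{m'|n})$ the action of $e_k^\pm$ on $\xi_{\La}$ is given by Theorem \ref{thm:q-GT-formulas} with the $q$-numbers $[\mathscr E_{\La,ki}^\pm]$ as in \eqref{eq:matrix-q}. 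Matching coefficients of $\xi_{\La\pm\delta_{ki}}$ forces $C_{\La,ki}^\pm=(1-q_k^{\mp 2})[\mathscr E_{\La,ki}^\pm]$, which finishes the proof.

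The main obstacle is the bookkeeping in the computation of $a_{ki}^\pm$: one must track carefully how the shift $\La\mapsto\La\pm\delta_{ki}$ modifies the factor indexed by $i$ in $\mathscr Y_{\La,k}$ versus $\mathscr Y_{\La,k-1}$, and in particular how the sign $s_k$ and the parity split at $i=m'$ conspire to produce the uniform exponent $l_{ki}+s_{ki}^\pm+\gamma_k$. Once that is checked, everything else is parallel to Section 3.2.
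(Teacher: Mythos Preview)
Your proposal is correct and follows essentially the same approach as the paper's proof: apply Proposition \ref{prop:main-q} together with the thinness from Lemma \ref{lem:main-q} to reduce to determining the parameter $a$ and the scalar $P_{1,1}$, then compute $a$ from the explicit $\ell$-weights and fix the scalar via the zero mode $x_{k,0}^\pm=(1-q_k^{\mp 2})e_k^\pm$ from Lemma \ref{lem:zero-mode} combined with Theorem \ref{thm:q-GT-formulas}. The paper's own proof is just a two-sentence sketch pointing to exactly these ingredients.
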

\begin{proof}
The proof is similar to that of Theorem \ref{thm:main}.	We briefly describe the strategy of proof. By Lemma \ref{lem:main-q} and Proposition \ref{prop:main-q}, it suffices to find the number $a$ appearing in Proposition \ref{prop:main-q} for each pair $\La$ and $\La\pm\delta_{ki}$ and the action of $x_{i,0}^\pm$ on $\xi_{\La}$. The number $a$ is computed by Lemma \ref{lem:main-q} while the action of $x_{i,0}^\pm$ on $\xi_{\La}$ is obtained from Theorem \ref{thm:q-GT-formulas} and Lemma \ref{lem:zero-mode}.
\end{proof}

\begin{thm}\label{thm:q-irr}
Every skew representation of $\UqglMNh$ is irreducible.	
\end{thm}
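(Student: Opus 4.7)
The plan is to follow the same strategy as the proof of Theorem \ref{thm:irr}, transferring each ingredient to the quantum setting.

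First, by Lemma \ref{lem:main-q}, the skew representation $\mathscr L(\la/\mu)$ is thin, so that different $\la/\mu$-admissible tableaux $\La'$ give rise to distinct $\ell$-weights $\bm\zeta_{\La'}$ and hence lie in distinct (one-dimensional) generalized $\ell$-weight spaces. This allows us to separate any linear combination of basis vectors by acting with suitable polynomials in the commuting coefficients $d_{j,a}^+$; concretely, the projection onto the $\bm\zeta_{\La'}$-weight space isolates $\xi_{\La'}$. Equivalently, since $\delta(a/z) = \sum_{n\in\Z} a^n z^{-n}$, the coefficient of $z^{-n}$ in $x_k^\pm(z)\xi_{\La}$ from Theorem \ref{thm:q-main} is a linear combination
\[
x_{k,n}^\pm \xi_{\La} = (1-q_k^{\mp 2})\sum_{i=1}^{k'} [\mathscr E_{\La,ki}^\pm]\, q^{2n(l_{ki}+s_{ki}^\pm+\gamma_k)}\, \xi_{\La\pm\delta_{ki}},
\]
where the arguments $l_{ki}+s_{ki}^\pm+\gamma_k$ are pairwise distinct for distinct admissible neighbors (again by thinness). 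A Vandermonde argument in $n$ then extracts each summand.

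Second, I need the quantum analog of Lemma \ref{lem:nonzero}: if both $\La$ and $\La\pm\delta_{ki}$ are $\la/\mu$-admissible, then $[\mathscr E_{\La,ki}^\pm]\ne 0$. Because $q$ is generic and $[n]=(q^n-q^{-n})/(q-q^{-1})$ vanishes only when $n=0$, replacing each integer factor in $\mathscr E_{\La,ki}^\pm$ by its $q$-number does not introduce new zeros. Hence the non-vanishing statement reduces to Lemma \ref{lem:nonzero}.

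Third, Lemma \ref{lem:adm-transform} is purely combinatorial and applies verbatim: any two $\la/\mu$-admissible GT tableaux are linked by a finite chain of admissible transformations $\La \mapsto \La\pm\delta_{ki}$.

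Combining these three ingredients, starting from an arbitrary $\xi_{\La}\in\mathscr L(\la/\mu)$ we obtain $\xi_{\La\pm\delta_{ki}}\in \UqglMNh\cdot\xi_{\La}$ whenever $\La\pm\delta_{ki}$ is admissible, and iterating along an admissible chain yields every basis vector $\xi_{\La'}$. Thus $\UqglMNh\cdot\xi_{\La}=\mathscr L(\la/\mu)$ for any $\xi_{\La}$, proving irreducibility. The main (minor) obstacle is just bookkeeping in Step 2 to verify that no extra coincidences among the $q$-numbers occur at generic $q$ beyond those already handled by the classical Lemma \ref{lem:nonzero}; the genericity of $q$ makes this essentially automatic, so the proof proceeds cleanly.
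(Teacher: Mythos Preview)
Your proposal is correct and follows exactly the route the paper indicates: the paper's proof of Theorem~\ref{thm:q-irr} simply says it is similar to that of Theorem~\ref{thm:irr} using Theorem~\ref{thm:q-main}, and you have faithfully unpacked that transfer (thinness via Lemma~\ref{lem:main-q}, the $q$-analogue of Lemma~\ref{lem:nonzero} from genericity of $q$, and the combinatorial Lemma~\ref{lem:adm-transform}).
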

\begin{proof}
The proof is similar to that of Theorem \ref{thm:irr} using Theorem \ref{thm:q-main}.
\end{proof}

An analogue of Theorem \ref{thm:thin-one} holds for $\mathrm U_q(\widehat{\gl}_{1|1})$ as well which we do not repeat.

\end{document}